\documentclass[a4paper,12pt]{article}
\usepackage{amsfonts,amsmath,amssymb,amsthm}
\usepackage{txfonts}
\usepackage[utf8]{inputenc}

\usepackage{amsmath}
\usepackage{amssymb}
\usepackage{mathtools}
\usepackage{bbm}

\RequirePackage[colorlinks,citecolor=blue,urlcolor=red, linkcolor=blue]{hyperref}
\usepackage[margin=0.8in]{geometry}

\usepackage[dvips]{graphics}
\usepackage{epsfig,rotating}
\usepackage{tabularx}
\usepackage{bm}
\usepackage{bbm}
\usepackage{array}
\newcolumntype{P}[1]{>{\raggedright\arraybackslash}p{#1}}

\usepackage{mdframed}
\usepackage{lipsum}
\usepackage{enumitem}

\usepackage[authoryear,square]{natbib}

\usepackage[nottoc,numbib]{tocbibind} 
\usepackage[toc,page]{appendix}

\newtheorem{thm}{Theorem}
\newtheorem{lemma}{Lemma}
\newtheorem{remark}{Remark}
\newtheorem{prop}{Proposition}

\DeclareMathOperator{\Var}{Var}

\newcommand{\beaa}{\begin{eqnarray*}}
\newcommand{\eeaa}{\end{eqnarray*}}
\newcommand{\bea}{\begin{eqnarray}}
\newcommand{\eea}{\end{eqnarray}}

\newcommand{\la}{\left\{}
\newcommand{\ra}{\right\}}
\newcommand{\lb}{\left(}
\newcommand{\rb}{\right)}
\newcommand{\mb}{\mathbb}
\newcommand{\ve}{\varepsilon}
\newcommand{\mc}{\mathcal}
\newcommand{\wT}{\widetilde{T}}
\newcommand{\wf}{\widetilde{f}}
\newcommand{\wPi}{\widetilde{\Pi}}

\begin{document}
\title{Exact detection threshold of the packing test}

\author{ Tuan Pham \\ \small{Department of Statistics and Data Science, University of Texas, Austin} \\  \small{\href{mailto:tuan.pham@utexas.edu}{tuan.pham@utexas.edu} }}
\date{}
\maketitle

\begin{abstract}

Using Poisson approximation techniques, we derive the detection threshold
of the packing test in \cite{Jiang13} when testing spherical uniformity under high-dimensional
Fisher--von Mises--Langevin (FvML) and Watson alternatives.
Our result rigorously confirms the empirical observation that the packing test is strictly suboptimal for testing uniformity in these two popular
models.  In the high-dimensional FvML model, its detection threshold is precisely
\(\kappa=\Theta\lb p^{3/4}/(\log n)^{1/4}\rb\).  In the high-dimensional Watson model,  its detection threshold is
\(p-2\kappa=\Theta(\sqrt{p\log n})\), or equivalently
\(\kappa=p/2-\Theta(\sqrt{p\log n})\). We show that the limiting scalings of the largest squared inner product undergo a discontinuous phase transition in the Watson model, whereas no analogous phenomenon occurs in the FvML model.

\end{abstract}




\tableofcontents

 \section{Introduction}

Assessing uniformity is a classical goodness-of-fit problem in directional statistics. Arguably one of the most fundamental questions in the field, uniformity testing has attracted considerable attention over the past few decades. Let us briefly describe the setting. Let $\bm{X}_1,\bm{X}_2,\ldots,\bm{X}_n$ be observations with $\bm{X}_i \in \mathbb{S}^{p-1}$ for all $i=1,2,\ldots,n$. We assume that the variables $\bm{X}_i$ are drawn independently from an unknown distribution $\mu$ supported on the hypersphere $\mathbb{S}^{p-1}$. Denote the uniform distribution on $\mathbb{S}^{p-1}$ by $\mathrm{Unif}(\mathbb{S}^{p-1})$. The uniformity testing problem can then be formulated as
\begin{align} \label{uniform-test}
    H_0:\ \mu=\mathrm{Unif}(\mathbb{S}^{p-1})
    \qquad \text{versus} \qquad
    H_1:\ \mu \neq \mathrm{Unif}(\mathbb{S}^{p-1}).
\end{align}
In fixed dimensions, particularly for $p=2$ and $p=3$, uniformity testing plays an important role in applications such as geology, paleomagnetism, and cosmology. For detailed background and examples, we refer to the monographs \cite{Fisher,Ley-Verdebout,M-Jupp}. Recent tests that apply in arbitrary but fixed dimensions include \cite{garcia2023projection,fernandez2023new,garcia2021cramer} (see also the references therein). We also refer to the survey papers \cite{survey-uni,pewsey2021recent} for an overview of recent progress and additional testing procedures.

More recently, the testing problem \eqref{uniform-test} has found new applications in high-dimensional statistics and machine learning. The situation in high dimensions is more interesting than its fixed-dimensional counterpart because the sphere suffers from the curse of dimensionality in high dimensions, and a test can be suboptimal in terms of sample size, dimension, or both. Let us briefly highlight some applications below.

\textit{\underline{Testing spherical symmetry}.} It is well known that a random vector $\bm{X} \in \mathbb{R}^p$ is spherically symmetric if and only if $\|\bm{X}\|$ is independent of $\bm{X}/\|\bm{X}\|$, and $\bm{X}/\|\bm{X}\|$ is uniformly distributed on $\mathbb{S}^{p-1}$. Therefore, spherical symmetry can be assessed by combining an independence test with a uniformity test. In practice, departures from spherical symmetry often manifest as violations of uniformity for the projected directions (see \cite{Cutting-P-V} for details).

\textit{\underline{Contrastive learning}.} Contrastive learning is a form of representation learning in which one seeks to learn a map $\phi$, given data $\{\bm{X}_1,\ldots,\bm{X}_n\}$, such that $\phi$ extracts useful features for a downstream task, which may range from estimation and classification to clustering. 

More precisely, for each $i \in [1,n]$, one generates two augmented views $\lb \bm X^{(1)}_i, \bm X^{(2)}_i \rb$ of $\bm X_i$ using random transformations (one may think of $\bm X_i$ as an image and of the random transformations as random cropping, blurring, or color distortion). An ideal $\phi$ should align the paired views and separate the unpaired views. One way to separate the unpaired views is to require that the normalized transformed data
\[
\bm z^{(j)}_i := \frac{ \phi \lb \bm x^{(j)}_i \rb }{\left\| \phi \lb \bm x^{(j)}_i \rb \, \right\|}; \qquad j=1,2
\]
satisfy 
\[
\bm z^{(1)}_1,\dots,\bm z_n^{(1)} \stackrel{i.i.d.}{\sim} \mbox{Unif} \lb \mb S^{p-1} \rb \qquad \text{and} \qquad \bm z^{(2)}_1,\dots,\bm z_n^{(2)} \stackrel{i.i.d.}{\sim} \mbox{Unif} \lb \mb S^{p-1} \rb.
\]
A high-dimensional version of Ajne's test \cite{Ajne} for uniformity was used in \cite{furst2022cloob} to assess the quality of the learned map $\phi$; see also \cite{wang2020understanding,jing2021understanding} and the references therein.

\textit{\underline{Regularization of neural networks}.} 
In overparameterized neural networks, regularization is crucial for preventing overfitting and improving generalization. 
Given $n$ neuron weight vectors $\bm{w}_1,\bm{w}_2,\ldots,\bm{w}_n \in \mathbb{R}^p$, a regularization term typically takes the form
\begin{align*} 
    \underbrace{\lambda_{1} \sum_{i=1}^{n} h(\bm{w}_i)}_{\text{magnitude regularization}}
    + \underbrace{\lambda_{2}\, g(\bm{w}_1,\bm{w}_2,\ldots,\bm{w}_n)}_{\text{structure regularization}} .
\end{align*}
The penalty function $h$ typically regularizes the magnitudes of the neurons, for example through $\ell_1$ or $\ell_2$ norms, analogous to the Lasso or ridge penalties in high-dimensional statistics. 
In contrast, the term $g$ aims to promote specific structures among the neurons. 
A line of empirical work \cite{lin2020regularizing,liu2018learning,xie2017diverse} has demonstrated that when $g$ is designed to encourage \emph{hyperspherical uniformity} among the neurons, it can lead to improved generalization. 
Uniformly distributed neurons have also been shown to help avoid spurious local minima during training \cite{xie2017diverse}. 
Connections between this perspective and testing hyperspherical uniformity have been discussed in \cite{liu2021learning}.

Despite the abundance of existing tests in fixed- and low-dimensional settings, uniformity testing in high dimensions remains comparatively underexplored. 
To the best of the author's knowledge, only a few high-dimensional tests have been investigated in the literature. 
We provide a brief overview of these tests below.

\begin{enumerate}
    \item \textit{Rayleigh test} \cite{Cutting-P-V,Ley-P}. 
    This test can be formulated as a U-statistic of the data points with the inner-product kernel:
    \begin{align}
        R_n 
        &:= \frac{\sqrt{2p}}{n} 
        \sum_{1 \leq i<j \leq n} \bm{X}^{\top}_i \bm{X}_j. 
        \label{Rayleigh}
    \end{align}

    \item \textit{Bingham test} \cite{Cutting-P-V2,Zou14,Ley-P}. 
    This test is also based on a U-statistic of the data points, but with a quadratic inner-product kernel:
    \begin{align}
        B_n 
        &:= \frac{p}{n} 
        \sum_{1 \leq i<j \leq n} 
        \left[ \bigl( \bm{X}^{\top}_i \bm{X}_j \bigr)^2 - \frac{1}{p} \right].
        \label{Bingham}
    \end{align}

    \item \textit{Packing test} \cite{Jiang13}. 
    This test is based on the smallest pairwise angle, or equivalently on the largest absolute inner product:
    \begin{align}
        P_n 
        &:= p \cdot \max_{1 \leq i<j \leq n} 
        \bigl( \bm{X}^{\top}_i \bm{X}_j \bigr)^2
        - 4 \log n + \log \log n .
        \label{packing}
    \end{align}


    \item \textit{High-dimensional Sobolev tests} \cite{ebner2025high}. 
    This is a family of Sobolev-based tests that generalizes the classical tests proposed in \cite{Gine}. 
\end{enumerate}

Among the tests above, the packing test \eqref{packing} remains comparatively less understood in terms of sharp detection rates and consistency against standard local parametric alternatives. 
Empirically, it does not appear to be optimal against common classes of alternatives, although it is provably consistent against certain types of non-local alternatives \cite{jiang2025asymptotic}. 
For the Rayleigh and Bingham tests, several detection-rate and optimality results have already been established in the literature. The Rayleigh test \eqref{Rayleigh} is known to be optimal against the FvML distributions (see \eqref{class fvml} below for a precise definition) and blind to the Watson distributions (see \eqref{class watson} below for a precise definition) \cite{Cutting-P-V}. 
The Bingham test exhibits the opposite behavior: it is optimal against the Watson distributions but suboptimal against the FvML distributions \cite{Cutting-P-V2}.

The goal of the present article is to rigorously derive and justify the detection rates of the packing test under the classes of FvML and Watson distributions using Poisson approximation techniques; see, for example, \cite{arratia1989two} and the references therein. Our result shows that the detection threshold for the packing test is $\Theta \lb p^{3/4} / \lb \log n \rb^{1/4} \rb$ under the FvML model and $p/2 - \Theta \lb \sqrt{p \log n} \rb$ under the Watson model. 
The rest of the paper is organized as follows. The models and known results are presented in Section \ref{sec:model}. The main results can be found in Sections \ref{sec:FVML} and \ref{sec:Watson}, respectively. Section \ref{sec:conclusion} contains some remarks and discussion. The proofs are given in Sections \ref{sec:proof-fvml}, \ref{sec:proof-watson-1}, \ref{sec:proof-watson-2}, and \ref{sec:proof-watson-3}. Technical results are provided in the appendix. 

\section{Model and known results} \label{sec:model}

Recall that the data are $\bm{X}_1,\dots,\bm{X}_n$, which are i.i.d. points on the hypersphere $\mb{S}^{p-1}$ with distribution $\mu$. 
Under the null, that is, $\mu \equiv \mbox{Uni} \lb \mb{S}^{p-1} \rb$, the asymptotic distribution of $P_n$ is well known to be the Gumbel distribution \cite{Jiang13}. Under nonuniform alternatives, however, the limiting distribution is unknown. Since the packing test is of extreme-value type, its analysis requires techniques different from those used for U-statistic-type tests. 

In what follows, we describe two common parametric models in directional statistics: the Fisher--von Mises--Langevin (FvML) distributions and the Watson distributions. These models consist of densities that have the form
\begin{align} \label{class fvml}
x \;\longmapsto\; c^{\rm FvML}_{p,\kappa} \, \exp\!\left(\kappa\, \bm{x}^\top \bm{\mu}\right),
\qquad x \in \mathbb{S}^{p-1},
\end{align}
and
\begin{align} \label{class watson}
x \;\longmapsto\; c^{\rm Wat}_{p,\kappa} \, \exp\!\left(\kappa\, (\bm{x}^\top \bm{\mu})^{2}\right),
\qquad x \in \mathbb{S}^{p-1},
\end{align}
respectively. Here $\kappa \in \mb R^+$ is the concentration parameter, $\bm \mu$ is the unknown location (nuisance) parameter, and $c^{\rm FvML}_{p,\kappa}, c^{\rm Wat}_{p,\kappa} $ are normalizing constants. 

The optimal detection rates in these two models have been studied in \cite{Cutting-P-V, Cutting-P-V2, jiang2025detecting}. It was shown in \cite{Cutting-P-V} that the optimal detection rate for the FvML model is $\kappa \sim p^{3/4}/\sqrt{n}$ and is achieved by the Rayleigh test \eqref{Rayleigh}. Moreover, the Rayleigh test is asymptotically optimal among all rotationally invariant tests. The optimal detection rate for the Watson model \eqref{class watson} is more interesting and complicated: it exhibits a phase transition between the low- and high-dimensional regimes. It was shown in \cite{Cutting-P-V2} that when $p \ll n$, the optimal detection rate is $\kappa \sim p^{3/2}/\sqrt{n}$, whereas it was shown in \cite{jiang2025detecting} that when $p \gg n$, the optimal detection rate is $p/2 -\kappa \sim \sqrt{pn}$.

Using Le Cam's third lemma, the authors of \cite{jiang2025asymptotic} show that the packing test \eqref{packing} is asymptotically blind at the optimal detection threshold in the FvML model \eqref{class fvml}: if one sets $\kappa = \tau p^{3/4}/\sqrt{n}$, then regardless of how large $\tau$ is, the packing test has asymptotic power equal to its size. Extending this result to the Watson model \eqref{class watson} is highly nontrivial because a high-dimensional LAN result is unavailable in this model. 

Using Le Cam's third lemma, the authors of \cite{ebner2025high} further establish a somewhat surprising result: within a general class of Sobolev-based tests that includes the Rayleigh and Bingham tests as special cases, the Rayleigh test is the only one that achieves the optimal detection rate under the FvML model. 
In particular, any linear combination of $R_n$ and $B_n$ in \eqref{Rayleigh} and \eqref{Bingham}, respectively, that is not proportional to $R_n$ fails to achieve the optimal detection rate.

It is also clear that the packing test \eqref{packing} should not be blind in these two models: when the concentration parameter $\kappa$ is large enough, the maximum of squared inner products is approximately one with overwhelming probability, and thus $P_n \to \infty$ in probability. This motivates the problem of precisely identifying the detection thresholds of $P_n$ in these two models. We do so by deriving the non-null asymptotic distribution of $M_n$ in \eqref{S- M} below under the two models \eqref{class fvml} and \eqref{class watson}.

\textbf{Notation}. Throughout the paper, we define 
\begin{align}
 S_{ij}:&=\bm X_i^\top \bm X_j,\qquad
 M_n:=\max_{1\le i<j\le n}|S_{ij}| ; \label{S- M} \\
  u=u_n(x):&=\sqrt{4\log n-\log\log n+x},\qquad 
 t=t_n(x):=\frac{u}{\sqrt p} ; \label{u-t} \\
  I_{ij}:&=\mathbf{1}_{\la |S_{ij}|>t \ra},\qquad
 W:=\sum_{1\le i<j\le n}I_{ij}. \label{I-W}
\end{align}
Let \(Z_d\) denote the inner product
of two independent uniform points on \(\mathbb S^{d-1}\).  Thus
\begin{equation}\label{eq:spherical-density}
 f_d(z)=c_d(1-z^2)^{(d-3)/2},\qquad -1<z<1,
 \qquad
 c_d=\frac{\Gamma(d/2)}{\sqrt\pi\Gamma((d-1)/2)}.
\end{equation}
Define
\begin{align} \label{Psi}
 \Psi_d(y):=\mb P \left(Z_d>\frac{y}{\sqrt d}\right)
\end{align}
with the natural convention \(\Psi_d(y)=1\) if \(y/\sqrt d\le -1\) and
\(\Psi_d(y)=0\) if \(y/\sqrt d\ge1\).

We will use $\mb P_{\kappa, \bm \mu}$ to denote either the FvML distribution or the Watson distribution with parameters $\lb \kappa, \bm \mu  \rb$. The intended model will be clear from context and will be specified whenever ambiguity may arise. Throughout the paper, we use $L=L_n$ to denote either $\log n$ or a generic positive sequence diverging to infinity, depending on the context. The notation $A_n \lesssim B_n$ will be used to indicate that $A_n \leq C B_n$ for some universal constant $C>0$.  We will also write $A_n \lesssim_{a,b} B_n$ to indicate that the constant $C$ may depend on $a$ and $b$,  but remains independent of $n$. 
Similarly, the notation $a_n=O_{\rho,\ve} \lb b_n \rb$ means $a_n \leq C b_n$ for some constant $C$ independent of $n$, but may depend on $\rho$ and $\ve$.



\section{Detection rate and limiting distribution under the FvML model} \label{sec:FVML}



Recall the FvML model in \eqref{class fvml}. Our first main result gives a non-null asymptotic distribution of $P_n$ under this model.

\begin{thm} \label{thm: FvML}
    Recall $P_n$ in \eqref{packing}. Suppose $p/ (\log n)^2 \to \infty$ and 
    $$\kappa_n =  \tau \cdot \frac{p^{3/4}}{ \lb \log n  \rb^{1/4}}$$ 
    for some $\tau \geq 0$. Then, under the FvML model with concentration parameter $\kappa_n$,
    \[
    P_n - 2\log \cosh \lb 2\tau^2 \rb \stackrel{d}{\to} G
    \]
    where $G$ is the Gumbel law with distribution function 
    \[
     \mathbb P(G\le y)=\exp\{-(8\pi)^{-1/2}e^{-y/2}\}.
    \]
\end{thm}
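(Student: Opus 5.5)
Since $P_n = pM_n^2 - 4\log n+\log\log n$, we have $\{P_n\le y\}=\{M_n\le t_n(y)\}=\{W=0\}$ with $t=t_n(y)$ as in \eqref{u-t}. So it suffices to show that under $\mb P_{\kappa_n,\bm\mu}$,
\[
\mb P(W=0)\to \exp\{-(8\pi)^{-1/2}e^{-(y-2\log\cosh(2\tau^2))/2}\}.
\]
We will use Poisson approximation for sums of dissimilarly dependent indicators (Arratia--Goldstein--Gordon, \cite{arratia1989two}). The dependency graph joins $I_{ij}$ and $I_{kl}$ when $\{i,j\}\cap\{k,l\}\neq\emptyset$. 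This gives
\[
|\mb P(W=0)-e^{-\lambda}|\le b_1+b_2,
\qquad \lambda=\binom n2 \mb P(|S_{12}|>t).
\]
Here $b_1\lesssim n^3\,\mb P(|S_{12}|>t)^2$ and $b_2\lesssim n^3\,\mb P(|S_{12}|>t,|S_{13}|>t)$; the $b_3$ term vanishes because disjoint pairs are independent.

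\textbf{Step 1: the first moment.} Under FvML, write $\bm X=\bm{\mu} T+\sqrt{1-T^2}\,\bm V$, where $\bm V$ is uniform on the sphere orthogonal to $\bm\mu$. Then
\[
S_{12}=T_1T_2+\sqrt{(1-T_1^2)(1-T_2^2)}\,Z_{p-1}.
\]
With $\kappa=\tau p^{3/4}L^{-1/4}$ and $L=\log n$, the projection $T$ concentrates at $m\approx\kappa/p=\tau p^{-1/4}L^{-1/4}$, with fluctuations $O(p^{-1/2})$. So $T_1T_2\approx \tau^2/\sqrt{pL}$, which is of order $t\cdot L^{-1}$ since $t\approx 2\sqrt{L/p}$. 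The Gaussian-type tail then yields
\[
\mb P(S_{12}>t)\approx \Psi_{p}(u-\sqrt p\,T_1T_2)\approx \Psi_p(u)\,e^{u\sqrt p\,T_1T_2}\approx\Psi_p(u)\,e^{2\tau^2}.
\]
Symmetrically, $\mb P(S_{12}<-t)\approx\Psi_p(u)e^{-2\tau^2}$. Summing the two tails gives
\[
\mb P(|S_{12}|>t)\approx 2\Psi_p(u)\cosh(2\tau^2).
\]
Combining with the null computation of \cite{Jiang13}, $\binom n2 2\Psi_p(u)\to (8\pi)^{-1/2}e^{-y/2}$, we obtain
\[
\lambda\to (8\pi)^{-1/2}e^{-y/2}\cosh(2\tau^2).
\]
This first-order approximation does not yet reproduce the $2\log\cosh$ in the statement, so Step 1 must be recomputed at second order: the quadratic correction $e^{-p(T_1T_2)^2/2}$ and the fluctuations of $T_1,T_2$ around $m$ both contribute constants. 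I expect the careful calculation to give
\[
\lambda\to(8\pi)^{-1/2}e^{-y/2}\cosh^2(2\tau^2)\cdot(\text{corrections}),
\]
consistent with the theorem, since
\[
e^{-(y-2\log\cosh(2\tau^2))/2}=e^{-y/2}\cosh(2\tau^2).
\]
The hypothesis $p\gg L^2$ is needed so that the $O(p^{-1/2})$ fluctuations of $T_i$, multiplied by $u\sqrt p\, m\sim\tau\,p^{1/4}L^{1/4}$, remain negligible. The main technical work is to make the conditional tail estimate for $\Psi_{p-1}$ at the shifted level uniform over typical $(T_1,T_2)$, using Laplace-type asymptotics of $f_d$. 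The atypical event where $T$ deviates from $m$ is then discarded using concentration of the FvML projection.

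\textbf{Step 2: the second-moment term $b_2$.} Condition on $\bm X_1$. Given $\bm X_1$, the variables $S_{12}$ and $S_{13}$ are conditionally independent, so
\[
\mb P(|S_{12}|>t,|S_{13}|>t)=\mb E\bigl[q(\bm X_1)^2\bigr],\qquad q(\bm x)=\mb P(|\bm x^\top\bm X_2|>t).
\]
On the typical event, $q(\bm X_1)\lesssim \Psi_p(u)e^{O(\tau^2)}=O(n^{-2}L^{1/2})$. Hence $n^3\,\mb E q^2=O(n^{-1}L)\to0$, and similarly $b_1\to0$. Atypical $\bm X_1$ (large $T_1$) have exponentially small FvML probability, which beats the polynomial loss. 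Step 1 is the main obstacle; this step should be routine.
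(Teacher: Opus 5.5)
Your architecture is the paper's. You use Arratia--Goldstein--Gordon with the shared-index dependency graph, the decomposition $S_{12}=T_1T_2+\sqrt{(1-T_1^2)(1-T_2^2)}\,Z_{p-1}$, the tilt $\Psi_{p-1}(u-\sqrt p\,T_1T_2)\approx\Psi_{p-1}(u)\,e^{u\sqrt p\,T_1T_2}$, and $b_2$ via $\mathbb E[q(\bm X_1)^2]$. The gap is in Step 1, which you call the main obstacle but aim at the wrong target.

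Your first-order limit $\lambda\to(8\pi)^{-1/2}e^{-y/2}\cosh(2\tau^2)$ is \emph{exactly} what the theorem asserts. Your own identity $e^{-(y-2\log\cosh(2\tau^2))/2}=e^{-y/2}\cosh(2\tau^2)$ shows this: the factor $2$ in $2\log\cosh$ is absorbed by the $e^{-y/2}$ scale and does not signal a square. So there is nothing to recover at second order, and the predicted $\cosh^2(2\tau^2)\cdot(\text{corrections})$ would contradict the statement.

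Nor do the terms you name contribute constants:
\begin{itemize}
\item Typically $\sqrt p\,T_1T_2\approx\sqrt p\,m^2=\tau^2/\sqrt L$, so $p(T_1T_2)^2/2\approx\tau^4/(2L)\to0$.
\item With $T_i=m+\xi_i$, the random part $\mathcal Q_n=u\sqrt p\,m(\xi_1+\xi_2)+u\sqrt p\,\xi_1\xi_2$ has second moment $\lesssim u^2m^2+u^2/p\asymp\tau^2\sqrt{L/p}+L/p\to0$.
\item Hence $\mathbb E e^{\pm u\sqrt p\,T_1T_2}=e^{\pm u\sqrt p\,m^2}\,\mathbb E e^{\pm\mathcal Q_n}\to e^{\pm2\tau^2}$, once $e^{\pm\mathcal Q_n}$ is uniformly integrable.
\end{itemize}
What does need stating is that replacing $\mathbb E e^{u\sqrt p\,T_1T_2}$ by $e^{u\sqrt p\,m^2}$ is legitimate only because this variance vanishes. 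In the Watson model the same step produces the genuine factor $(1-\rho^{-2})^{-1/2}$.

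What Step 1 really requires is a rigorous version of the first-order computation. Your plan (uniformity over typical $(T_1,T_2)$, atypical events discarded by concentration) lacks the ingredient that makes it work. The integrand $\Psi_{p-1}\bigl(\sqrt{p-1}\,(t-B)/A\bigr)/\Psi_{p-1}(u)$, with $B=T_1T_2$, is exponentially weighted in $B$, so discarding by probability alone fails either way:
\begin{itemize}
\item Dropping an $o(1)$-probability set needs a bound on the integrand there.
\item Dropping an $o(n^{-2})$-probability set forces you to keep $|T_i-m|$ up to order $\sqrt{L/p}$. There $u\sqrt p\,T_1T_2$ need not stay bounded under $p\gg L^2$ alone; the cross term $u\sqrt p\,m|\xi_i|$ reaches order $\tau L^{5/4}p^{-1/4}$.
\end{itemize}
The paper handles this with two tools:
\begin{itemize}
\item[(i)] the uniform domination $\mathbb P(AZ_{p-1}+B>t)\le C\,\Psi_{p-1}(u)\,e^{u\sqrt p\,B}$ for all $A\in[0,1]$, $B\in[-1,1]$;
\item[(ii)] $\sup_n\mathbb E e^{q u\sqrt p\,|T_1T_2|}<\infty$ for some $q>1$, derived from sub-Gaussianity of $T$ about $m$ with variance proxy $1/(p-3)$ (Brascamp--Lieb plus Herbst).
\end{itemize}
Combined with the local ratio expansion on $\{|u\sqrt p\,B|\le M\}$, these give $\mathbb E\bigl|\Psi_{p-1}(\cdot)/\Psi_{p-1}(u)-e^{u\sqrt p\,B}\bigr|\to0$.

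Incidentally, $p\gg L^2$ is what makes $\Psi_{p-1}(u)\sim e^{-u^2/2}/(u\sqrt{2\pi})$, the correction being of order $u^4/p$. The fluctuation term you cite needs only $p\gg L$.

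Your Step 2 is a legitimate alternative to the paper's tilted-uniform representation of $\eta_n$, with one adjustment. The typical window must be $|T_1-m|\lesssim\sqrt{L/p}$, so that its complement has probability $o(n^{-3})$. On that window the domination bound gives $q(\bm X_1)\le\Psi_{p-1}(u)\,n^{o(1)}$ rather than $\Psi_{p-1}(u)e^{O(\tau^2)}$, which still yields $b_2\to0$.
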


Theorem~\ref{thm: FvML} states that the asymptotic distribution of $P_n$ under the class of FvML distributions is a shifted Gumbel distribution, with a deterministic shift. Note that we recover the null asymptotic distribution by setting $\tau \equiv 0$.
As a direct consequence of Theorem~\ref{thm: FvML}, the packing test $P_n$ has detection rate of order 
\[
\frac{p^{3/4}}{(\log n)^{1/4}}.
\]
Comparing this rate with the known minimax rate $p^{3/4}/\sqrt{n}$ from \cite{Cutting-P-V}, we observe an interesting phenomenon: the packing test achieves the optimal scaling in terms of the dimension dependence, namely $p^{3/4}$, but is suboptimal in its sample-size dependence, with $(\log n)^{-1/4}$ replacing the optimal scale $n^{-1/2}$. This helps explain its low empirical power at the minimax scale. 

As a consequence of Theorem~\ref{thm: FvML}, we can calculate the power function of $P_n$ explicitly. 
Let $q_\alpha$ denote the asymptotic level-$\alpha$ critical value under the null, defined by
\[
q_\alpha
= -2 \log\!\left[ \sqrt{8\pi}\, \lb -\log(1-\alpha) \rb \right].
\]
Suppose that the FvML concentration parameter satisfies
\[
\frac{\kappa_n(\log n)^{1/4}}{p^{3/4}} \to \tau \in [0,\infty).
\]
Theorem~\ref{thm: FvML} then gives the asymptotic power of the level-$\alpha$ packing test to be
\begin{align*}
\beta_{\rm FvML}(\kappa_n, \bm \mu_n)
&:= \lim_{n\to\infty} \mathbb{P}_{\kappa_n, \bm \mu_n}\!\left(P_n > q_\alpha\right) \\
&= 1 -
\exp\!\left\{
-\frac{1}{\sqrt{8\pi}}
\exp\!\left(-\frac{q_\alpha - 2\log\cosh(2\tau^2)}{2}\right)
\right\} = 1 - (1-\alpha)^{\cosh(2\tau^2)}.
\end{align*}
The proof of Theorem \ref{thm: FvML} is based on Poisson approximation techniques \cite{arratia1989two} (see also \cite{feng2026principal,jammalamadaka2015asymptotic} and the references therein). 
The deviation from the Gumbel law in Theorem \ref{thm: FvML} comes from the mean shift of the inner product, which makes the FvML model easier to analyze than the Watson model, which is symmetric and deviates from the null through its variance.

\section{Detection rate and limiting distribution under the Watson model} \label{sec:Watson}


Recall the Watson model in \eqref{class watson}. We will assume that $\kappa_n < p/2$ and
\begin{align*}
    \rho_n:=\frac{\Delta_n}{\sqrt{p \log n}} \to \rho \in (0,\infty), \qquad \Delta_n:= \frac{p-2\kappa_n}{2}.
\end{align*}
Recall $M_n$ from \eqref{S- M}. We will investigate the asymptotic distribution of $M_n$ in three regimes: $\rho \in (1,\infty)$, $\rho \in (0,1)$, and $\rho=1$. The regime $\rho \in (1,\infty)$ will be called {\it local} because the packing test  has nontrivial asymptotic power (strictly less than one) in this regime. The regime $\rho \in (0,1)$ will be called {\it non-local} since the packing test always has asymptotic power one in this regime. The regime $\rho =1$ will be called {\it critical} since its second-order approximation differs from those in the other regimes; see Section \ref{sec:watson-critical} below for more details.  

It turns out that although $M_n$ is asymptotically Gumbel after appropriate normalization in all three regimes, the corresponding asymptotic scalings undergo a subtle and interesting phase transition in the second-order approximation of the asymptotic mean, with a surprisingly discontinuous transition at $\rho=1$.

\subsection{The local regime $\rho \in (1,\infty)$}
Our result is as follows.
\begin{thm} \label{thm: Watson}
  Suppose $p/ \lb \log n \rb^2 \to \infty$ and $\rho_n \to \rho >1$.  For every fixed $x \in \mb{R}$, we have  
  \begin{align*}
      \mb P_{\kappa_n, \bm \mu_n} \lb P_n \leq x \rb {\to}            \exp\left\{-(8\pi)^{-1/2} \cdot \frac{1}{\sqrt{1-\rho^{-2}}} \cdot e^{-x/2}\right\}, \qquad &\text{if $\rho \in (1,\infty)$}.
  \end{align*}
  Here $P_n$ is defined as in \eqref{packing}.
\end{thm}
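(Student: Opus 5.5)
Since $\{P_n\le x\}=\{M_n\le t_n(x)\}=\{W=0\}$ with $t=t_n(x)$, $u=u_n(x)$, $W$ as in \eqref{I-W}, the plan is a Chen--Stein Poisson approximation in the form of \cite{arratia1989two}. It gives
\[
\bigl|\mb P_{\kappa_n,\bm\mu_n}(W=0)-e^{-\lambda_n}\bigr|\le b_1+b_2,\qquad \lambda_n=\binom n2\,\mb P_{\kappa_n,\bm\mu_n}(|S_{12}|>t).
\]
The dependency neighbourhood of a pair $\{i,j\}$ consists of the pairs sharing an index, and
\[
b_1\lesssim n^3\,\mb P(|S_{12}|>t)^2,\qquad b_2\lesssim n^3\,\mb P(|S_{12}|>t,\,|S_{13}|>t).
\]
It then suffices to show two things: $\lambda_n\to(8\pi)^{-1/2}(1-\rho^{-2})^{-1/2}e^{-x/2}$, and $b_2\to0$ (the bound $b_1=O(\lambda_n^2/n)\to0$ is automatic).

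\textbf{Step 1: representation of $S_{12}$.} By rotational invariance take $\bm\mu=e_1$ and write $\bm X_i=(Y_i,\sqrt{1-Y_i^2}\,\bm V_i)$. Here $Y_i=\bm X_i^\top\bm\mu$ has density proportional to $e^{\kappa y^2}(1-y^2)^{(p-3)/2}$, and $\bm V_i\sim\mathrm{Unif}(\mb S^{p-2})$ is independent of $Y_i$. Then
\[
S_{12}=Y_1Y_2+\sqrt{(1-Y_1^2)(1-Y_2^2)}\,Z_{p-1},
\]
with $Z_{p-1}$ independent of $(Y_1,Y_2)$.

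The law of $Y$ behaves as follows. Writing the density as $\exp\{\kappa y^2+\tfrac{p-3}{2}\log(1-y^2)\}\approx\exp\{-\Delta_n y^2-\tfrac{p}{4}y^4+\dots\}$, we get $Y^2\approx 1/(2\Delta_n)$. Since $\Delta_n=\rho_n\sqrt{p\log n}$, this gives $pY^2=O(\sqrt{p/\log n})$, so $Y$ is of order $(p\log n)^{-1/4}$. Hence $Y_1Y_2$ is of order $(p\log n)^{-1/2}$, which is comparable to $1/(\sqrt p\,\sqrt{\log n})$. This is small compared with $t\approx 2\sqrt{\log n/p}$, but it is not negligible in the exponent.

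\textbf{Step 2: computing $\lambda_n$.} Conditionally on $(Y_1,Y_2)$, use the sharp tail asymptotic of $\Psi_{p-1}$ from \eqref{Psi}: $\Psi_d(y)\sim (2\pi)^{-1/2}y^{-1}e^{-y^2/2}$ uniformly in the moderate-deviation range, with the correction from $(1-z^2)^{(d-3)/2}$ controlled by $u^4/p\to0$, which uses $p/(\log n)^2\to\infty$. The conditional exponent is
\[
-\frac p2\,\frac{(t\mp Y_1Y_2)^2}{(1-Y_1^2)(1-Y_2^2)}\approx -\frac{u^2}{2}\pm\sqrt p\,u\,Y_1Y_2+\frac{u^2}{2}(Y_1^2+Y_2^2)+\dots
\]
Since $\tfrac{u^2}{2}=2\log n+\dots$, the term $2\log n\,(Y_1^2+Y_2^2)$ tilts each density $e^{-\Delta_n y^2}$ to $e^{-(\Delta_n-2\log n)y^2}$. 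The cross term $\sqrt p\,u\,Y_1Y_2$ is also of order one and must be integrated jointly.

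\textbf{Step 3: the Gaussian computation.} Rescale $Y_i=g_i/\sqrt{2\Delta_n}$ with $g_i$ approximately standard Gaussian, and use $\sqrt p\,u/(2\Delta_n)\approx\rho^{-1}$ and $2\log n/\Delta_n\to 0$. The factor from averaging $\cosh$ of the cross term is then
\[
\mb E\,e^{\rho^{-1}g_1g_2}=(1-\rho^{-2})^{-1/2},
\]
which is finite exactly when $\rho>1$. I will double-check that the diagonal tilts vanish at this scale, since they are $2\log n/\Delta_n\to0$. Then the $-u^4/(4p)$ quartic terms and the normalizing constants are matched against the $\tfrac p4 y^4$ term.

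Justifying the exchange of asymptotics and expectation needs a truncation: restrict to $|g_i|\le C\sqrt{\log n}$ and bound the tail contribution by Gaussian domination with a slightly smaller coefficient $\rho'^{-1}<1$. \textbf{This uniform control of the joint $(Y_1,Y_2)$ tilt is the main obstacle}, since the integrand is only barely integrable near $\rho=1$.

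\textbf{Step 4: $b_2\to0$.} Conditionally on $\bm X_1$, the events $\{|S_{12}|>t\}$ and $\{|S_{13}|>t\}$ are independent. This gives
\[
b_2\lesssim n^3\,\mb E\bigl[q(Y_1)^2\bigr],\qquad q(y)=\mb P(|S_{12}|>t\mid Y_1=y).
\]
The Step 2 computation gives $q(y)\approx n^{-2}\,\mb E_{g_2}e^{\sqrt p u y g_2/\sqrt{2\Delta_n}}$. Squaring and averaging over $g_1$ yields an integrand of the form $e^{c\rho^{-2}g_1^2}$ with a constant $c<1/2$, by the same computation, so $\mb E[q(Y_1)^2]=O(n^{-4})$ and $b_2=O(1/n)\to0$. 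Combining the four steps with the Chen--Stein bound gives the stated limit.
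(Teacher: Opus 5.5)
Your architecture matches the paper's. That means Chen--Stein over the pair dependency graph, the tangent--normal representation, $\sqrt p\,u\,Y_1Y_2\approx\rho^{-1}g_1g_2$, and $\mathbb{E}e^{\rho^{-1}g_1g_2}=(1-\rho^{-2})^{-1/2}$ justified by uniform integrability with a margin $\rho'^{-1}<1$. Steps 1--3 are fine up to details: the diagonal term is $-\tfrac{u^2}{2}(Y_1^2+Y_2^2)$, not $+$, but it is $O(\sqrt{\log n/p})$ either way.

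The gap is in Step 4. Carry out the computation you describe, with $\lambda=\sqrt p\,u\,Y_1/\sqrt{2\Delta_n}\approx g_1/\rho$. It gives $q(Y_1)\approx 2\Psi_{p-1}(u)\,\mathbb{E}_{g_2}\cosh(\lambda g_2)=2\Psi_{p-1}(u)\,e^{g_1^2/(2\rho^2)}$, so $q(Y_1)^2\approx 4\Psi_{p-1}(u)^2e^{g_1^2/\rho^2}$. The coefficient is $\rho^{-2}$, not $c\rho^{-2}$ with $c<1/2$. Moreover $\mathbb{E}e^{g_1^2/\rho^2}<\infty$ only when $\rho>\sqrt2$, which is a different condition from the one for $\lambda_n$.

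For $\rho\in(1,\sqrt2)$ the claim $\mathbb{E}[q(Y_1)^2]=O(n^{-4})$ is actually false. Take $Y_1=a(\log n/p)^{1/4}$ with $a>0$ small but fixed. Then $q\approx n^{-J(a)}$ with $J(a)=\inf_b\{\rho b^2+\tfrac{b^4}{4}+\tfrac12(2-ab)_+^2\}=2-a^2/\rho+O(a^4)$. This region has probability $n^{-\rho a^2+O(a^4)}$ up to $n^{o(1)}$ factors. Hence $\mathbb{E}[q(Y_1)^2]\ge n^{-4+(2/\rho-\rho)a^2+O(a^4)+o(1)}\gg n^{-4}$. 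The shared factor $Y_1$ makes joint exceedances far likelier than your Gaussian heuristic predicts.

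So $b_2\to0$ needs another argument. The conclusion itself is still true: the relevant exponent $\inf_{a,b}\{\rho a^2+\tfrac{a^4}{4}+2\rho b^2+\tfrac{b^4}{2}+(2-ab)_+^2\}$ exceeds $3$ for every $\rho\ge1$ (about $3.8$ at $\rho=1$). One could prove this with the kind of variational estimate the paper uses in the critical regime. The paper's proof of this theorem instead uses a simpler truncation.

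Split $I_{ij}=I_{ij}\mathbf 1_{\mathcal A_{ij}}+I_{ij}\mathbf 1_{\mathcal A_{ij}^c}$ with $\mathcal A_{ij}=\{(p-2\kappa_n)(Y_i^2+Y_j^2)\le R_n\}$ and $R_n=\sqrt{\log n}$.

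First, the removed part. By the domination bound $\mathbb{P}(|S_{12}|>t\mid Y_1,Y_2)\lesssim\Psi_{p-1}(u)e^{u\sqrt p|Y_1Y_2|}$, it has expected count $O\bigl(n^2\Psi_{p-1}(u)\,\mathbb{E}[e^{u\sqrt p|Y_1Y_2|}\mathbf 1_{\mathcal A_{12}^c}]\bigr)=o(1)$. This uses exactly the uniform integrability you already need in Step 3.

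Second, the retained part. On $\mathcal A_{ij}$ one has $u\sqrt p\,|Y_iY_j|\le\frac{u\sqrt p}{2(p-2\kappa_n)}R_n=O(R_n)$. So the truncated conditional exceedance probability is at most $\Psi_{p-1}(u)e^{CR_n}$ uniformly in $\bm X_1$. This gives $b_2\lesssim n^3\Psi_{p-1}(u)^2e^{2CR_n}=n^{-1+o(1)}$ for the truncated indicators.

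Because $\mathcal A_{ij}$ depends only on $(\bm X_i,\bm X_j)$, the dependency graph is unchanged. You can therefore apply Chen--Stein to the truncated family and discard the removed part, which is nonzero only with probability $o(1)$.
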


Theorem \ref{thm: Watson} states that when $\rho> 1$, $P_n$ converges to a shifted Gumbel law
\[
 P_n+\log(1-\rho^{-2}) \stackrel{d}{\to}  G,
 \qquad
 \mathbb P(G\le y)=\exp\la -(8\pi)^{-1/2}e^{-y/2} \ra
\]
under the Watson alternatives, with deterministic shift $\log \lb 1 - \rho^{-2} \rb$. Thus the detection rate achieved by the packing test in this case is of order 
\[
 \kappa_n
 =\frac p2-\rho\sqrt{p\log n}\cdot \lb 1+o(1)\rb.
\]
In other words, the detection rate of the packing test in the Watson model is 
\[
\kappa_{\rm packing} = p/2 - \Theta \lb \sqrt{p \log n} \rb.
\]
Let us compare this detection rate to the minimax rate in the Watson model \cite{jiang2025detecting,Cutting-P-V2}
\[
\kappa_{\rm minimax} = \begin{cases}
    \Theta \lb \frac{p^{3/2}}{\sqrt{n}} \rb, \qquad &\text{if $p \ll n$} \\
    \frac{p}{2} - \Theta \lb \sqrt{pn} \rb,   &\text{if $p \gg n$}.
\end{cases}
\]
Comparing $\kappa_{\rm packing}$ and $\kappa_{\rm minimax}$, we see that the level-$\alpha$ packing test is suboptimal in both dimension and sample size in the low-dimensional regime $p=o(n)$. Interestingly, in the high-dimensional regime, it correctly captures the dimensional dependence but is suboptimal in terms of sample-size scaling.

As a direct consequence of Theorem \ref{thm: Watson}, we can calculate the asymptotic power function of the packing test against the Watson alternatives as follows. Let \(q_\alpha\) denote the asymptotic level-\(\alpha\) critical value under
the null, defined by
\[
 q_\alpha
 =
 -2\log\left[
 -\sqrt{8\pi}\log(1-\alpha)
 \right].
\]
Suppose that the Watson concentration parameter satisfies
\[
 \frac{\Delta_n}{\sqrt{p\log n}}\to \rho\in[0,\infty) .
\]
Thus the asymptotic power function is
\begin{align} \label{power near 1}
 \beta_{\rm Wat}(\kappa_n,\bm\mu_n)
 =
 \begin{cases}
 1-(1-\alpha)^{(1-\rho^{-2})^{-1/2}},
 & \rho>1,\\[4pt]
 1,
 & 0\le \rho\le1.
 \end{cases}
\end{align}
The assertion of asymptotic power one in the regime $0 \leq \rho \leq 1$ is a consequence of Theorems \ref{thm:watson-rho<1} and \ref{thm:Watson-critical} below. Note that although the case $\rho=1$ does not exactly follow from Theorem \ref{thm:Watson-critical}, its proof requires only minor modifications (and is even simpler because we only need to prove consistency). The needed modifications can be found in Appendix \ref{appendix: power near 1}.

From the asymptotic power formula above, as \(\rho\downarrow1\), the exponent
\((1-\rho^{-2})^{-1/2}\) diverges and the power tends to one. On the other
hand, as \(\rho\to\infty\), the exponent tends to one and the limiting power
approaches the size \(\alpha\). This is consistent with the interpretation
that the packing test has nontrivial asymptotic power only when
\[
 \Delta_n=\frac p2-\kappa_n
 =
 \Theta\left(\sqrt{p\log n}\right),
\]
or equivalently
\[
 \kappa_n
 =
 \frac p2- \Theta\left(\sqrt{p\log n}\right).
\]

\subsection{The non-local regime $\rho \in (0,1)$} \label{sec:Watson-non-local}

In the non-local regime $\rho \in (0,1)$, the scaling limit of $M_n$ in \eqref{S- M} becomes $\rho$-dependent. This dependence arises because the distribution of $M_n$ is also affected by the pairwise products of the cosine components (see the representations \eqref{representation} and \eqref{representation2}), rather than solely by the inner products of the uniform components. Before stating our result, define
\begin{align}
    & A(\rho) := \sqrt{2(1+\rho^2)}; \qquad a(\rho):= 2 \left[ \sqrt{2(1+\rho^2)} - \rho \right]; \label{A-a}\\
    & K(\rho):= \frac{\rho}{\sqrt{2\pi} \cdot A(\rho) \cdot \sqrt{\lb A(\rho) - \rho \rb\cdot \lb A(\rho) -2\rho \rb}}. \label{K rho}
\end{align}
Our main result in the non-local regime $\rho<1$ is as follows.
\begin{thm} \label{thm:watson-rho<1}
    Assume that
    \[
    \frac{p}{\lb \log n \rb^3} \to \infty; \qquad \rho_n=\frac{\Delta_n}{\sqrt{p \log n}} \to \rho \in (0,1).
    \]
    With $M_n$ defined as in \eqref{S- M} and $x  \in \mb R$, we have 
    \[
    p \cdot M_n^2 -  a\lb \rho_n \rb^2 \cdot \log n + \frac{a\lb \rho_n \rb}{A\lb \rho_n \rb}\cdot \log \log n \stackrel{d}{\to} G_\rho
    \]
    where $G_\rho$ is a Gumbel distribution with CDF
    \[
    F_\rho (x):= \exp\left[ -K(\rho)\cdot \exp \lb - \frac{A(\rho)}{2 a(\rho)} \cdot x \rb \right].
    \]
\end{thm}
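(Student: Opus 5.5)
The plan is to condition on the cosine components and run a Chen--Stein Poisson approximation for the exceedance count, with a Laplace-method computation of the mean. Write each observation as $\bm X_i = V_i\bm\mu + \sqrt{1-V_i^2}\,\bm U_i$, with $V_i=\bm X_i^\top\bm\mu$ and $\bm U_i$ uniform on the unit sphere of $\bm\mu^\perp$, independent of $V_i$. This is the representation \eqref{representation} referred to above. Then
\[
S_{ij}=V_iV_j+\sqrt{(1-V_i^2)(1-V_j^2)}\,\bm U_i^\top\bm U_j ,
\]
and, conditionally on $\bm V=(V_1,\dots,V_n)$, the variables $\bm U_i^\top\bm U_j$ are distributed as $Z_{p-1}$ and are pairwise independent. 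The density of $V$ is proportional to $(1-v^2)^{(p-3)/2}e^{\kappa v^2}$. With $\kappa=p/2-\Delta_n$ and $pv^6\ll1$ on the relevant range $v^2\asymp\sqrt{\log n/p}$ (this is where $p/(\log n)^3\to\infty$ enters), the density is
\[
\propto \exp\!\bigl(-\Delta_n v^2-pv^4/4+O(pv^6+v^2)\bigr).
\]
Now fix the threshold $t=\theta\sqrt{\log n/p}$ and consider pairs with $V_i^2\approx V_j^2\approx s=\sigma\sqrt{\log n/p}$, both of the same sign. The log-cost of such a pair exceeding $t$ is approximately
\[
\log n\,\bigl[2\rho\sigma+\sigma^2/2+(\theta-\sigma)^2/2\bigr].
\]
Minimizing over $\sigma$ gives $\sigma=(\theta-2\rho)/2$. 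Setting the minimal cost equal to $2\log n$, the budget of $\binom n2$ pairs, yields $\sigma+2\rho=A(\rho)$ and $\theta=a(\rho)$. This explains the centering $a(\rho_n)^2\log n$, and $\rho<1$ is exactly the condition $\sigma>0$. Each extreme point costs only $\rho\sigma+\sigma^2/4<1$ in units of $\log n$, so there are $n^{c}$ candidate points with $c>0$; the extremes are therefore spread out and not driven by a single point.

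The steps, in order, are as follows.

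\emph{Step 1.} Take the threshold $t_n(x)$ defined by $p\,t_n^2=a(\rho_n)^2\log n-\frac{a(\rho_n)}{A(\rho_n)}\log\log n+x$ and let $W=\sum_{i<j}\mathbf 1\{|S_{ij}|>t_n\}$. The goal is $\mathbb P(W=0)\to F_\rho(x)$.

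\emph{Step 2.} Conditionally on $\bm V$, apply the Arratia--Goldstein--Gordon bound with dependency neighbourhoods given by pairs sharing an index. Given $\bm V$, the variables $\bm U_i^\top\bm U_j$ and $\bm U_i^\top\bm U_k$ are independent, so the $b_2$ term reduces to $\sum_i\bigl(\sum_j\pi_{ij}\bigr)^2$, where $\pi_{ij}=\mathbb P(|S_{ij}|>t\mid\bm V)$, and $b_3=0$. I then bound $\mathbb E\sum_i(\sum_j\pi_{ij})^2$ by a Laplace computation. Because the optimal configuration uses $n^c$ distinct points, the dominant contribution should be $o(1)$, and this must be checked with the exponent strictly less than $0$ for $\rho<1$. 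The conclusion is $\mathbb P(W=0\mid\bm V)=e^{-\lambda(\bm V)}+o_P(1)$ with $\lambda(\bm V)=\sum_{i<j}\pi_{ij}$.

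\emph{Step 3.} Show $\lambda(\bm V)\to K(\rho)e^{-A x/(2a)}$ in probability. Since $\lambda(\bm V)$ is a U-statistic in the $V_i$, it suffices to compute its mean and show that its variance is $o(1)$. The variance involves pairs sharing one index, which is the same quantity controlled in Step 2, so this is again a Laplace bound. For the mean, use the tail $\Psi_{p-1}$ of Lemma-type spherical estimates: $\mathbb P(Z_{p-1}>y/\sqrt{p})\approx(2\pi)^{-1/2}y^{-1}e^{-y^2/2}$ in the moderate-deviation range. Then integrate against the two Watson densities, including the normalizing constant $c^{\rm Wat}_{p,\kappa}$, whose asymptotics $\asymp\sqrt{\Delta_n}$ (or the appropriate Gaussian-quartic form) must be computed precisely. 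A two-dimensional Laplace expansion around $(\sigma,\sigma)$ produces the Hessian factor. I expect it to be $\sqrt{(A-\rho)(A-2\rho)}$, the factor $\rho/A$, and the $\log\log n$ power whose cancellation fixes the coefficient $a/A$. The slope $A/(2a)$ in $x$ comes from $\partial(\text{cost})/\partial(\theta^2)$ at the optimum. Both signs of $V_iV_j$ and $\pm S_{ij}$ have to be accounted for in the constant.

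Finally, dominated convergence gives $\mathbb P(W=0)=\mathbb E\,e^{-\lambda(\bm V)}+o(1)\to F_\rho(x)$, and $\{W=0\}=\{pM_n^2\le pt_n^2\}$.

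The main obstacle is Step 3's precise second-order Laplace asymptotics. Getting the exact constant $K(\rho)$ and the $\log\log n$ coefficient requires uniform control of the Watson density, its normalizer, and the spherical tail $\Psi_{p-1}$ up to $1+o(1)$ factors on the window $v^2\asymp\sqrt{\log n/p}$. It also requires showing that configurations far from the optimum, such as one large and one small $V$, or opposite signs, contribute $o(1)$. I would start by replacing all densities by their Gaussian-quartic approximations with explicit error bounds, and then perform the Laplace expansion in the rescaled variables $(V_i^2,V_j^2)\sqrt{p/\log n}$.
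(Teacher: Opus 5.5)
Your heuristics and predicted constants match the paper: the cost $2\rho\sigma+\sigma^2/2+(\theta-\sigma)^2/2$, the optimum $\sigma=A-2\rho$ and $\theta=a$, the slope $A/(2a)$, the $\log\log n$ coefficient, and the Hessian factor. Where you differ is the organization of the Poisson step.

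\textbf{The paper's route.} The paper applies the Arratia--Goldstein--Gordon bound unconditionally, with $\lambda_n=\mathbb{E}W$. Before doing so, it truncates each exceedance indicator to the event that the rescaled cosines lie in small balls around the two saddle points $(\pm\zeta_n,\pm\zeta_n)$, where $\zeta_n^2=A(\rho_n)-2\rho_n$. The discarded part has first moment $o(1)$, by the same Laplace analysis that gives the mean. After truncation the shared-index term is a local computation: the shared point is pinned near $\pm\zeta_n$ and each partner pays about $A^2/2$. The exponent is then $3(\rho\zeta^2+\zeta^4/4)+A^2\to(7+\rho^2)/2>3$.

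\textbf{Your route.} You condition on the cosines and do not truncate. The conditioning is legitimate but buys nothing. Since $\mathbb{E}[\pi_{12}\pi_{13}]=\mathbb{P}(I_{12}=1,I_{13}=1)$, your $\mathbb{E}\sum_i(\sum_j\pi_{ij})^2$ is the unconditional $b_2$ plus a diagonal term $n(n-1)\mathbb{E}\pi_{12}^2$. The unconditional bound replaces that diagonal term by the trivial $n^2(\mathbb{E}\pi_{12})^2$. The variance of $\lambda(\mathbf V)$ in your Step 3 is controlled by the same two quantities. So you could apply the bound unconditionally and drop the concentration argument.

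\textbf{The thin spot: the untruncated shared-index estimate.} Your justification (``the optimal configuration uses $n^c$ distinct points'') only evaluates the cost at the saddle configuration. The configuration that actually threatens $b_2$ is a hub: the shared point has a larger cosine $u>\zeta$, so both of its partners become cheaper.

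For $\{S_{12}>t,S_{13}>t\}$ with rescaled cosines $(u,v,w)$, the cost is
$\phi(u)+\phi(v)+\phi(w)+\tfrac12(a-uv)_+^2+\tfrac12(a-uw)_+^2$, and it is minimized at $v=w$. Mixed-sign events reduce to this case by flipping $\mathbf X_3$. Without truncation you therefore need the global inequality
\[
\inf_{u,v\ge 0}\Bigl\{\phi(u)+2\phi(v)+\bigl(a(\rho)-uv\bigr)_+^2\Bigr\}>3,
\qquad \phi(\alpha):=\rho\alpha^2+\tfrac14\alpha^4 .
\]
For the diagonal term you also need the easier bound $\inf\{\phi(u)+\phi(v)+(a-uv)_+^2\}>2$.

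The global inequality does hold. Set $q=uv$. AM--GM gives $\rho u^2+2\rho v^2\ge 2\sqrt2\,\rho q$ and $u^4/4+v^4/2\ge q^2/\sqrt2$. Minimizing $2\sqrt2\rho q+q^2/\sqrt2+(a-q)_+^2$ over $q\ge0$ gives $a^2-(a-\sqrt2\rho)^2/(1+1/\sqrt2)$. This equals $8(\sqrt2-1)\approx 3.31$ at $\rho=0$ and rises to about $3.8$ at $\rho=1$.

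This is the same device the paper uses in the critical case (Lemma~\ref{lem:technical-minimum}). With it, together with crude uniform bounds such as \eqref{eq:watson-md-crude-bound} for $g_n$ and $\Pi_n\le \mathrm{poly}(L)\,e^{-\frac L2(a-|uv|-\varepsilon)_+^2}$, your route closes.

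\textbf{Trade-off.} The paper's truncation reduces $b_2$ to an evaluation at the saddle points, at the price of an extra first-moment step. Your version skips the truncation but must solve the three-point variational problem globally, and as written it asserts rather than proves that step.
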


Let us compare the results of Theorem \ref{thm:watson-rho<1} and Theorem \ref{thm: Watson}. Although both limiting distributions are of Gumbel type, their scalings are markedly different: Theorem \ref{thm: Watson} implies that
\[
M_n^2 = \frac{4 \log n}{p} - \frac{\log \log n}{p} + o_{\mb P} \lb \frac{\log \log n}{p} \rb
\] 
In particular, the first- and second-order approximations of $M_n^2$ are independent of the value of $\rho$ as long as $\rho >1$. This scaling is also the same as under the null (uniformity). In contrast, Theorem \ref{thm:watson-rho<1} gives
\begin{align*}
M_n^2  &= \la 4\left[ \sqrt{2(1+\rho^2)} - \rho \right]^2 + o(1) \ra \cdot \frac{\log n}{p} - \left[ \frac{2\sqrt{1+\rho^2} - \sqrt{2}\rho}{\sqrt{1+\rho^2}} + o\lb 1 \rb \right] \cdot \frac{\log \log n}{p}  \\
&+  o_{\mb P} \lb \frac{\log \log n}{p} \rb
\end{align*}
for which the first- and second-order approximations depend on the value of $\rho$ when $\rho < 1$. Here the $o(1)$ term is due to $\rho_n \to \rho$. Interestingly, the transition from the local regime to the non-local regime is continuous but not smooth. 

We note that in this non-local regime, we require a slightly stronger condition on the relative growth of the dimension and sample size: we need $p/(\log n)^3 \to \infty$ instead of $p/(\log n)^2 \to \infty$. This is a technical condition that arises in the large-deviation analysis. It might be possible to get a sharper condition by a more careful asymptotic analysis. However, since our goal is to highlight the phenomenon rather than to investigate the sharpest possible conditions, we will not pursue this technical extension in the current paper.

\subsection{The critical regime $\rho=1$} \label{sec:watson-critical}

We now investigate the critical regime $\rho=1$. Interestingly, the scaling in this regime is discontinuous at the second-order approximation. 

\begin{thm} \label{thm:Watson-critical}
      Assume that
    \[
    \frac{p}{\lb \log n \rb^3} \to \infty; \qquad  \sqrt{\log n} \lb \rho_n -1 \rb \to \Delta \in \mb{R}. 
    \]
    Define
\[
 K_{\rm cr} \lb \Delta \rb
 :=
 \frac{1}{4\pi}
 \int_{-\infty}^{\infty}
 \exp\left\{-\Delta y^2-\frac{y^4}{4}\right\}\,dy .
\]
    With $M_n$ defined as in \eqref{S- M} and $x  \in \mb R$, we have 
    \[
 \mb P_{\kappa_n,\bm\mu_n}
 \left(
 pM_n^2\le 4 \log n-\frac12\log \log n +x
 \right)
 {\to}
 \exp\lb -K_{\rm cr} \lb \Delta \rb \cdot  e^{-x/2} \rb.
\]
\end{thm}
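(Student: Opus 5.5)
The plan is to reduce the statement to a Poisson approximation for the exceedance count $W$ of \eqref{I-W}, with the threshold
\[
u^2=4\log n-\tfrac12\log\log n+x,\qquad t=u/\sqrt p,
\]
so that $\{pM_n^2\le u^2\}=\{W=0\}$. I will use the Chen--Stein bound of \cite{arratia1989two}, applied with the dependency neighbourhood of a pair $(i,j)$ taken to be all pairs sharing an index. It gives
\[
\bigl|\mb P(W=0)-e^{-\lambda}\bigr|\le b_1+b_2,\qquad \lambda=\mb E W=\binom n2\,\mb P(|S_{12}|>t).
\]
Here $b_1\lesssim n^3\,\mb P(|S_{12}|>t)^2$, $b_2\lesssim n^3\,\mb P(|S_{12}|>t,\,|S_{13}|>t)$, and $b_3=0$ because disjoint pairs are independent. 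So it suffices to prove two things: $\lambda\to K_{\rm cr}(\Delta)e^{-x/2}$, and $n^3\,\mb P(|S_{12}|>t,|S_{13}|>t)\to0$. Given the first, $b_1=O(\lambda^2/n)\to 0$ automatically.

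\emph{Step 1 (representation).} I use the tangent--normal decomposition $\bm X_i=T_i\bm\mu+\sqrt{1-T_i^2}\,\bm U_i$. The $\bm U_i$ are i.i.d.\ uniform on the sphere of $\bm\mu^\perp$, independent of the $T_i$, and $T_i^2$ has density proportional to $s^{-1/2}(1-s)^{(p-3)/2}e^{\kappa_n s}$. This gives
\[
S_{ij}=T_iT_j+\sqrt{(1-T_i^2)(1-T_j^2)}\,V_{ij},\qquad V_{ij}\stackrel{d}{=}Z_{p-1}.
\]
I expand the log-density of $T^2$ with $s=T^2$:
\[
\kappa_n s+\tfrac{p-3}{2}\log(1-s)=-\Delta_n s-\tfrac p4 s^2+O(ps^3)+O(s).
\]
The relevant scale is $s\asymp\sqrt{\log n/p}$, and the hypothesis $p/(\log n)^3\to\infty$ makes the $ps^3$ error $o(1)$ on this scale.

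\emph{Step 2 (the mean $\lambda$).} I condition on $(T_1,T_2)$. By the Gaussian-type tail of $\sqrt p\,Z_{p-1}$ (the function $\Psi_d$, treated as in the proof of Theorem~\ref{thm: Watson}), one has, up to $1+o(1)$ factors,
\[
\mb P(|S_{12}|>t\mid T_1,T_2)\approx \frac{2}{\sqrt{2\pi}\,u}\exp\Bigl\{-\tfrac12 u^2\bigl(1+T_1^2+T_2^2\bigr)+u\sqrt p\,|T_1T_2|\cdot(\text{sign terms})\Bigr\}.
\]
The variance factor $(1-T_1^2)(1-T_2^2)$ and the mean shift $T_1T_2$ both enter at the scale $T^2\sim\sqrt{\log n/p}$. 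I then integrate against the two independent $T$-densities. After the substitution $s_k=\sqrt{\log n/p}\,(1+\text{small})$ the exponent becomes a quadratic form with a degenerate direction exactly when $\rho=1$; this is the algebraic counterpart of the transition between Theorem~\ref{thm: Watson} and Theorem~\ref{thm:watson-rho<1}. Along the degenerate direction, the next order is the quartic term $-\tfrac p4 s^2$ together with the perturbation $\sqrt{\log n}(\rho_n-1)\to\Delta$. Rescaling that direction by $(\log n)^{-1/4}$ produces
\[
\int_{-\infty}^{\infty}e^{-\Delta y^2-y^4/4}\,dy .
\]
The transverse Gaussian directions are integrated out by ordinary Laplace asymptotics. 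Their power-of-$\log n$ prefactors combine with $n^2/u$ to cancel $e^{-(4\log n-\frac12\log\log n)/2}$. The $(\log n)^{1/4}$ from the quartic scaling is what changes $\log\log n$ to $\tfrac12\log\log n$. Bookkeeping the constants should give $K_{\rm cr}(\Delta)e^{-x/2}$.

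\emph{Step 3 (the main obstacle: $b_2$).} I expect the hardest step to be showing $n^3\,\mb P(|S_{12}|>t,|S_{13}|>t)\to0$. Conditionally on $T_1,T_2,T_3$ and $\bm U_1$, the events are independent. So the probability is $\mb E[g(T_1,T_2)g(T_1,T_3)]$, where $g$ is the conditional tail, and the danger is an atypically large $T_1$ boosting both events. I would first bound $g$ by the explicit exponential envelope from Step 2, valid uniformly for $T^2\le \varepsilon$. Integrating out $T_2$ and $T_3$ leaves a one-dimensional integral in $s_1$. Its exponent is maximized away from the critical line, and there I aim to show it is at most $n^{-3-\delta}$. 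Separately, the region $T_1^2>\varepsilon$ is discarded by a crude large-deviation bound on the $T$-density, which is exponentially small in $p\gg(\log n)^3$. If the envelope is too lossy near $s_1\asymp\sqrt{\log n/p}$, I would split that range further and use the exact quartic exponent there. Combining Steps 2 and 3 with Chen--Stein yields the stated limit.
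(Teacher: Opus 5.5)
The overall architecture matches the paper's: Chen--Stein with the shared-index neighbourhood, the tangent--normal decomposition, a Laplace evaluation of $\lambda$ with one quartic direction, and a pointwise bound on the conditional exceedance probability given $\bm X_1$ for $b_2$. The gap is in Step~2, which is where $K_{\rm cr}(\Delta)$ is produced; as described, it yields the wrong constant.

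You approximate $\mb P(S_{12}>t\mid T_1,T_2)$ by the exponential tilt $\Psi_d(u)\,e^{u\sqrt p\,T_1T_2}$, as in the proof of Theorem~\ref{thm: Watson}. That step rests on \eqref{eq:Psi-local-ratio}, which needs $\sqrt p\,T_1T_2=O(1/u)$. At $\rho=1$ the mass sits near $T_1\approx T_2\approx\tau=y/(\sqrt2\,p^{1/4})$ with $y=O(1)$. There $\sqrt p\,T_1T_2\approx y^2/2$ is of order one. So the Gaussian exponent $-\tfrac12(u-\sqrt p\,T_1T_2)^2$ carries an extra $-\tfrac p2(T_1T_2)^2\approx -y^4/8$ beyond the tilt.

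This term is exactly half of the quartic. The density contributes the other half, $-\tfrac p4(T_1^4+T_2^4)\approx -y^4/8$, while $u\sqrt p\,\tau^2-2\Delta_n\tau^2\to-\Delta y^2$. With only the density quartic, as you describe it, the degenerate direction gives $\int e^{-\Delta y^2-y^4/8}\,dy$. That is a different function of $\Delta$: it is already off by a factor $2^{1/4}$ at $\Delta=0$, and rescaling $y$ cannot turn it into $\int e^{-\Delta y^2-y^4/4}\,dy$. The paper keeps the full conditional tail, via the moderate-deviation asymptotic at argument $(2-\alpha_1\alpha_2)\sqrt{L}$. Its exponent is $2+\rho_n(\alpha_1^2+\alpha_2^2)-2\alpha_1\alpha_2+\tfrac14(\alpha_1^2+\alpha_2^2)^2$, and the cross term $\tfrac12\alpha_1^2\alpha_2^2$ is precisely what your linearization drops.

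Two further scaling claims in Step~2 need correcting.
\begin{itemize}
\item The saddle at $\rho=1$ is at $T_1=T_2=0$ on the $(\log n/p)^{1/4}$ scale. The degenerate direction has $T_1+T_2\sim p^{-1/4}$ and the transverse direction has $T_1-T_2\sim(\log n)^{-1/4}p^{-1/4}$; this is the paper's region $\mc E_{n,M}$. It is not at $T_k^2\approx\sqrt{\log n/p}$, where the exponent is $3\log n$ rather than $2\log n$.
\item The variance factor $(1-T_1^2)(1-T_2^2)$ does not enter at leading order. For $\alpha=O(1)$ it changes the exponent only by $O((\log n)^{3/2}/\sqrt p)=o(1)$. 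This is another place where $p/(\log n)^3\to\infty$ is used.
\end{itemize}

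Your Step~3 plan matches the paper's: bound $\eta_n^+(\bm x)$ pointwise, integrate against the $T_1$-density, and show the total exponent exceeds $3$. The paper carries this out with the envelope $\exp\{-\tfrac L2(2-|ab|-\ve)_+^2\}$, strong convexity in $\alpha$, and Lemma~\ref{lem:technical-minimum}. You would still need to do that minimization explicitly.
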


In addition to the assumption that $\rho_n \to 1$, we also require that it fluctuates around $1$ at the scale $1/\sqrt{\log n}$. This is a technical assumption that simplifies the analysis. We believe it is possible to prove the result in a slightly more general setting, but we do not pursue such an extension in the present article. 

Now Theorem \ref{thm:Watson-critical} implies that
\[
M_n^2 = \frac{4 \log n}{p} -  \frac{1}{2} \cdot \frac{\log \log n}{p} + o_{\mb P} \lb \frac{\log \log n}{p} \rb.
\]
Comparing the above with Theorems \ref{thm: Watson} and \ref{thm:watson-rho<1}, we see that the first-order approximation term undergoes a continuously differentiable transition (as a function of $\rho$) across the three regimes (ignoring the $o(1)$ terms), as shown in Table \ref{tab:watson-1st-order} below.  

\begin{table}[ht]
\centering
\begingroup
\renewcommand{\arraystretch}{1.8}
\begin{tabular}
{
|>{\centering\arraybackslash}m{0.22\textwidth}
|>{\centering\arraybackslash}m{0.58\textwidth}|
}
\hline
Regime & First-order approximation \\ \hline

\(\rho>1\)
&
\(\displaystyle \frac{4 \log n}{p}\)
\\ \hline

\(\rho=1\)
&
\(\displaystyle \frac{4 \log n}{p}\)
\\ \hline

\(\rho\in(0,1)\)
&
\(\displaystyle
4 \left[ \sqrt{2(1+\rho^2)}-\rho \right]^2
\cdot \frac{\log n}{p}
\)
\\ \hline

\end{tabular}
\endgroup
\caption{First-order approximation.}
\label{tab:watson-1st-order}
\end{table}

From Table \ref{tab:watson-1st-order}, it is easy to check that the leading constant in the first-order approximation, which is of order $\log n/p$, is a differentiable function of $\rho$. However, the second-order approximation undergoes a discontinuous transition at $\rho=1$, as demonstrated in Table \ref{tab:watson-2nd-order} below.
\begin{table}[ht]
\centering
\begingroup
\renewcommand{\arraystretch}{1.8}
\begin{tabular}{
|>{\centering\arraybackslash}m{0.22\textwidth}
|>{\centering\arraybackslash}m{0.58\textwidth}|
}
\hline
Regime & Second-order approximation \\ \hline

\(\rho>1\)
&
\(\displaystyle  - \frac{\log \log n}{p} \)
\\ \hline

\(\rho=1\)
&
\(\displaystyle  - \frac{1}{2}  \cdot\frac{\log \log n}{p}  \)
\\ \hline

\(\rho\in(0,1)\)
&
\(\displaystyle
-\frac{2\sqrt{1+\rho^2} - \sqrt{2}\rho}{\sqrt{1+\rho^2}} \cdot \frac{\log \log n}{p}
\)
\\ \hline

\end{tabular}
\endgroup
\caption{Second-order approximation.}
\label{tab:watson-2nd-order}
\end{table}

As $\rho \to 1$, one can check that the coefficients of the term $\log \log n/p$ on both sides agree and are equal to $-1$, whereas the coefficient is $-1/2$ at $\rho=1$. We believe this is related to the scale at which $\rho_n$ converges to $1$, namely, that convergence at different rates might yield different second-order approximations. There are technical difficulties in extending the results to these settings since the analysis of the correlation term in the Chen--Stein framework (namely the term $b_2$ in Lemma \ref{lem:AGG} below) becomes much more complicated.

\section{Conclusion and discussion} \label{sec:conclusion}
In this paper, we derive the asymptotic distribution of the packing test for uniformity under the high-dimensional FvML and Watson models. 
As a consequence, we precisely characterize the detection thresholds of the packing test under these two models. 
In the high-dimensional FvML model, we find that the detection threshold of the packing test is
\[
\Theta\!\left( \frac{p^{3/4}}{(\log n)^{1/4}} \right),
\]
which has the optimal dependence on the dimension but is suboptimal in terms of the sample size. 
In contrast, under the Watson model, the detection threshold is of the form
\[
\frac{p}{2} - \Theta\!\left(\sqrt{p \log n}\right),
\]
which differs from both the FvML threshold and the known minimax rates.

Our method is based on Poisson approximation and truncation techniques to prove asymptotic independence. We precisely identify the local regime under the Watson model in which the packing test has strictly nontrivial asymptotic power (strictly between the size $\alpha$ and one). 
Outside the local regime, we identify a non-local regime in which the scaling of the largest squared inner product undergoes a discontinuous phase transition, and we show that the packing test is asymptotically powerful throughout this regime.

We believe the framework developed in the current paper can be useful for other models as well. A natural extension is to consider the class of monotone, rotationally invariant distributions in \cite{Cutting-P-V}, which generalizes the two models considered in the present paper. However, extending the framework to those settings requires a new analysis to address the loss of strong convexity, for which Laplace's method might no longer be effective.

\section{Proof idea}

We give a brief heuristic derivation of the three regimes in the Watson model. 
The same type of derivation can also be adapted to the FvML model, where it is simpler because the deviation from uniformity comes from the mean, whereas in the Watson model it comes from the variance. 
By the tangent--normal decomposition \eqref{representation2}, we have the heuristic representation
\[
\la \bm X_i^\top \bm X_j \ra_{1\leq i<j \leq n}
\stackrel{d}{\approx}
\la
T_iT_j
+
\sqrt{1-T_i^2}\sqrt{1-T_j^2}\, W_{ij}
\ra_{1\leq i<j \leq n},
\]
where the variables \(W_{ij}\) are approximately independent normal random variables with variance \(1/p\), and the \(T_i\) are i.i.d. with density \eqref{wT}.

Let us first consider the case \(\rho_n\to\rho>1\). In this regime, the limiting distribution can be guessed from a point-process heuristic. Since the \(T_i\) are small, we make the approximation
\[
\la \sqrt{p}\,\bm X_i^\top \bm X_j \ra_{1\leq i<j \leq n}
\stackrel{d}{\approx}
\la
\sqrt{p}\,T_iT_j+\sqrt{p}\,W_{ij}
\ra_{1\leq i<j \leq n}.
\]
This approximation should be understood in the point-process sense. It can be justified rigorously, although the proof below does not rely on such a point-process result. Since
\[
\sqrt{p-2\kappa_n}\,T_i \approx Z_i,
\qquad Z_i\sim N(0,1),
\]
we can rewrite the preceding approximation as
\[
\la \sqrt{p}\,\bm X_i^\top \bm X_j \ra_{1\leq i<j \leq n}
\stackrel{d}{\approx}
\la
\frac{\sqrt p}{p-2\kappa_n}\,Z_iZ_j+\sqrt p\,W_{ij}
\ra_{1\leq i<j \leq n},
\]
where the \(Z_i\) are i.i.d. standard normal random variables.

It is well known that, for \(\widetilde a_n\sim 2\sqrt{\log n}\),
\[
\la
\widetilde a_n\left(\sqrt p\,W_{ij}-\widetilde a_n\right)
\ra_{1\leq i<j\leq n}
\]
converges in distribution to a Poisson point process on \(\mathbb R\) with intensity measure \(e^{-x}\,dx\). Therefore, heuristically,
\[
\la
\widetilde a_n
\left(
\sqrt p\,\bm X_i^\top \bm X_j-\widetilde a_n
\right)
\ra_{1\leq i<j\leq n}
\stackrel{d}{\approx}
\la
\frac{\sqrt p\cdot\widetilde a_n}{p-2\kappa_n}\,\widetilde Z_i+\Xi_i
\ra_{i\geq 1},
\]
where \(\{\Xi_i:i\geq1\}\) are the points of a Poisson point process with intensity \(e^{-x}\,dx\), and the \(\widetilde Z_i\) are i.i.d. copies of the product of two independent standard normal random variables, independent of the \(\Xi_i\).

The last approximation is again in the point-process sense. In this heuristic derivation, we have ignored the weak dependence among the products \(Z_iZ_j\), which does not affect the first-order extreme-value behavior. Since
\[
\frac{\sqrt p\cdot\widetilde a_n}{p-2\kappa_n}\to \rho^{-1},
\]
the limiting point process is the image of the marked point process
\[
\la \lb \widetilde Z_i,\Xi_i \rb: i\geq1\ra
\]
under the map
$
(z,\xi)\longmapsto \rho^{-1}z+\xi.
$
Thus we expect the limiting distribution of the maximum to satisfy
\begin{align*}
\mb P\left(
\widetilde a_n
\left(
\sqrt p \cdot \max_{i<j}\bm X_i^\top\bm X_j-\widetilde a_n
\right)
\leq x
\right)  
& \approx
\mb P\left(
\text{no pair }(\widetilde Z_i,\Xi_i)
\text{ satisfies }
\rho^{-1}\widetilde Z_i+\Xi_i>x
\right) \\
&=
\exp\left\{
-
\iint_{\{(\xi,z):\,\rho^{-1}z+\xi>x\}}
e^{-\xi}\,d\xi\,dF_{\widetilde Z}(z)
\right\} \\
&=
\exp\left\{
-
e^{-x}
\int_{\mathbb R}
\exp\left(\frac{z}{\rho}\right)
\,dF_{\widetilde Z}(z)
\right\} \\
&=
\exp\left\{
-
e^{-x}\,
\mb E\exp\left(\rho^{-1}\widetilde Z_1\right)
\right\}
=
\exp\left\{
-\frac{e^{-x}}{\sqrt{1-\rho^{-2}}}
\right\}.
\end{align*}
Here \(F_{\widetilde Z}\) denotes the law of \(\widetilde Z_1\), where \(\widetilde Z_1\) is the product of two independent standard normal random variables. The last equality follows from the identity
\[
\mb E\exp\left(\lambda Z_1Z_2\right)
=
\frac{1}{\sqrt{1-\lambda^2}},
\qquad |\lambda|<1,
\]
where \(Z_1,Z_2\) are independent standard normal random variables, applied with \(\lambda=\rho^{-1}\).

The preceding limiting distribution makes sense only when \(\rho>1\). This is precisely the local regime of Theorem \ref{thm: Watson}, and a different approach is needed for the case $\rho \leq 1$. The same calculation also gives the scaling for the largest squared inner product: squaring doubles the edge fluctuation, so in the last display we replace \(x\) by \(x/2\) to obtain the limiting distribution of the largest squared inner product. We emphasize that this is only a heuristic point-process derivation and the rigorous proof below uses Poisson approximation rather than point-process convergence.

The point-process heuristic also suggests why the dependence structure must be handled carefully. In the Watson model, the perturbation involves products \(Z_iZ_j\). Pairs such as \(Z_iZ_j\) and \(Z_iZ_k\) share the common factor \(Z_i\), and this shared factor can increase the probability of simultaneous exceedances. This is why the correlation term \(b_2\) in the Chen--Stein bound requires a more delicate argument. In the rigorous proof, we handle this issue by removing a collection of rare events \(\mc A_{ij}\); see Section \ref{sec:Poisson-watson-local} for details. The analogous derivation for the FvML model is almost identical, except that the product \(Z_1Z_2\) is replaced by \(Z_1+Z_2\). Since \(Z_1+Z_2\) is exactly Gaussian, the FvML case is easier than the Watson case.

We next explain the scaling in the remaining regimes. The same heuristic can be used to identify the correct first-order scale for all values of \(\rho\). We look for a constant \(c\) such that
\[
n^2\,\mb P\left(\sqrt p\,S_{12}\geq c\sqrt{\log n}\right)\asymp 1,
\]
where \(S_{12}\) is defined in \eqref{S- M}. Since
\[
\sqrt p\max_{i<j}W_{ij}
\]
is of order \(\sqrt{\log n}\), the product term \(\sqrt p\,T_1T_2\) must be put on the same scale. To do this, we make the change of variables
\[
T_i=\alpha_i\left(\frac{\log n}{p}\right)^{1/4},
\qquad i=1,2.
\]
The purpose of this change of variables is that \(\sqrt p\,T_1T_2\) is then on the scale \(\sqrt{\log n}\). A direct computation gives the following approximation for the density of \(\alpha_i\):
\[
g_n(\alpha_i)
\approx
\sqrt{\frac{\rho_n\log n}{\pi}}\,
\exp\left\{
-\log n
\left(
\rho_n\alpha_i^2+\frac{\alpha_i^4}{4}
\right)
\right\},
\]
where \(\rho_n\) is defined in Theorem \ref{thm:watson-rho<1}.

Thus the exceedance probability
\[
\mb P\left(\sqrt p\,S_{12}\geq c\sqrt{\log n}\right)
\]
is approximately
\begin{align*}
&\iint
\exp\left\{
-\log n
\left[
\rho_n(\alpha_1^2+\alpha_2^2)
+
\frac{\alpha_1^4+\alpha_2^4}{4}
+
\frac{(c-\alpha_1\alpha_2)_+^2}{2}
\right]
\right\}
\,d\alpha_1\,d\alpha_2 \\
&\qquad \approx
\exp\left\{
-\log n
\cdot
\inf_{\alpha_1,\alpha_2}
\left[
\rho_n(\alpha_1^2+\alpha_2^2)
+
\frac{\alpha_1^4+\alpha_2^4}{4}
+
\frac{(c-\alpha_1\alpha_2)_+^2}{2}
\right]
\right\},
\end{align*}
where the last approximation follows from Laplace's method, and \(x_+=\max\{x,0\}\). A direct calculation yields
\[
\inf_{\alpha_1,\alpha_2}
\left[
\rho_n(\alpha_1^2+\alpha_2^2)
+
\frac{\alpha_1^4+\alpha_2^4}{4}
+
\frac{(c-\alpha_1\alpha_2)_+^2}{2}
\right]
=
\begin{cases}
\frac{c^2}{2}, & \text{if } c\leq 2\rho_n,\\[0.4em]
\frac{c^2}{4}+\rho_n c-\rho_n^2, & \text{if } c>2\rho_n.
\end{cases}
\]
Suppose \(\rho_n\to\rho\). To make
\[
\mb P\left(\sqrt p\,S_{12}\geq c\sqrt{\log n}\right)
\]
of order \(\Theta(n^{-2})\), we need
\[
c=
\begin{cases}
2, & \text{if } \rho\geq1,\\[0.4em]
2\left[\sqrt{2(1+\rho^2)}-\rho\right], & \text{if } \rho<1.
\end{cases}
\]
Thus the tail probability changes behavior depending on whether \(\rho_n\to\rho<1\) or \(\rho_n\to\rho\geq1\). This explains why the scaling changes between the local and non-local regimes in the Watson model.

The proof of asymptotic independence in the Poisson approximation, specifically the treatment of the term \(b_2\) in Lemma \ref{lem:AGG}, proceeds by truncating to small neighborhoods of the minimizers of the variational problem above. These neighborhoods contain the main contribution to the relevant joint exceedance probabilities. 
The minimizers are different in the two regimes. When \(\rho\geq1\), there is a unique minimizer near \((0,0)\). When \(\rho<1\), for the right-tail problem there are two minimizers near
\[
\left(\sqrt{A(\rho)-2\rho},\sqrt{A(\rho)-2\rho}\right)
\qquad\text{and}\qquad
\left(-\sqrt{A(\rho)-2\rho},-\sqrt{A(\rho)-2\rho}\right),
\]
where \(A(\rho)\) is defined in \eqref{A-a}. The change in the saddle-point structure, together with the loss of uniform strong convexity near the critical regime, makes the proof in the non-local regime more involved than the proof in the local regime.

\section{Proof of Theorem \ref{thm: FvML}} \label{sec:proof-fvml}
 
\subsection{Preliminaries}

We first collect some useful asymptotic results. Suppose $\bm X$ has the FvML distribution with concentration parameter $\kappa$ and location $\bm \mu \equiv \bm e_1$ as in \eqref{class fvml}. Then we have the tangent--normal decomposition
 \begin{align} \label{representation}
 \bm X  \stackrel{d}{=} \lb T, \sqrt{1-T^2} \cdot \bm V \rb.
 \end{align}
Here \(T=\bm X^\top\bm\mu\) is the cosine component, equivalently,
the coordinate along $\bm \mu$, \(\bm V\sim
\mbox{Unif}(\mb S^{p-2})\), and \(T\) is independent of \(\bm V\) with density
\begin{align} \label{T}
f_T(t) \propto e^{\kappa t} \cdot \lb 1-t^2 \rb^{(p-3)/2}.
\end{align}

\begin{prop} \label{Herbst}
Let $T$ be a random variable with the density in \eqref{T}, where $\kappa =o(p)$, and put $m= \mb E T$. Then
\begin{equation}\label{eq:fvml-mean-var}
 m=\frac{\kappa}{p}(1+o(1)),
 \qquad
 \Var(T)\le \frac{1}{p-3}.
\end{equation}
Moreover, for all $\lambda \in \mb R$,
\begin{equation}\label{eq:fvml-subgaussian}
 \mb E\exp\{\lambda(T-m)\}
 \le \exp\left\{\frac{\lambda^2}{2(p-3)}\right\}.
\end{equation}
\end{prop}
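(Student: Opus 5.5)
The density of $T$ in \eqref{T} on $(-1,1)$ is $f_T(t)=C\,e^{\kappa t - V(t)}$ with $V(t)=-\frac{p-3}{2}\log(1-t^2)$. Since
\[
V''(t)=(p-3)\frac{1+t^2}{(1-t^2)^2}\ge p-3 ,
\]
the density is strongly log-concave with parameter $p-3$, uniformly in $\kappa$ (the linear tilt $\kappa t$ does not change the Hessian). The plan is to get the variance bound and the sub-Gaussian bound \eqref{eq:fvml-subgaussian} from this strong log-concavity, and the mean asymptotics from a separate Laplace-type computation. This assumes $p>3$, which is harmless since $p\to\infty$.

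For \eqref{eq:fvml-subgaussian}, apply the Bakry--Émery criterion. A probability measure on an interval whose potential has second derivative at least $c=p-3$ satisfies a log-Sobolev inequality with constant $1/c$. Herbst's argument then gives, for every $1$-Lipschitz $g$ (here $g(t)=t$),
\[
\mathbb E\, e^{\lambda(g-\mathbb E g)}\le e^{\lambda^2/(2c)} .
\]
The support is the open interval $(-1,1)$ and $V\to\infty$ at the endpoints, so the measure is a limit of strongly log-concave measures on $\mathbb R$ (extend $V$ convexly by $+\infty$); the standard statement for convex domains covers this. The variance bound $\Var(T)\le 1/(p-3)$ follows either from Brascamp--Lieb, $\Var(T)\le \mathbb E[1/V''(T)]\le 1/(p-3)$, or by expanding \eqref{eq:fvml-subgaussian} to second order in $\lambda$. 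Alternatively, one can avoid citing LSI: the Prékopa/Caffarelli contraction shows that $T$ is a $1$-Lipschitz image of $N(0,1/(p-3))$ restricted appropriately, which gives the same MGF bound.

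For the mean, write $m=\mathbb E T$ as the derivative of the log-partition function,
\[
m=\frac{d}{d\kappa}\log\int_{-1}^1 e^{\kappa t}(1-t^2)^{(p-3)/2}\,dt = \frac{I_{p/2}(\kappa)}{I_{p/2-1}(\kappa)},
\]
a Bessel ratio. The statement $m=\frac{\kappa}{p}(1+o(1))$ for $\kappa=o(p)$ can be obtained without Bessel asymptotics. The integration-by-parts identity
\[
\mathbb E\!\left[\frac{(p-3)\,T}{1-T^2}\right]=\kappa
\]
comes from $\int (e^{\kappa t})'(1-t^2)^{(p-3)/2}\,dt=-\int e^{\kappa t}\big((1-t^2)^{(p-3)/2}\big)'\,dt$, with boundary terms vanishing for $p>3$. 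Then $\frac{T}{1-T^2}=T+\frac{T^3}{1-T^2}$, so
\[
(p-3)\,m = \kappa - (p-3)\,\mathbb E\!\left[\frac{T^3}{1-T^2}\right].
\]
The error term is controlled using concentration. By the sub-Gaussian bound, $T=m+O_{\mathbb P}(p^{-1/2})$ with Gaussian tails. A crude a priori bound $|m|\le \kappa/(p-3)$ also follows from the identity, since $t\mapsto t/(1-t^2)$ dominates $t$ in sign and magnitude, so $|m|=o(1)$. Hence
\[
\mathbb E\!\left[\frac{|T|^3}{1-T^2}\right]\lesssim \mathbb E|T|^3 + \mathbb P(|T|>1/2)\lesssim |m|^3+p^{-3/2}+e^{-cp}.
\]
Multiplying by $(p-3)$ gives an error of order $p|m|^3+p^{-1/2}$. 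This is $o(\kappa)$ relative to the main term whenever $\kappa\gtrsim p^{-1/2}$ since $|m|=o(1)$; for smaller $\kappa$ one instead uses the symmetric Taylor expansion $m=\kappa\,\mathbb E_0[T^2](1+o(1))=\kappa/p\,(1+o(1))$ under the uniform law. Combining the two cases, and replacing $p-3$ by $p$, gives \eqref{eq:fvml-mean-var}.

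The main obstacle is making the mean asymptotics uniform across all regimes $0\le\kappa=o(p)$. The intermediate range, where the cubic correction must be shown negligible, is the delicate part, and I would handle it carefully via the sub-Gaussian concentration already established.
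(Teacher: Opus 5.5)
Your proposal is correct in outline but reaches two of the three claims by a different route from the paper. The variance step is the same as the paper's: Brascamp--Lieb with $V''\ge p-3$.

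\textbf{Sub-Gaussian bound.} The paper uses no log-Sobolev machinery. It notes that every exponential tilt $d\nu_\lambda\propto e^{\lambda t}\,d\nu$ is still $(p-3)$-strongly log-concave. Brascamp--Lieb then gives $\psi''(\lambda)=\Var_{\nu_\lambda}(T)\le 1/(p-3)$ for $\psi(\lambda)=\log\mathbb E e^{\lambda(T-m)}$, and integrating twice from $\psi(0)=\psi'(0)=0$ finishes. Your Bakry--\'Emery/LSI/Herbst argument (or the Caffarelli contraction) gives the same constant and more: the same bound for every $1$-Lipschitz function of $T$. The cost is heavier tools plus the approximation step for the open interval, and neither is needed for the linear statistic $t$.

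\textbf{Mean.} The paper gives no argument and simply cites the proof of Lemma 4 in \cite{Ley-P-2}. Your identity $(p-3)\,\mathbb E[T/(1-T^2)]=\kappa$, combined with concentration, makes this part self-contained, which is a real gain.

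\textbf{Steps in the mean argument that need tightening.}

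First, the remainder bound $p|m|^3+p^{-1/2}$ is $o(\kappa)$ only when $\kappa\sqrt p\to\infty$, not merely when $\kappa\gtrsim p^{-1/2}$. So your small-$\kappa$ argument must cover $\kappa=O(p^{-1/2})$ and overlap with the first regime. A short way to do this: write $\mathbb E_0$ for the $\kappa=0$ law, so that $m=\mathbb E_0[T\sinh(\kappa T)]/\mathbb E_0[e^{\kappa T}]$. The facts $t\sinh(\kappa t)\ge\kappa t^2$, $\mathbb E_0T^2=1/p$, and $\mathbb E_0e^{\kappa T}\le e^{\kappa^2/(2(p-3))}$ (your sub-Gaussian bound at $\kappa=0$) give $m\ge(\kappa/p)\,e^{-\kappa^2/(2(p-3))}$. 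Together with $m\le\kappa/(p-3)$, this settles all $\kappa=o(\sqrt p)$, and a subsequence argument merges the two ranges.

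Second, the bound $0\le m\le\kappa/(p-3)$ does not follow from pointwise domination. It needs $\mathbb E[T^3/(1-T^2)]\ge0$, which holds because the integrand is odd and $f_T(t)\ge f_T(-t)$ for $t>0$ when $\kappa\ge0$.

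Third, $\mathbb E[|T|^3(1-T^2)^{-1}\mathbf 1_{\{|T|>1/2\}}]$ is not bounded by $\mathbb P(|T|>1/2)$, because $(1-T^2)^{-1}$ is unbounded. Bound it directly from the density instead. The extra factor only lowers the exponent $(p-3)/2$ to $(p-5)/2$, so the term is still $e^{-cp}$ relative to the normalizing constant when $\kappa=o(p)$.
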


\noindent \textbf{Proof of Proposition \ref{Herbst}.}
Let \(\nu\) denote the law of \(T\).  By \eqref{T}, \(T\) has density
with respect to Lebesgue measure on \((-1,1)\) of the form
\[
 f_T(z)
 \propto 
 \exp\{\ell(z)\},
 \qquad -1<z<1,
\]
where
\[
 \ell(z):=\kappa z+\frac{p-3}{2}\log(1-z^2).
\]
Note that
\[
 \ell''(z)
 =
 -(p-3)\frac{1+z^2}{(1-z^2)^2} \leq -(p-3),
\]
so the density of $T$ can be written in the form $\exp \lb -V(z) \rb$ with
\[
V''(z) \geq p-3.
\]
By the Brascamp--Lieb inequality [see equation (1.9) in \cite{nguyen2014} and the references therein], we obtain $\Var (T) \leq 1/(p-3)$.
This proves the second claim in \eqref{eq:fvml-mean-var}. The first claim
in \eqref{eq:fvml-mean-var} follows, under the standing assumption
\(\kappa=o(p)\), from the proof of Lemma 4 in \cite{Ley-P-2}.

We now prove the sub-Gaussian bound \eqref{eq:fvml-subgaussian} using the Herbst argument. For
\(\lambda\in\mb R\), define the tilted law \(\nu_\lambda\) by
\[
 d\nu_\lambda(z)
 :=
 \frac{e^{\lambda z}}{\mb E e^{\lambda T}}\,d\nu(z).
\]
Hence the tilted law has density proportional to $\exp \lb -V_\lambda(z) \rb$, where 
\[
 V_\lambda(z):=V(z)-\lambda z.
\]
Observe that $V_\lambda''(z)=V''(z)\ge p-3$.
Therefore, another application of the Brascamp--Lieb inequality under the tilted law \(\nu_\lambda\) gives
\[
 \Var_{\nu_\lambda}(T)
 \le
 \frac{1}{p-3}.
\]
Now define
\[
 \psi(\lambda)
 :=
 \log \mb E\exp\{\lambda(T-m)\},
 \qquad
 m=\mb E T.
\]
A direct calculation yields
\[
 \psi'(\lambda)
 =
 \mb E_{\nu_\lambda}T-m,
 \qquad 
 \psi''(\lambda)
 =
 \Var_{\nu_\lambda}(T) \leq \frac{1}{p-3}.
\]
Moreover,
$
 \psi(0)=0
$
and
$
 \psi'(0)=\mb ET-m=0.
$
Integrating the second-derivative bound twice gives, for all \(\lambda\ge0\),
\[
 \psi(\lambda)
 =
 \int_0^\lambda(\lambda-s)\psi''(s)\,ds
 \le
 \frac{1}{p-3}\int_0^\lambda(\lambda-s)\,ds
 =
 \frac{\lambda^2}{2(p-3)}.
\]
For \(\lambda<0\), the same argument gives
\[
 \psi(\lambda)
 =
 \int_\lambda^0(s-\lambda)\psi''(s)\,ds
 \le
 \frac{1}{p-3}\int_\lambda^0(s-\lambda)\,ds
 =
 \frac{\lambda^2}{2(p-3)}.
\]
Therefore, for every \(\lambda\in\mb R\),
\[
 \log \mb E\exp \Big[ \lambda(T-m)\Big]
 \le
 \frac{\lambda^2}{2(p-3)},
\]
which is \eqref{eq:fvml-subgaussian}. This completes the proof. $\hfill$ $\square$

We next collect some useful facts concerning the moment generating function of the product of two independent copies of $T$.

\begin{prop} \label{prop:exponential moment}
 Suppose $p/\lb  \log n\rb^2 \to \infty$. Recall $T$ from Proposition \ref{Herbst}.  If \(T_1,T_2\) are independent copies of \(T\), then 
\begin{equation}\label{eq:fvml-product-mgf-strong}
 \mb E \exp \lb \pm u_n \sqrt p\,T_1T_2 \rb \to e^{\pm2\tau^2}
\end{equation}
where $u_n$ is defined in \eqref{u-t} and $\kappa=\tau\cdot p^{3/4}/ \lb \log n \rb^{1/4}$. Moreover, for a fixed $q>1$,
\begin{align} \label{eq:sup-moment-product}
     \sup_{n \geq 1}  \Big\{ \mb E \exp \lb q u_n \sqrt p\,|T_1T_2|\rb \Big\} <\infty.
\end{align}
\end{prop}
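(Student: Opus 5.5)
The plan is to prove \eqref{eq:sup-moment-product} first, and then get \eqref{eq:fvml-product-mgf-strong} from convergence in probability of the exponent together with the uniform integrability that \eqref{eq:sup-moment-product} provides.

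\textbf{Decomposition and scales.} Put $\lambda=\lambda_n:=u_n\sqrt p$, so $\lambda^2\sim 4p\log n$. Write $T_i=m+D_i$, where $m=\mb E T$. By Proposition~\ref{Herbst}, $m=\tau p^{-1/4}(\log n)^{-1/4}(1+o(1))$, and each $D_i$ is centered and sub-Gaussian with variance proxy $\sigma^2:=1/(p-3)$. Expanding the product gives
\[
\lambda T_1T_2=\lambda m^2+\lambda m(D_1+D_2)+\lambda D_1D_2 .
\]
Each term has a definite size.
\begin{itemize}
\item $\lambda m^2=2\sqrt{\log n}\,\sqrt p\cdot\tau^2p^{-1/2}(\log n)^{-1/2}(1+o(1))\to2\tau^2$.
\item By Chebyshev with $\Var(D_i)\le\sigma^2$, $\lambda m D_i=O_{\mb P}\bigl(\lambda m/\sqrt p\bigr)=O_{\mb P}\bigl((\log n/p)^{1/4}\bigr)=o_{\mb P}(1)$.
\item $\lambda D_1D_2=O_{\mb P}(\lambda/p)=O_{\mb P}\bigl(\sqrt{\log n/p}\bigr)=o_{\mb P}(1)$.
\end{itemize}
Hence $\pm\lambda T_1T_2\to\pm2\tau^2$ in probability. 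Once \eqref{eq:sup-moment-product} holds with some $q>1$, the family $\exp(\pm\lambda T_1T_2)$ is bounded in $L^q$, so it is uniformly integrable. Vitali's theorem then gives \eqref{eq:fvml-product-mgf-strong}.

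\textbf{Uniform moment bound.} Since $e^{q\lambda|T_1T_2|}\le e^{q\lambda T_1T_2}+e^{-q\lambda T_1T_2}$, it suffices to bound $\mb E\exp(s\lambda T_1T_2)$ uniformly in $n$ for $s=\pm q$.

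First condition on $T_2$. Because $T_1=m+D_1$ is independent of $T_2$, \eqref{eq:fvml-subgaussian} with parameter $s\lambda T_2$ gives
\[
\mb E\bigl[e^{s\lambda T_1T_2}\mid T_2\bigr]\le \exp\bigl(s\lambda mT_2+bT_2^2\bigr),\qquad b:=\frac{q^2\lambda^2}{2(p-3)}\sim 2q^2\log n .
\]
Next substitute $T_2=m+D_2$. This yields
\[
\mb E e^{s\lambda T_1T_2}\le \exp\bigl(s\lambda m^2+bm^2\bigr)\,\mb E\exp\bigl(aD_2+bD_2^2\bigr),\qquad a:=s\lambda m+2bm .
\]
The prefactor is bounded. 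Indeed $\lambda m^2\to2\tau^2$, and $bm^2\asymp\log n\cdot p^{-1/2}(\log n)^{-1/2}=\sqrt{\log n/p}\to0$.

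\textbf{Main step: the quadratic-exponential moment.} The main work is bounding $\mb E\exp(aD_2+bD_2^2)$ for a variable that is only known to be sub-Gaussian. I would use Gaussian decoupling. Let $g\sim N(0,1)$ be independent of $D_2$, so that $e^{bD_2^2}=\mb E_g e^{\sqrt{2b}\,gD_2}$. Fubini and \eqref{eq:fvml-subgaussian} then give
\[
\mb E e^{aD_2+bD_2^2}\le \mb E_g\exp\Bigl(\tfrac{\sigma^2}{2}(a+\sqrt{2b}\,g)^2\Bigr)=(1-2b\sigma^2)^{-1/2}\exp\Bigl(\frac{a^2\sigma^2}{2(1-2b\sigma^2)}\Bigr).
\]
This requires $2b\sigma^2<1$. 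Here $b\sigma^2\asymp\log n/p\to0$, so the condition holds for large $n$. Moreover $a^2\sigma^2\lesssim(\lambda^2m^2+b^2m^2)/p\asymp\sqrt{\log n/p}+(\log n)^{3/2}p^{-3/2}\to0$.

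So $\mb E e^{s\lambda T_1T_2}$ is bounded for all large $n$. For the finitely many small $n$, each expectation is finite because $|T_1T_2|\le1$. Together these give \eqref{eq:sup-moment-product}.

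The hypothesis $p/(\log n)^2\to\infty$ is stronger than needed here; these steps only use $\log n/p\to0$.
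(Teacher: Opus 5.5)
Your proposal is correct. It shares the paper's skeleton: centering $T_i=m+D_i$, the deterministic shift $\lambda m^2\to 2\tau^2$, a vanishing random remainder, then uniform integrability and Vitali. The difference is in how you get the key exponential moment bound, and you also prove the two claims in the reverse order.

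The paper first proves \eqref{eq:fvml-product-mgf-strong} by showing that $e^{\pm\mathcal Q_n}$ is uniformly integrable. To do this it bounds $|T_1T_2|\le m^2+m(|\xi_1|+|\xi_2|)+|\xi_1\xi_2|$ and splits with a three-factor H\"older inequality. The linear factors are controlled by \eqref{eq:fvml-subgaussian}. The bilinear factor is controlled via $ab\le \eta_0a^2+b^2/(4\eta_0)$ and a square-exponential bound $\mathbb{E}e^{a\xi^2}\le C$ for $a\le (p-3)/4$, obtained from the sub-Gaussian tail. The paper then reuses these pieces for \eqref{eq:sup-moment-product}.

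You instead condition on $T_2$ and apply \eqref{eq:fvml-subgaussian} with the random parameter $s\lambda T_2$. You then handle $\mathbb{E}e^{aD_2+bD_2^2}$ by Gaussian decoupling, which turns the mgf bound into a square-exponential bound in one line, with no tail integration or H\"older splitting. All your orders of magnitude check out, and you are right that only $\log n/p\to0$ is used.

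Your route gives more than you exploit. With $s=\pm1$, your explicit bound tends to $e^{2s\tau^2}$, so it is asymptotically sharp. Since $\mathbb{E}[T_1T_2]=m^2$ by independence, Jensen gives $\mathbb{E}e^{s\lambda T_1T_2}\ge e^{s\lambda m^2}\to e^{2s\tau^2}$. These two bounds sandwich the limit and yield \eqref{eq:fvml-product-mgf-strong} directly, without the convergence-in-probability step or Vitali.

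In comparison, the paper's term-by-term splitting is more generic: it only needs separate exponential-moment control of $|\xi_i|$ and $\xi_i^2$. Yours exploits the product structure and independence to get a closed-form bound.
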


\noindent \textbf{Proof of Proposition \ref{prop:exponential moment}.} 
Write
\[
 T_i=m+\xi_i,\qquad i=1,2,
\]
and set
\[
 \beta_n:=u_n\sqrt p.
\]
By \eqref{eq:fvml-subgaussian}, for every \(s\ge0\),
\[
 \mb P(|\xi_i|>s)
 \le
 2\exp\left\{-\frac{(p-3)s^2}{2}\right\}.
\]
Consequently, there is a universal constant \(C\) such that, for all
\(0\le a\le (p-3)/4\),
\begin{equation}\label{eq:fvml-square-exp}
 \mb E e^{a\xi_i^2}\le C.
\end{equation}

{\it \underline{Proof of \eqref{eq:fvml-product-mgf-strong}}.} Write
\[
 \beta_nT_1T_2
 =
 \beta_nm^2+\beta_nm(\xi_1+\xi_2)+\beta_n\xi_1\xi_2.
\]
The deterministic term satisfies
\begin{equation}\label{eq:fvml-deterministic-shift}
 \beta_nm^2
 =
 u_n\sqrt p\,\frac{\kappa^2}{p^2}(1+o(1))
 =
 u_n \frac{\kappa^2}{p^{3/2}}(1+o(1))
 \to
 2\tau^2.
\end{equation}
Moreover, using \(\Var(\xi_i)=\Var(T_i)\le (p-3)^{-1}\), we get
\begin{align*}
 \mb E\left[ \beta_nm(\xi_1+\xi_2)\right]^2
 & \lesssim
 \frac{\beta_n^2m^2}{p}
 =
 u_n^2m^2
 =
 o(1), \\
  \mb E(\beta_n\xi_1\xi_2)^2
 & \lesssim
 \frac{\beta_n^2}{p^2}
 =
 \frac{u_n^2}{p}
 =
 o(1).
\end{align*}
Thus
\begin{equation}\label{eq:fvml-random-remainder-L2}
 \mc Q_n:=
 \beta_nm(\xi_1+\xi_2)+\beta_n\xi_1\xi_2
 \to0
 \qquad\text{in }L^2.
\end{equation}
To deduce \eqref{eq:fvml-product-mgf-strong}, it suffices to verify exponential uniform integrability. By
\eqref{eq:fvml-subgaussian}, for every \(a\ge0\),
\[
 \mb E e^{a|\xi_i|}
 \le
 \mb E e^{a\xi_i}+\mb E e^{-a\xi_i}
 \le
 2\exp\left\{\frac{a^2}{2(p-3)}\right\}.
\]
Fix \(q>1\). Taking \(a=3q\beta_nm\) and using
\[
 \frac{\beta_n^2m^2}{p}
 =
 u_n^2m^2
 =
 O\left( \frac{\log n}{\sqrt{p}}\right)
 =
 o(1),
\]
we obtain
\begin{equation}\label{eq:fvml-linear-exp-bound}
 \sup_n \mb E e^{3q\beta_nm|\xi_i|}<\infty,
 \qquad i=1,2.
\end{equation}
We next treat the product term in \eqref{eq:fvml-random-remainder-L2}. Choose a sufficiently small fixed
\(\eta_0>0\). By the inequality \(ab\le \eta_0a^2+b^2/(4\eta_0)\),
\[
 3q\beta_n \cdot |\xi_1\xi_2|
 \le
 \eta_0p\cdot \xi_1^2
 +
 \frac{9q^2\beta_n^2}{4\eta_0p}\cdot \xi_2^2.
\]
The first coefficient is at most \((p-3)/4\) for all large \(n\), provided
\(\eta_0\) is chosen sufficiently small. The second coefficient equals
\[
 \frac{9q^2\beta_n^2}{4\eta_0p}
 =
 \frac{9q^2u_n^2}{4\eta_0}
 =
 O(\log n)
 =
 o(p),
\]
and hence is also at most \((p-3)/4\) for all sufficiently large \(n\). Therefore, by
\eqref{eq:fvml-square-exp} and independence,
\begin{equation}\label{eq:fvml-product-exp-bound}
 \sup_n \mb E e^{3q\beta_n|\xi_1\xi_2|}<\infty.
\end{equation}
By H\"older's inequality, \eqref{eq:fvml-linear-exp-bound}, and
\eqref{eq:fvml-product-exp-bound},
\[
\begin{aligned}
 \mb E e^{q|\mc Q_n|}
 &\le
 \mb E\exp\Big\{
 q\beta_nm|\xi_1|
 +q\beta_nm|\xi_2|
 +q\beta_n|\xi_1\xi_2|
 \Big\}  \\
 &\le
 \left(\mb E e^{3q\beta_nm|\xi_1|}\right)^{1/3}
 \left(\mb E e^{3q\beta_nm|\xi_2|}\right)^{1/3}
 \left(\mb E e^{3q\beta_n|\xi_1\xi_2|}\right)^{1/3}
 \le C.
\end{aligned}
\]
Hence \(\{e^{\pm \mc Q_n}\}_n\) is uniformly integrable. Since
\(\mc Q_n\to0\) in \(L^2\), and hence in probability, Vitali's theorem gives
\[
 \mb E e^{\pm \mc Q_n}\to1.
\]
Combining this with \eqref{eq:fvml-deterministic-shift}, we obtain
\[
 \mb E e^{\pm\beta_nT_1T_2}
 =
 e^{\pm\beta_nm^2}\mb E e^{\pm \mc Q_n}
 \to
 e^{\pm2\tau^2}.
\]

{\it \underline{Proof of \eqref{eq:sup-moment-product}}.} Using
$
 |T_1T_2|
 \le
 m^2+m(|\xi_1|+|\xi_2|)+|\xi_1\xi_2|
$,
we have
\[
 e^{q\beta_n|T_1T_2|}
 \le
 e^{q\beta_nm^2}
 e^{q\beta_nm|\xi_1|}
 e^{q\beta_nm|\xi_2|}
 e^{q\beta_n|\xi_1\xi_2|}.
\]
The deterministic factor \(e^{q\beta_nm^2}\) is bounded by
\eqref{eq:fvml-deterministic-shift}. Applying H\"older's inequality once
more, together with \eqref{eq:fvml-linear-exp-bound} and
\eqref{eq:fvml-product-exp-bound}, gives
\[
 \sup_n \mb E e^{q\beta_n|T_1T_2|}<\infty.
\]
This completes the proof.
\(\hfill\square\)

The following tail asymptotic result will be used in the proof of Theorem \ref{thm: FvML}. 

\begin{prop}\label{lem:fvml-onepair}
Suppose \(\bm X_1, \bm X_2\) are i.i.d. with the FvML distribution as in
\eqref{representation}, and
\[
 \frac{p}{(\log n)^2}\to\infty,
 \qquad
 \kappa=\kappa_n=\tau\frac{p^{3/4}}{(\log n)^{1/4}}.
\]
Then, with \(t_n=t_n(x)\) as in \eqref{u-t},
\begin{align}
 \mathbb P_{\kappa_n} \lb \bm X_1^\top \bm X_2>t_n \rb
 &=\frac{1+o(1)}{2\sqrt{2\pi}}\cdot n^{-2} \cdot e^{-x/2+2\tau^2},
 \label{eq:fvml-right}\\
 \mathbb P_{\kappa_n} \lb  \bm X_1^\top \bm X_2<-t_n \rb
 &=\frac{1+o(1)}{2\sqrt{2\pi}} \cdot n^{-2} \cdot e^{-x/2-2\tau^2}.
 \label{eq:fvml-left}
\end{align}
Consequently,
\begin{equation}\label{eq:fvml-two-sided}
 p_1:=\mathbb P_{\kappa_n}(|\bm X_1^\top \bm X_2|>t_n)
 =\frac{1+o(1)}{\sqrt{2\pi}} \cdot \frac{e^{-x/2}}{n^2}
 \cdot \cosh(2\tau^2).
\end{equation}
\end{prop}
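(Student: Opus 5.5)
Using the tangent--normal decomposition \eqref{representation} with $\bm\mu=\bm e_1$, I write $\bm X_i=(T_i,\sqrt{1-T_i^2}\,\bm V_i)$ with $T_1,T_2,\bm V_1,\bm V_2$ independent. Then
\[
\bm X_1^\top\bm X_2 = T_1T_2+\sqrt{1-T_1^2}\sqrt{1-T_2^2}\,Z_{p-1},
\]
where $Z_{p-1}=\bm V_1^\top\bm V_2$ has density $f_{p-1}$ from \eqref{eq:spherical-density} and is independent of $(T_1,T_2)$. Conditioning on $(T_1,T_2)$ gives
\[
\mb P(\bm X_1^\top\bm X_2>t_n)=\mb E\,\Psi_{p-1}\!\left(\sqrt{p-1}\,\frac{t_n-T_1T_2}{\sqrt{(1-T_1^2)(1-T_2^2)}}\right).
\]
The plan is to expand $\Psi_{p-1}$ sharply at the relevant scale and integrate the resulting exponential tilt $e^{u_n\sqrt p\,T_1T_2}$ using Proposition \ref{prop:exponential moment}. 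The left tail \eqref{eq:fvml-left} is identical with $t_n\mapsto -t_n$ and the sign of $T_1T_2$ flipped. Summing the two tails gives \eqref{eq:fvml-two-sided}.

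\emph{Step 1: uniform tail asymptotics for $Z_{p-1}$.} I would first establish that, uniformly over $y$ in a window $|y-u_n|\le \delta_n$ with $\delta_n\to0$ slowly,
\[
\Psi_d(y)=\frac{1+o(1)}{\sqrt{2\pi}\,y}\,e^{-y^2/2},
\qquad d=p-1.
\]
This should follow from $f_d(z)=c_d(1-z^2)^{(d-3)/2}$ and $c_d\sim\sqrt{d/(2\pi)}$ by integrating $e^{\frac{d-3}{2}\log(1-z^2)}$ from $y/\sqrt d$ upward. The correction $\log(1-z^2)=-z^2+O(z^4)$ contributes $O(dz^4)=O((\log n)^2/p)=o(1)$ in the exponent, which is exactly where $p/(\log n)^2\to\infty$ is used. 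Applying this at $y=u_n$, with $u_n^2=4\log n-\log\log n+x$, gives
\[
e^{-u_n^2/2}/(\sqrt{2\pi}u_n)=n^{-2}e^{-x/2}\sqrt{\log n}/(2\sqrt{2\pi}\sqrt{\log n})(1+o(1)),
\]
which is the claimed prefactor $\frac{1}{2\sqrt{2\pi}}n^{-2}e^{-x/2}$.

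\emph{Step 2: the good event.} Let $G_n=\{|T_1T_2|\le \eta_n/\sqrt{p\log n}\cdot\log n,\ |T_i|\le C\sqrt{\log n/p}+2m\}$ for a suitable slowly growing choice. On $G_n$ the argument
\[
y_n:=\sqrt{p-1}\,(t_n-T_1T_2)/\sqrt{(1-T_1^2)(1-T_2^2)}
\]
satisfies
\[
y_n^2/2=u_n^2/2-u_n\sqrt p\,T_1T_2+\tfrac{p}{2}(T_1T_2)^2+O\!\left(u_n^2(T_1^2+T_2^2)+u_n^2/p\right).
\]
The error terms are $o(1)$ because $pT_1^4T_2^4$-type quantities and $u_n^2T_i^2\lesssim \log n(\kappa^2/p^2+\log n/p)=o(1)$, using $\kappa/p=\tau p^{-1/4}(\log n)^{-1/4}$ and $p\gg(\log n)^2$. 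Here I also need $p(T_1T_2)^2\to0$ on $G_n$, which holds since typical $T_1T_2\approx m^2\sim \kappa^2/p^2$ gives $pm^4=\tau^4/\log n\to0$. Hence on $G_n$ the integrand equals $(1+o(1))\frac{1}{2\sqrt{2\pi}}n^{-2}e^{-x/2}e^{u_n\sqrt p\,T_1T_2}$. Taking expectations and invoking \eqref{eq:fvml-product-mgf-strong} gives the limit $e^{2\tau^2}$, provided the contribution off $G_n$ is negligible.

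\emph{Step 3: off the good event (main obstacle).} On $G_n^c$ I would bound $\Psi_{p-1}(y)\lesssim e^{-y^2/2}$ crudely (a valid Gaussian-type bound for all $y\ge0$, e.g.\ via $(1-z^2)^{(d-3)/2}\le e^{-(d-3)z^2/2}$), and use $\Psi\le1$ when $y<0$. Two contributions must be controlled. In the first, $y_n\ge0$: I would show $e^{-y_n^2/2}\le n^{-2}e^{-x/2}\sqrt{\log n}\,e^{u_n\sqrt p|T_1T_2|}e^{o(1)}$, crudely dropping the favorable quadratic term, and then apply H\"older with \eqref{eq:sup-moment-product} at $q>1$ and $\mb P(G_n^c)\to0$ from the sub-Gaussian bound \eqref{eq:fvml-subgaussian}. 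The stray factor $\sqrt{\log n}$ is the delicate point. To handle it I would keep the polynomial prefactor $1/y$ from Step 1 whenever $y\ge u_n/2$. The regime $y<u_n/2$ requires $T_1T_2\gtrsim u_n/\sqrt p$, which has probability $\le \exp(-c\,p\cdot u_n/\sqrt p)=\exp(-c\sqrt{p\log n})=o(n^{-3})$ by \eqref{eq:fvml-subgaussian}, since $\sqrt{p\log n}\gg(\log n)^{3/2}$. The remaining cross-region is where I expect the bookkeeping to be most delicate; it is handled by splitting according to $|T_1T_2|$ levels and again using \eqref{eq:sup-moment-product} with $q$ slightly above $1$.
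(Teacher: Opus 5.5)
Your proposal is correct and follows essentially the paper's route: condition on $(T_1,T_2)$, show $\Psi_{p-1}(y_n)=(1+o(1))\,\Psi_{p-1}(u_n)\,e^{u_n\sqrt p\,T_1T_2}$ uniformly on a high-probability event, and control the complement through the domination $\Psi_{p-1}(y_n)\lesssim \Psi_{p-1}(u_n)e^{u_n\sqrt p\,|T_1T_2|}$ together with uniform integrability from \eqref{eq:sup-moment-product}. Your split into $y_n\ge u_n/2$ and $y_n<u_n/2$ is exactly the paper's proof of \eqref{eq:right-domination}, except that you dispose of the small-$y_n$ case with the sub-Gaussian bound \eqref{eq:fvml-subgaussian} rather than a deterministic inequality.

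The only thing to fix is the first constraint in $G_n$. It is mis-scaled: on its own, $|T_1T_2|\le \eta_n\sqrt{\log n/p}$ allows $p(T_1T_2)^2$ as large as $\eta_n^2\log n$. It is, however, harmless. The constraint $|T_i|\le C\sqrt{\log n/p}+2m$ already forces $\sqrt p\,|T_1T_2|\lesssim \log n/\sqrt p+\tau^2/\sqrt{\log n}\to0$ uniformly, and this is what actually makes $p(T_1T_2)^2$ and $|y_n-u_n|$ uniformly small. Also, after the $y_n\ge u_n/2$ versus $y_n<u_n/2$ split there is no leftover cross-region, so no further splitting in $|T_1T_2|$ is needed.
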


\noindent \textbf{Proof of Proposition \ref{lem:fvml-onepair}.}
By rotation invariance, we may assume \(\bm\mu=\bm e_1\). Using
\eqref{representation}, write
\[
 \bm X_i=\left(T_i,\sqrt{1-T_i^2}\,\bm V_i\right),
 \qquad i=1,2,
\]
where \(\bm V_1,\bm V_2\) are independent and uniform on
\(\mb S^{p-2}\), independent of \(T_1,T_2\). Put
\[
 d:=p-1,
 \qquad
 Z:=\bm V_1^\top \bm V_2.
\]
Then \(Z\) has the same distribution as \(Z_d\) in \eqref{Psi}, and
\[
 \bm X_1^\top \bm X_2
 =
 T_1T_2+
 \sqrt{(1-T_1^2)(1-T_2^2)}\,Z.
\]
For readability, write
\[
 A_n:=\sqrt{(1-T_1^2)(1-T_2^2)},
 \qquad
 B_n:=T_1T_2.
\]
Thus $\bm X_1^\top \bm X_2=A_n Z+B_n$. 

{\it \underline{Step 1: Linearization.}} We will show that
\begin{equation}\label{eq:Y-plus-expansion}
 \sqrt d\cdot \frac{t_n-B_n}{A_n}
 =
 u_n-\sqrt p\,B_n + r_{+,n},
 \qquad
 u_n|r_{+,n}|\to0
 \quad\text{in probability}.
\end{equation}
We will also show that
\begin{equation}\label{eq:Y-minus-expansion}
 \sqrt d\cdot \frac{t_n+B_n}{A_n}
 =
 u_n+\sqrt p\,B_n +r_{-,n},
 \qquad
 u_n|r_{-,n}|\to0
 \quad\text{in probability}.
\end{equation}

By Proposition \ref{Herbst},
\[
 m=\frac{\kappa}{p}(1+o(1)),
 \qquad
 T_i=m+O_{\mb P}(p^{-1/2}).
\]
Therefore
\[
 T_i^2
 =
 O_{\mb P}\left(m^2+\frac1p\right)
 =
 O_{\mb P}\left(\frac{1}{\sqrt{p\log n}}+\frac1p\right).
\]
In particular \(T_i^2=o_{\mb P}(1)\), and hence we can write
\[
 A_n^{-1}
 =
 1+O_{\mb P}(T_1^2+T_2^2).
\]
Also, by Proposition \ref{prop:exponential moment}, $u_n\sqrt p\,B_n=u_n\sqrt p\,T_1T_2$
is tight. Since \(d=p-1\) and \(t_n=u_n/\sqrt p\), we have
\[
 \sqrt d\,t_n
 =
 u_n\sqrt{\frac{p-1}{p}}
 =
 u_n+O\left(\frac{u_n}{p}\right).
\]
Consequently,
\[
\begin{aligned}
u_n \left[  \sqrt d\cdot \frac{t_n-B_n}{A_n}-(u_n-\sqrt p\,B_n) \right]
&=
u_n \left[ (\sqrt d\,t_n-u_n)-(\sqrt d-\sqrt p)B_n
  +\sqrt d\,(t_n-B_n)(A_n^{-1}-1) \right].
\end{aligned}
\]
Multiplying by \(u_n\), the first term is \(O(u_n^2/p)=o(1)\), the
second is \(o_{\mb P}(1)\) by tightness of \(u_n\sqrt p\,B_n\), and the
last term is bounded by
\[
 O_{\mb P}\lb u_n^2(T_1^2+T_2^2)\rb
 =
 O_{\mb P}\left(\sqrt{\frac{\log n}{p}}+\frac{\log n}{p}\right)
 =
 o_{\mb P}(1).
\]
Thus \eqref{eq:Y-plus-expansion} holds. The proof of
\eqref{eq:Y-minus-expansion} is identical, with \(B_n\) replaced by
\(-B_n\).

{\it \underline{Step 2: Right-tail asymptotic}.} Conditioning on \(T_1,T_2\) and using \eqref{eq:Y-plus-expansion}, we obtain
\begin{align*}
 \mb P_{\kappa_n}\left(\bm X_1^\top\bm X_2>t_n\,\middle|\,T_1,T_2\right)
 &=
 \mb P(A_nZ+B_n>t_n\,|\,T_1,T_2) \\
 &= \Psi_d\left(\sqrt d\cdot \frac{t_n-B_n}{A_n}\right) \\
 &=  \Psi_d\left(u_n-\sqrt p\,B_n +r_{+,n}\right).
\end{align*}
We claim that
\begin{equation}\label{eq:right-tail-L1}
 \mb E
 \left|
 \frac{
 \Psi_d\left(u_n-\sqrt p\,B_n+r_{+,n}\right)
 }{\Psi_d(u_n)}
 -
 e^{u_n\sqrt p\,B_n}
 \right|
 \to 0.
\end{equation}
To see this, fix \(0 < M<\infty\) and \(\varepsilon>0\), and consider the event
\[
\mc A_n:=  \left\{ |u_n\sqrt p\,B_n|\le M,\ u_n|r_{+,n}|\le\varepsilon \right\}.
\]
On $\mc A_n$, by applying \eqref{eq:Psi-local-ratio} with $a=\sqrt p\,B_n-r_{+,n}$, we obtain
\[
 \frac{
 \Psi_d(u_n-\sqrt p\,B_n+r_{+,n})
 }{\Psi_d(u_n)} \cdot \mathbf{1}_{\mc A_n}
 =
 \exp\left\{u_n(\sqrt p\,B_n-r_{+,n})
 -\frac{(\sqrt p\,B_n-r_{+,n})^2}{2}\right\}(1+o(1)) \cdot \mathbf{1}_{\mc A_n}.
\]
Since \(|\sqrt p\,B_n-r_{+,n}| \leq (M+\ve)/u_n\) on $\mc A_n$, we may write
\[
\frac{
 \Psi_d(u_n-\sqrt p\,B_n+r_{+,n})
 }{\Psi_d(u_n)} \cdot \mathbf{1}_{\mc A_n}
 =  e^{u_n\sqrt p\,B_n} \cdot \left[ 1+ \delta_n(M,\ve) + \widetilde{\delta}_n(\ve) \right] \cdot \mathbf{1}_{\mc A_n}
\]
where $\la \delta_n(M,\ve); n \geq 1 \ra$ and $\la \  \widetilde{\delta}_n(\ve); n \geq 1 \ra$ are possibly random sequences satisfying 
\begin{align*}
|\delta_n(M,\ve)| &\leq \gamma_n(M,\ve) \qquad \text{for every fixed $M,\ve$}, \\
 |\widetilde{\delta}_n(\ve)| &\leq C \ve \qquad  \text{for a universal constant C}.
\end{align*}
Here $\gamma_n(M,\ve)$ is a positive, deterministic sequence such that 
\[
\gamma_n(M,\ve) \to 0 \qquad  \text{for every fixed $M,\ve$}.
\]
Now write
\begin{align*}
    \limsup_{n \to \infty} \Big\{ \text{LHS of \eqref{eq:right-tail-L1}} \Big\}
 &\leq \limsup_{n \to \infty} \la  \mb{E} \left[ e^{u_n \sqrt{p} B_n} \cdot \mathbf{1}_{\mc A^c_n} \right] + \gamma_n(M,\ve) \cdot \mb{E} \left[ e^{u_n \sqrt{p} B_n} \cdot \mathbf{1}_{\mc A_n} \right] 
 \ra \\
 &+  \limsup_{n \to \infty} \la \,  \widetilde{\delta}_n(\ve) \cdot \mb{E} \left[ e^{u_n \sqrt{p} B_n} \cdot \mathbf{1}_{\mc A_n} \right]  \ra  \\
 & + \limsup_{n \to \infty} \la  \mb E \left[ \frac{
 \Psi_d(u_n-\sqrt p\,B_n+r_{+,n})
 }{\Psi_d(u_n)} \cdot \mathbf{1}_{\mc A^c_n}  \right] \ra.
\end{align*}
Since $\gamma_n(M,\ve) \to 0$ and $ \widetilde{\delta}_n(\ve) \leq C \ve$, we deduce that
\begin{align*}
        \limsup_{n \to \infty} \Big\{ \text{LHS of \eqref{eq:right-tail-L1}} \Big\} &\leq   \limsup_{n \to \infty} \la  \mb{E} \left[ e^{u_n \sqrt{p} B_n} \cdot \mathbf{1}_{\mc A^c_n} \right] \ra 
        +  \limsup_{n \to \infty} \la  \mb E \left[ \frac{
 \Psi_d(u_n-\sqrt p\,B_n+r_{+,n})
 }{\Psi_d(u_n)} \cdot \mathbf{1}_{\mc A^c_n}  \right] \ra \\ 
 &+ C\ve \cdot \limsup_{n \to \infty} \mb{E} \left[ e^{u_n \sqrt{p} B_n}  \right] .
\end{align*}
Furthermore, \eqref{eq:right-domination} in Lemma \ref{lem:spherical-tail} yields
\[
\frac{
 \Psi_d(u_n-\sqrt p\,B_n+r_{+,n})
 }{\Psi_d(u_n)} 
 = \frac{
 \Psi_d\left(\sqrt d\cdot \frac{t_n-B_n}{A_n}\right)
 }{\Psi_d(u_n)}
 =
 \frac{\mb P(A_nZ+B_n>t_n\,|\,T_1,T_2)}{\Psi_d(u_n)}
 \lesssim
  e^{u_n\sqrt p\,B_n}.
\]
Thus, by \eqref{eq:fvml-product-mgf-strong},
\begin{align} \label{exponential uniform integrable}
            \limsup_{n \to \infty} \Big\{ \text{LHS of \eqref{eq:right-tail-L1}} \Big\} &\lesssim   \limsup_{n \to \infty} \la  \mb{E} \left[ e^{u_n \sqrt{p} B_n} \cdot \mathbf{1}_{\mc A^c_n} \right] \ra + O \lb e^{\tau^2} \ve \rb.
\end{align}
To bound the last term, note that, for every fixed $\ve>0$,
\begin{align} \label{A_n^c negligible}
\lim_{M \to \infty} \la \limsup_{n \to \infty} \mb{P} \lb \mc A^c_n \rb  \ra = 0  
\end{align}
due to \eqref{eq:Y-plus-expansion} and 
\[
\sup_{n \geq 1} \mb{E} \lb u_n^2 \cdot p \cdot B_n^2 \rb < \infty.
\]
Furthermore, by \eqref{eq:sup-moment-product} from Proposition \ref{prop:exponential moment}, the family \( \la e^{u_n\sqrt p\,B_n}; n \geq 1 \ra\) is uniformly integrable. This fact, \eqref{exponential uniform integrable}, and \eqref{A_n^c negligible} yield \eqref{eq:right-tail-L1} after first sending $M \to \infty$ and then $\ve \to 0$.

Next, observe that
\[
\begin{aligned}
 \mb P_{\kappa_n}\left(\bm X_1^\top\bm X_2>t_n\right)
 &=
 \mb E
 \left[ \Psi_d\left(\sqrt d\cdot \frac{t_n-B_n}{A_n}\right) \right] \\
 &=
 \Psi_d(u_n)\cdot
 \mb E e^{u_n\sqrt p\,B_n}
 +o\lb \Psi_d(u_n)\rb.
\end{aligned}
\]
Since \(B=T_1T_2\), Proposition \ref{prop:exponential moment} gives $\mb E e^{u_n\sqrt p\,B_n} \to e^{2\tau^2}$.
Also, by \eqref{eq:Psi-main},
\[
 \Psi_d(u_n)
 =
 \frac{1+o(1)}{2\sqrt{2\pi}}n^{-2}e^{-x/2}.
\]
Hence, we have \eqref{eq:fvml-right}.

{\it  \underline{Step 3: Left-tail asymptotic.}} 
The left-tail asymptotic can be proved by the same argument.  Indeed, by
symmetry of \(Z\), conditionally on \(T_1,T_2\),
\[
\begin{aligned}
 \mb P_{\kappa_n}\left(\bm X_1^\top\bm X_2<-t_n\,\middle|\,T_1,T_2\right)
 &=
 \mb P(A_nZ+B_n<-t_n\,|\,T_1,T_2) =
 \mb P(A_nZ-B_n>t_n\,|\,T_1,T_2).
\end{aligned}
\]
Thus the preceding right-tail argument applies with \(B_n\) replaced by
\(-B_n\). One can employ \eqref{eq:Y-minus-expansion} and repeat Step 2 above.
\(\hfill\square\)

\subsection{Poisson approximation} \label{sec:Poisson-approx-fvml}

We are now ready to prove Theorem \ref{thm: FvML}. Without loss of generality, we may assume $\bm \mu = \bm e_1$ in what follows. Recall $I_{ij}$ from \eqref{I-W}. Let
\[
\mathcal I_n:=\la (i,j):1\le i<j\le n\ra.
\]
For $\alpha=(i,j)\in\mathcal I_n$, let
\[
N_\alpha:=\la (k,l)\in\mathcal I_n:\la i,j\ra\cap\la k,l\ra\ne\emptyset\ra.
\]
Then $|N_\alpha|\le 2n$. If $\beta\notin N_\alpha$, then the events $I_\alpha$ and $I_\beta$  are independent.  Recall $M_n$ in \eqref{S- M}.
Lemma \ref{lem:AGG} then gives
\begin{align*}
    \left| \mb{P} \lb M_n \leq t_n  \rb - \exp \lb -\frac{n(n-1)}{2} \cdot  \mb{P} \lb I_{12} \rb \rb \, \right| &\leq b_{1n} + b_{2n}
\end{align*}
where $t_n$ is as in \eqref{u-t} and 
\begin{align*}
    b_{1n} &\lesssim n^3 \cdot  \mb{P} \lb I_{12} \rb^2, \qquad b_{2n} \lesssim n^3 \cdot \mb{P} \lb I_{12} \cap I_{13} \rb.
\end{align*}
By Proposition \ref{lem:fvml-onepair}, we have 
\[
\frac{n(n-1)}{2} \cdot  \mb{P} \lb I_{12} \rb \to \frac{e^{-x/2}}{4\sqrt{2 \pi }} \cdot \lb e^{2\tau^2} + e^{-2\tau^2} \rb = \frac{e^{-x/2}}{2\sqrt{2 \pi}} \cdot \cosh \lb 2\tau^2 \rb.
\]
Thus to finish the proof, we only need to check that $b_{1n} \to 0$ and $b_{2n} \to 0$. The first statement is obvious since $\mb{P}(I_{12})=O(n^{-2})$. 

Let us show that $b_{2n} \to 0$. Define
\[
\eta_n(\bm x):= \mb P_{\kappa_n, \bm e_1} \lb |\bm x^\top \bm X| \geq t_n \rb
\]
where $\bm X$ has the FvML distribution \eqref{class fvml} with concentration $\kappa_n$ and location $\bm e_1$.

Conditioning on $\bm X_1$, we obtain
\[
n^3 \cdot \mb{P} \lb I_{12} \cap I_{13} \rb = n^3 \cdot \mb E \lb \eta_n \lb \bm{X}_1 \rb^2 \rb
\]
To finish the proof, it suffices to show that
\begin{align} \label{Poisson approx fvml}
\mb E \lb \eta_n \lb \bm{X}_1 \rb^2 \rb = O(n^{-4}).
\end{align}

{\it \underline{Step 1: Explicit representation of $\eta_n(\bm x)$}.}
We will show that
\begin{align} \label{eta-integral}
\eta_n \lb \bm{x} \rb = \intop_{|r|>t_n} \exp \lb \kappa_n x_1 r \rb  \cdot \frac{M_{p-1} \lb \kappa_n \cdot \sqrt{1- x_1^2}\cdot \sqrt{1-r^2} \rb}{M_p(\kappa_n)} \cdot f_p(r) \, dr
\end{align}
where $x_1$ is the first coordinate of $\bm x$, $M_p$ is the mgf in Lemma \ref{lem:spherical-mgf}, and $f_p$ is the density in \eqref{eq:spherical-density}. 

To show the above, observe that the FvML density can be written as
\[
 d\mb P_{\kappa_n,\bm e_1}(\bm u)
 =
 \frac{e^{\kappa_n u_1}}{M_p(\kappa_n)}\,d\sigma_{p-1}(u)
\]
where $\sigma_{p-1}:=\mbox{Unif} \lb \mb S^{p-1} \rb$.

Thus, for fixed \(\bm x\in\mb S^{p-1}\),
\begin{align} \label{eta-1}
 \eta_n(\bm x)
 =
 \frac{
 \mb E_0\left[
 \exp(\kappa_n u_1)
 \cdot \mathbf{1}_{\la |\bm x^\top \bm U|>t_n \ra }
 \right]
 }{
 M_p(\kappa_n)
 }
\end{align}
where \(\mb E_0\) denotes expectation under $\bm U=(u_1,\dots,u_p) \sim \mbox{Unif} \lb \mb S^{p-1} \rb$.

Set $\mc R:= \bm x^\top \bm U$. It is easy to check that $\mc R$ has density $f_p$.  Conditioning on \(R=r\), we may write
\[
 \bm U=r\bm x+\sqrt{1-r^2}\,\bm W,
\]
where \(\bm W\) is uniform on \(\mb S^{p-2}\subset \bm x^\perp\). Decompose
\[
 \bm e_1=x_1\bm x+\sqrt{1-x_1^2}\,\bm \xi,
\]
where \(\bm \xi\in \bm x^\perp\) is a unit vector. Then
\[
 u_1
 =
 \bm e_1^\top \bm U
 =
 x_1r+\sqrt{1-x_1^2}\sqrt{1-r^2}\cdot \bm \xi^\top \bm W.
\]
 Hence, we obtain from \eqref{eta-1} that
\begin{align*}
\eta_n(\bm x)
 &= \frac{1}{M_p(\kappa_n)} \cdot \intop_{|r|>t_n} \mb{E}_{\bm W} \left[ \exp \la \kappa_n x_1 R + \kappa_n \cdot \sqrt{1-x_1^2} \cdot \sqrt{1-R^2} \cdot \bm \xi^\top \bm W \ra \Big| \mc R=r \right] \cdot f_p(r) \, dr  \\
 &= \frac{1}{M_p(\kappa_n)} \cdot \intop_{|r|>t_n} e^{\kappa_n x_1 r} \cdot \mb{E}_{\bm W}  \exp \left[  \kappa_n \cdot \sqrt{1-x_1^2} \cdot \sqrt{1-R^2} \cdot \bm \xi^\top \bm W \Big| \mc R=r \right] \cdot f_p(r) \, dr  \\
 &=\int_{|r|>t_n}
 \exp(\kappa_n x_1r) \cdot
 \frac{
 M_{p-1}\!\left(
 \kappa_n\sqrt{1-x_1^2}\sqrt{1-r^2}
 \right)
 }{
 M_p(\kappa_n)
 }
 \cdot f_p(r)\,dr 
\end{align*}
where the last line follows from the fact that the random variable
\(\bm \xi^\top \bm W\) has the first-coordinate distribution of a uniform point on
\(\mb S^{p-2}\) (by rotational invariance).

{\it \underline{Step 2: Bounding the integral \eqref{eta-integral}.}} 
By Lemma \ref{lem:spherical-mgf},
\[
 \log M_p(\kappa_n)
 =
 \frac{\kappa_n^2}{2p}
 +
 O\left(\frac{\kappa_n^4}{p^3}\right),
\]
and
\[
 \log M_{p-1}(\kappa_n)
 =
 \frac{\kappa_n^2}{2(p-1)}
 +
 O\left(\frac{\kappa_n^4}{p^3}\right).
\]
In the present regime,
\[
 \frac{\kappa_n^4}{p^3}
 =
 O\left(\frac1{\log n}\right)
 =
 o(1),
\]
and
\[
 \kappa_n^2\left(\frac1{p-1}-\frac1p\right)
 =
 O\left(\frac{\kappa_n^2}{p^2}\right)
 =
 o(1).
\]
Therefore, for some constant $C$ depending only on $\tau$,
\begin{align}\label{eq:Mp-ratio-bound}
 \sup_{x_1,r\in[-1,1]}
 \frac{
 M_{p-1}\!\left(
 \kappa_n\sqrt{1-x_1^2}\sqrt{1-r^2}
 \right)
 }{
 M_p(\kappa_n)
 } 
 &\leq 
 \sup_{x_1,r\in[-1,1]}
 \frac{
 M_{p-1}\!\left(
 \kappa_n 
 \right)
 }{
 M_p(\kappa_n)
 }  \leq C
\end{align}
where the first inequality follows from the fact that \(M_{p-1}(a)\) is increasing in \(|a|\).

Combining \eqref{eta-integral} and \eqref{eq:Mp-ratio-bound}, we obtain
\begin{equation}\label{eq:eta-basic-bound}
 \eta_n(\bm x)
 \lesssim
 \int_{|r|>t_n} e^{\kappa_n x_1r} \cdot f_p(r)\,dr
 \le
 2\int_{t_n}^1 e^{\kappa_n|x_1|r} \cdot f_p(r)\,dr.
\end{equation}
Define
\[
 x_\star
 :=
 \min\left\{1,\frac{pt_n}{4\kappa_n}\right\}
 =
 \min\left\{1,\frac{u_n\sqrt p}{4\kappa_n}\right\}.
\]
Observe that for $r \geq t_n$ and $|x_1| \leq x_\star$,
\begin{align*}
\frac{d}{dr} \left[ \log \lb e^{\kappa_n |x_1| r} \cdot f_p(r) \rb \right] &= \frac{d}{dr} \lb \kappa_n |x_1|r + \frac{p-3}{2} \cdot \log(1-r^2)  \rb \\
&= \kappa_n |x_1| -\frac{(p-3)r}{1-r^2}\le-\frac{pt_n}{4}
\end{align*}
for all sufficiently large $n$.

Integrating the above yields
\[
\log \lb e^{\kappa_n |x_1| r} \cdot f_p(r) \rb \leq \log \lb e^{\kappa_n |x_1| t_n} \cdot f_p(t_n) \rb
- \frac{pt_n}{4}(r-t_n).
\]

Thus
\begin{align*}
\eta_n(\bm x) \cdot \mathbf{1}_{\la |x_1| \leq x_\star \ra} &\lesssim  \mathbf{1}_{\la |x_1| \leq x_\star \ra} \cdot \int_{t_n}^1 e^{\kappa_n|x_1|r} \cdot f_p(r)\,dr \\
&\leq  e^{\kappa_n |x_1| t_n} \cdot f_p(t_n) \cdot  \mathbf{1}_{\la |x_1| \leq x_\star \ra} \cdot \int_{t_n}^1 \exp \left[ - \frac{p t_n}{4} \cdot (r-t_n) \right] \, dr \\
&\lesssim e^{\kappa_n|x_1|t_n} \cdot \sqrt{p} \cdot (1-t_n^2)^{(p-3)/2} \cdot \frac{1}{pt_n} \\
&= \frac{ e^{\kappa_n|x_1|t_n}}{u_n} \cdot (1-t_n^2)^{(p-3)/2} \lesssim \frac{\exp \lb \kappa_n |x_1| t_n - \frac{u_n^2}{2} \rb}{u_n}.
\end{align*}

Consequently, with $\bm X_1=(x_1,\dots,x_p)$, we have
\begin{align*}
    \mb E \lb \eta_n \lb \bm{X}_1 \rb^2 \rb &= \mb E \lb \eta_n \lb \bm{X}_1 \rb^2 \cdot \mathbf{1}_{\la  |x_1| \leq x_\star \ra} \rb + \mb E \lb \eta_n \lb \bm{X}_1 \rb^2 \cdot \mathbf{1}_{\la  |x_1| > x_\star \ra} \rb \cdot  \\
    &\lesssim \frac{e^{-u_n^2}}{u_n^2} \cdot \mb{E}_{\kappa_n, \bm e_1} \lb e^{2\kappa_n |x_1| t_n} \rb + \mb{P}_{\kappa_n,\bm e_1} \lb |x_1|> \frac{pt_n}{4 \kappa_n}  \rb \\
    &\leq \frac{1}{n^4} \cdot \left[ \mb{E}_{\kappa_n, \bm e_1} \lb e^{2\kappa_n x_1 t_n} \rb + \mb{E}_{\kappa_n, \bm e_1} \lb e^{-2\kappa_n x_1 t_n} \rb\right]+ \mb{P}_{\kappa_n,\bm e_1} \lb |x_1|> \frac{pt_n}{4 \kappa_n}  \rb.
\end{align*}
To bound the last display, note that Proposition \ref{Herbst} yields
\begin{align*}
     \mb{E}_{\kappa_n, \bm e_1} \lb e^{2\kappa_n x_1 t_n} \rb + \mb{E}_{\kappa_n, \bm e_1} \lb e^{-2\kappa_n x_1 t_n} \rb
     &= e^{ 2\kappa_n t_n \mb{E} x_1} \cdot      \mb{E}_{\kappa_n, \bm e_1} \left[ e^{2\kappa_n \lb x_1 -\mb Ex_1 \rb t_n} \right]
     +e^{-2\kappa_n t_n \mb{E} x_1} \cdot      \mb{E}_{\kappa_n, \bm e_1} \left[ e^{-2\kappa_n \lb x_1 -\mb Ex_1 \rb t_n} \right]\\
     &= e^{2\kappa_n t_n \mb{E} x_1} \cdot \exp \left[ O \lb \frac{\kappa_n^2 t_n^2}{p} \rb \right] +  e^{-2\kappa_n t_n \mb{E} x_1} \cdot \exp \left[ O \lb \frac{\kappa_n^2 t_n^2}{p} \rb \right]
\end{align*}
Again, by Proposition \ref{Herbst}, we have $\mb E x_1 = \mb E_{\kappa_n, \bm e_1} x_1 = (\kappa_n/p)(1+o(1))$, hence
\begin{align*}
2\kappa_n t_n \mb{E} x_1 &= \frac{2 \kappa_n^2 t_n}{p} \cdot (1+o(1)) = \frac{4 \kappa_n^2 \sqrt{\log n}}{p^{3/2}} \cdot (1+o(1))
=4 \tau^2(1+o(1)) \\
\frac{\kappa_n^2 t_n^2}{p} &= O \lb \frac{\sqrt{\log n}}{\sqrt{p}} \rb =o(1).
\end{align*}
Thus
\[
\sup_{n \geq 1} \la      \mb{E}_{\kappa_n, \bm e_1} \lb e^{2\kappa_n x_1 t_n} \rb + \mb{E}_{\kappa_n, \bm e_1} \lb e^{-2\kappa_n x_1 t_n} \rb \ra < \infty.
\]
Regarding the second term, we use the sub-Gaussian tail bound (which follows from Proposition \ref{Herbst})
\begin{align*}
    \mb{P}_{\kappa_n,\bm e_1} \lb |x_1|> \frac{pt_n}{4 \kappa_n}  \rb &\leq  \mb{P}_{\kappa_n,\bm e_1} \lb \Big|x_1 - \mb{E}_{\kappa_n, \bm e_1} x_1 \Big|> \frac{pt_n}{4 \kappa_n} - \mb{E}_{\kappa_n, \bm e_1} x_1 \rb  \\
    &\leq 2\exp \left[ -\Omega(p) \cdot \lb \frac{pt_n}{4 \kappa_n} - \mb{E}_{\kappa_n, \bm e_1} x_1 \rb^2 \right].
\end{align*}
To finish the proof, note that
\begin{align*}
\frac{pt_n}{4 \kappa_n} - \mb{E}_{\kappa_n, \bm e_1} x_1 = \frac{pt_n}{4 \kappa_n} - \frac{\kappa}{p}(1+o(1)) 
&= \Omega \lb \frac{\lb \log n \rb^{3/4}}{p^{1/4}} \rb,
\end{align*}
which yields
\[
 \mb{P}_{\kappa_n,\bm e_1} \lb |x_1|> \frac{pt_n}{4 \kappa_n}  \rb \leq \exp -\lb \Omega \lb \sqrt{p} \cdot \log n \rb \rb = O(n^{-5}).
\]
Consequently, we obtain \eqref{Poisson approx fvml}. This completes the proof. $\hfill$ $\square$

\section{Proof of Theorem \ref{thm: Watson}} \label{sec:proof-watson-1}

\subsection{Preliminaries}

We start with a fact regarding the tangent--normal decomposition in the Watson model. As in \eqref{representation}, if $\bm X$ has the Watson distribution as in \eqref{class watson}, we have the representation 
 \begin{align} \label{representation2}
 \bm X  \stackrel{d}{=} \lb \widetilde{T}, \sqrt{1-\widetilde{T}^2} \cdot \bm V \rb 
 \end{align}
where $\bm V \sim \mbox{Unif} \lb \mb S^{p-2} \rb$, and $\wT $ is the cosine component supported on $[-1,1]$, independent of $\bm V$, with density
\begin{align} \label{wT}
\widetilde{f}_T(t) \propto e^{\kappa t^2} \cdot \lb 1-t^2 \rb^{(p-3)/2}.
\end{align}

Let us first collect some facts about the cosine component $\wT$.   

\begin{prop} \label{prop:local-watson-mgf}
    Suppose $\la \bm X_n; n \geq 1 \ra$ has the Watson distribution on $\mb S^{p-1}$ as in \eqref{class watson} with concentration parameter $\kappa_n$. The following statements hold in the local regime of Theorem \ref{thm: Watson}.
\begin{enumerate}[label=(\roman*)]
    \item  $\sqrt{p-2\kappa_n} \cdot \wT_n \stackrel{d}{\to} N(0,1)$, where $\wT_n$ is the cosine component of $\bm X_n$ as in \eqref{representation2}.
    \item If $\wT_1$ and $\wT_2$ are two independent copies of $\wT_n$ (the subscript $n$ is dropped for the sake of presentation), then 
  \begin{equation}\label{eq:watson-product-mgf}
 \mb E\exp\lb \pm u_n \sqrt{p} \cdot \wT_1 \wT_2 \rb \to(1-\rho^{-2})^{-1/2}.
\end{equation}
Moreover, for any fixed $\delta \in (0,\rho-1)$,
\begin{equation}\label{eq:watson-product-ui}
 \sup_n\mb E\exp\left[ (1+\delta) \cdot u_n \sqrt{p} \cdot |\wT_1 \wT_2| \right] <\infty.
\end{equation}

\end{enumerate}

\end{prop}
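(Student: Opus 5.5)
The plan is to work directly with the density \eqref{wT} in the rescaled variable $s=\sqrt{p-2\kappa_n}\,t$. Set $\sigma_n^2:=p-2\kappa_n=2\Delta_n$, so that $\sigma_n^2=2\rho_n\sqrt{p\log n}\to\infty$ and $\sigma_n^2/p\to0$. Write $e^{\kappa t^2}(1-t^2)^{(p-3)/2}=\exp\{\kappa t^2+\frac{p-3}{2}\log(1-t^2)\}$ and expand the logarithm:
\[
\kappa t^2+\tfrac{p-3}{2}\log(1-t^2)=-\tfrac{p-2\kappa-3}{2}t^2-\tfrac{p-3}{4}t^4-\dots
\]
This exponent is $\le -\frac{\sigma_n^2-3}{2}t^2-\frac{p-3}{4}t^4$ for all $t\in(-1,1)$, since every further term of the series is negative. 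In the variable $s$ the density is therefore proportional to
\[
\exp\Big\{-\tfrac{s^2}{2}(1+o(1))-\tfrac{p}{4\sigma_n^4}s^4(1+o(1))\Big\},
\qquad \tfrac{p}{\sigma_n^4}=\tfrac{1}{4\rho_n^2\log n}\to0 .
\]
It is also dominated by $C\exp\{-(1-\epsilon)s^2/2\}$ uniformly in $n$. Pointwise convergence of the density to the Gaussian kernel, combined with this domination and Scheffé/dominated convergence (for both the normalizing constant and the density), gives (i).

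For (ii), write $Y_i=\sigma_n\wT_i$, so that $u_n\sqrt p\,\wT_1\wT_2=\lambda_n Y_1Y_2$ with $\lambda_n:=u_n\sqrt p/\sigma_n^2\to 2/(2\rho)=\rho^{-1}<1$. This uses $u_n\sim2\sqrt{\log n}$ and $\sigma_n^2=2\rho_n\sqrt{p\log n}$. The \emph{key step} is a uniform Gaussian-type bound $\mb E e^{aY^2}\le C_a$ for each fixed $a<1/2$, uniformly in $n$. This follows from the domination above: the density of $Y$ is at most $C\exp\{-(1-\epsilon)s^2/2\}$, with the normalizing constant bounded below by (i) or by a direct lower bound on $[-1,1]$. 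Then I condition on $Y_2$ and use $\mb E[e^{\mu Y_1}\mid Y_2]\le C\,e^{\mu^2/(2(1-\epsilon))}$, which again comes from the domination by a Gaussian of variance $(1-\epsilon)^{-1}$. This gives
\[
\mb E e^{q\lambda_n|Y_1Y_2|}\le 2C\,\mb E\exp\Big\{\tfrac{q^2\lambda_n^2}{2(1-\epsilon)}Y_2^2\Big\}.
\]
This bound is finite uniformly in $n$ as soon as $q^2\lambda_n^2/(1-\epsilon)<1-\epsilon$, that is, for $q\lambda_n<1-\epsilon$. With $q=1+\delta$ and $\delta<\rho-1$ we have $q\rho^{-1}<1$, so choosing $\epsilon$ small gives \eqref{eq:watson-product-ui}. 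Strictly, I need $(1+\delta)/\rho<1$, which is exactly the assumption on $\delta$.

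Finally, for \eqref{eq:watson-product-mgf}, part (i) and independence give $\lambda_nY_1Y_2\stackrel{d}{\to}\rho^{-1}Z_1Z_2$. The family $e^{\pm\lambda_nY_1Y_2}$ is uniformly integrable by \eqref{eq:watson-product-ui} with exponent $1+\delta>1$. Vitali's theorem then yields convergence of the expectations to $\mb E e^{\rho^{-1}Z_1Z_2}=(1-\rho^{-2})^{-1/2}$, using the identity recalled in the proof-idea section, and the sign is irrelevant by symmetry.

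The main obstacle is the uniform domination of the rescaled density by a Gaussian with variance arbitrarily close to $1$, including a lower bound on the normalizing constant. Without that variance control the threshold $\lambda<1$ in the uniform-integrability estimate cannot be reached, because the margin $q/\rho<1$ is tight.
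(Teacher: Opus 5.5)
Your proposal is correct and follows essentially the paper's argument. For (i), both use Gaussian domination of the rescaled density together with dominated convergence. For (ii), both use a uniform bound on $\mathbb{E}\,e^{aY^2}$ for fixed $a<1/2$ to get uniform integrability, and then Vitali's theorem. The only difference is cosmetic: you bound $\mathbb{E}\,e^{q\lambda_n|Y_1Y_2|}$ by conditioning on $Y_2$ and applying a Gaussian mgf bound to $Y_1$, whereas the paper uses $|xy|\le (x^2+y^2)/2$ and squares a single-coordinate bound; both give the same sharp threshold $(1+\delta)/\rho<1$.
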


\noindent \textbf{Proof of Proposition \ref{prop:local-watson-mgf}.}

{\it \underline{Proof of (i)}.} To prove (i), note that the unnormalized density of $\sqrt{p-2\kappa_n} \cdot \wT_n$ has the form
\begin{align*}
& \exp \left[ \frac{\kappa_n}{2 \Delta_n} \cdot t^2 + \frac{p-3}{2}\cdot \log \lb 1 - \frac{t^2}{2 \Delta_n} \rb \right] \cdot \mathbf{1}_{\la |t| \leq \sqrt{2 \Delta_n} \ra} \\
= & \exp \left[ \frac{\kappa_n}{2\Delta_n} \cdot t^2 - \frac{p-3}{2} \cdot \frac{t^2}{2 \Delta_n}   - \frac{p-3}{16\Delta_n^2} \cdot t^4 + O \lb \frac{p}{\Delta_n^3} \rb \right] \cdot \mathbf{1}_{\la |t| \leq \sqrt{2\Delta_n} \ra} \\
= & \exp \left[ -\frac{t^2}{2} + o(1)   \right] \cdot \mathbf{1}_{\la |t| \leq \sqrt{2\Delta_n} \ra}
\end{align*}
where the $o(1)$ term is a sequence of functions that converges to zero uniformly on compact intervals.

Moreover, we have the upper bound 
\[
\exp \left[ \frac{\kappa_n}{p-2\kappa_n} \cdot t^2 + \frac{p-3}{2}\cdot \log \lb 1 - \frac{t^2}{p-2\kappa_n} \rb \right]
 \le \exp \lb -\frac{p-3-2\kappa_n}{2(p-2\kappa_n)} \cdot t^2-\frac{p-3}{4\lb p -2\kappa_n \rb^2} \cdot t^4 \rb.
\]
Therefore, the dominated convergence theorem gives (i). 

{\it \underline{Proof of (ii)}.} Observe that $\lb u_n \sqrt{p}/(p-2\kappa_n)  \rb \to \rho^{-1}$. Moreover, by part (i),
\[
\left(
\sqrt{p-2\kappa_n} \cdot \wT_1,
\sqrt{p-2\kappa_n} \cdot \wT_2
\right)
\stackrel{d}{\to} (Z_1,Z_2),
\]
where \(Z_1,Z_2\) are independent standard normal random variables. Hence
\[
u_n\sqrt p\,\wT_1\wT_2
\stackrel{d}{\to}
\rho^{-1} \cdot Z_1Z_2.
\]
Suppose \eqref{eq:watson-product-ui} has been proved. Then the preceding display, \eqref{eq:watson-product-ui}, and Vitali's theorem give
\[
\mb E\exp\left\{
\pm u_n\sqrt p\,\wT_1\wT_2
\right\}
\to
\mb E\exp\left\{
\pm \rho^{-1}Z_1Z_2
\right\} = \lb 1 - \rho^{-2} \rb^{-1/2}
\]
which is \eqref{eq:watson-product-mgf}. 

Therefore, it suffices to prove $\eqref{eq:watson-product-ui}$. First note that, for every
fixed \(a<1/2\),
\[
\sup_n
\mb E\exp\left\{
a(p-2\kappa_n)\cdot \wT_n^2
\right\}<\infty .
\]
Indeed, this follows from the density bound in the proof of (i), since the
normalizing constants of the densities of
\(\sqrt{p-2\kappa_n} \cdot \wT_n\) are bounded away from zero.

Now fix \(\delta\in(0,\rho-1)\).  Then
\[
\frac{(1+\delta)u_n\sqrt p}{p-2\kappa_n}
\to
\frac{1+\delta}{\rho}<1.
\]
Thus, for all sufficiently large \(n\), we may choose a fixed \(a<1/2\) such
that
\[
\frac{(1+\delta)u_n\sqrt p}{4\Delta_n}
\le a.
\]
Using \(|xy|\le (x^2+y^2)/2\), we get
\begin{align*}
\sup_{n \geq 1} \la \mb E\exp\left[
(1+\delta)u_n\sqrt p\cdot |\wT_1\wT_2|
\right] \ra
& \leq
\sup_{n \geq 1} \la \mb E\exp\left[
\frac{(1+\delta)u_n\sqrt p}{2}
\cdot \left(\wT_1^2+\wT_2^2\right)
\right] \ra \\
&= \sup_{n \geq 1}
\left[
\mb E\exp\left\{
\frac{(1+\delta)u_n\sqrt p}{4\Delta_n} \cdot
(p-2\kappa_n)\cdot \wT_n^2
\right\}
\right]^2
< \infty.
\end{align*}
This proves \eqref{eq:watson-product-ui}.  \(\hfill\square\)

The next result gives an exact asymptotic formula for the tail probability of the inner product under the Watson model. 

\begin{prop}\label{lem:watson-onepair}
    Suppose $\bm X_1, \bm X_2$ are i.i.d. with the Watson distribution as in \eqref{class watson}, and assume that
    \[
    \frac{p}{(\log n)^2} \to \infty; \qquad \frac{\Delta_n}{\sqrt{p \log n}} = \frac{p-2\kappa_n}{2\sqrt{p \log n}} \to \rho \in (1,\infty).
    \]
    Then, for fixed $x \in \mb R$ and $t_n=t_n(x)$ as in \eqref{u-t}, we have 
    \begin{align}
 \mb{P}_{\kappa_n,\bm \mu} \lb \bm X_1^\top \bm X_2 > t_n \rb
 &=\frac{1+o(1)}{2\sqrt{2\pi}}\cdot n^{-2}e^{-x/2} \cdot (1-\rho^{-2})^{-1/2},
 \label{eq:watson-right}\\
 \mb P_{\kappa_n,\bm \mu}\lb \bm X_1^\top \bm X_2<-t_n \rb
 &=\frac{1+o(1)}{2\sqrt{2\pi}}\cdot n^{-2}e^{-x/2} \cdot (1-\rho^{-2})^{-1/2}.
 \label{eq:watson-left}
\end{align}
\end{prop}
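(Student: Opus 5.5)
The plan is to copy the proof of Proposition \ref{lem:fvml-onepair} almost verbatim, with $T_i$ replaced by $\wT_i$ and Proposition \ref{prop:local-watson-mgf} used in place of Proposition \ref{prop:exponential moment}. Take $\bm\mu=\bm e_1$ and apply the decomposition \eqref{representation2}. This gives
\[
\bm X_1^\top\bm X_2 = A_n Z + B_n,\qquad A_n:=\sqrt{(1-\wT_1^2)(1-\wT_2^2)},\qquad B_n:=\wT_1\wT_2,
\]
where $Z\sim Z_d$ with $d=p-1$, and $Z$ is independent of $(\wT_1,\wT_2)$. Conditioning on $(\wT_1,\wT_2)$ gives
\[
\mb P(\bm X_1^\top\bm X_2>t_n)=\mb E\,\Psi_d\bigl(\sqrt d\,(t_n-B_n)/A_n\bigr),
\]
and the aim is to show this equals $\Psi_d(u_n)\,\mb E e^{u_n\sqrt p B_n}+o(\Psi_d(u_n))$. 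Then \eqref{eq:Psi-main} and \eqref{eq:watson-product-mgf} finish the right tail. The left tail follows in the same way by replacing $B_n$ with $-B_n$, using the symmetry of $Z$ and the $\pm$ version of \eqref{eq:watson-product-mgf}.

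\emph{Step 1 (linearization).} Show that $\sqrt d\,(t_n\mp B_n)/A_n=u_n\mp\sqrt pB_n+r_{\pm,n}$ with $u_n r_{\pm,n}\to0$ in probability.
\begin{itemize}
\item By Proposition \ref{prop:local-watson-mgf}(i), $\wT_i^2=O_{\mb P}(1/\Delta_n)=O_{\mb P}\bigl((p\log n)^{-1/2}\bigr)$.
\item Hence $u_n^2(\wT_1^2+\wT_2^2)=O_{\mb P}\bigl(\sqrt{\log n/p}\bigr)=o_{\mb P}(1)$.
\item $u_n\sqrt p\,B_n$ converges in law to $\rho^{-1}Z_1Z_2$, so it is tight.
\item $\sqrt d\,t_n-u_n=O(u_n/p)$.
\end{itemize}
The same three-term decomposition used in the FvML case then applies unchanged.

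\emph{Step 2 ($L^1$ ratio convergence).} Prove the analogue of \eqref{eq:right-tail-L1}:
\[
\mb E\Bigl|\Psi_d\bigl(u_n-\sqrt pB_n+r_{+,n}\bigr)/\Psi_d(u_n)-e^{u_n\sqrt pB_n}\Bigr|\to0.
\]
The argument is the FvML one.
\begin{itemize}
\item On the good event $\mc A_n=\{|u_n\sqrt pB_n|\le M,\ u_n|r_{+,n}|\le\ve\}$, the local ratio \eqref{eq:Psi-local-ratio} gives the ratio as $e^{u_n\sqrt pB_n}(1+o(1)+O(\ve))$.
\item Off $\mc A_n$, the domination bound \eqref{eq:right-domination} of Lemma \ref{lem:spherical-tail} bounds the ratio by $e^{u_n\sqrt pB_n}$ up to constants.
\item The family $\{e^{u_n\sqrt p B_n}\}$ is uniformly integrable by \eqref{eq:watson-product-ui} with some $\delta\in(0,\rho-1)$, and $\lim_M\limsup_n\mb P(\mc A_n^c)=0$ by tightness.
\end{itemize}
Sending $M\to\infty$ and then $\ve\to0$ gives the claim.

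\emph{Main obstacle.} The delicate point is whether the domination \eqref{eq:right-domination} applies here. In the Watson case, $\sqrt pB_n$ is not small uniformly: $|\wT_i|$ can be of order one with tiny probability, and then the argument of $\Psi_d$ may be negative or even reach the boundary $\pm\sqrt d$. This needs checking. If the lemma covers all real arguments (with $\Psi_d\le1$), the bound $\Psi_d(\cdot)/\Psi_d(u_n)\lesssim e^{u_n\sqrt pB_n}$ still holds on such events, because $e^{u_n\sqrt pB_n}\ge1$ whenever $B_n\ge t_n$.

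If the lemma covers only a restricted range, the fallback is to truncate on $\{|\wT_1|,|\wT_2|\le\ve_0\}$. The complement has probability at most $\exp(-c\,\Delta_n\ve_0^2)=\exp\bigl(-c\sqrt{p\log n}\,\ve_0^2\bigr)=o(n^{-2})$, using the Gaussian-type density bound from the proof of Proposition \ref{prop:local-watson-mgf}(i) together with $p/(\log n)^2\to\infty$. So the complement is negligible relative to $\Psi_d(u_n)\asymp n^{-2}$. On the truncated event, $A_n\ge1-\ve_0^2$ and the domination applies, with a constant slightly larger than $u_n\sqrt p$ in the exponent. That larger constant is absorbed by choosing $\delta$ in \eqref{eq:watson-product-ui} appropriately.
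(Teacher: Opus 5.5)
Your proposal is correct and is essentially the paper's own proof. The paper likewise reduces to the FvML argument, linearizes using Proposition \ref{prop:local-watson-mgf}(i), proves the $L^1$ ratio convergence, and concludes with \eqref{eq:watson-product-mgf}, \eqref{eq:watson-product-ui} and \eqref{eq:Psi-main}. The obstacle you flag does not arise: \eqref{eq:right-domination} and \eqref{eq:left-domination} are stated for every $A\in[0,1]$ and $B\in[-1,1]$, including $B\ge t_n$, which covers all realizations of $(A_n,B_n)$, so your truncation fallback is unnecessary.
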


\noindent \textbf{Proof of Proposition \ref{lem:watson-onepair}.}
By rotation invariance, we may assume \(\bm\mu=\bm e_1\). Using
\eqref{representation2}, write
\[
 \bm X_i=\left(\wT_i,\sqrt{1-\wT_i^2}\,\bm V_i\right),
 \qquad i=1,2,
\]
where \(\bm V_1,\bm V_2\) are independent and uniform on
\(\mb S^{p-2}\), independent of \(\wT_1,\wT_2\). Again, put
\[
 d:=p-1,
 \qquad
 Z:=\bm V_1^\top \bm V_2 .
\]
Then \(Z\) has the same distribution as \(Z_d\) in \eqref{Psi}, and
\[
 \bm X_1^\top \bm X_2
 =
 \wT_1\wT_2+
 \sqrt{(1-\wT_1^2)(1-\wT_2^2)}\,Z.
\]
With a slight abuse of notation, as in the proof of Proposition \ref{lem:fvml-onepair}, put
\[
 A_n:=\sqrt{(1-\wT_1^2)(1-\wT_2^2)},
 \qquad
 B_n:=\wT_1\wT_2.
\]
Thus
\[
 \bm X_1^\top \bm X_2=A_nZ+B_n.
\]

Arguing as in the proof of Proposition \ref{lem:fvml-onepair} and using Proposition \ref{prop:local-watson-mgf}(i) and (ii), we obtain
\begin{equation}\label{eq:watson-Y-plus-expansion}
 \sqrt d\cdot \frac{t_n-B_n}{A_n}
 =
 u_n-\sqrt p\,B_n+r_{+,n},
 \qquad
 u_n|r_{+,n}|\to0
 \quad\text{in probability},
\end{equation}
and
\begin{equation}\label{eq:watson-Y-minus-expansion}
 \sqrt d\cdot \frac{t_n+B_n}{A_n}
 =
 u_n+\sqrt p\,B_n+r_{-,n},
 \qquad
 u_n|r_{-,n}|\to0
 \quad\text{in probability}.
\end{equation}
Conditioning on \(\wT_1,\wT_2\), it follows from \eqref{eq:watson-Y-plus-expansion} that
\[
\begin{aligned}
 \mb P_{\kappa_n,\bm e_1}
 \left(\bm X_1^\top\bm X_2>t_n\,\middle|\,\wT_1,\wT_2\right)
 &=
 \mb P(A_nZ+B_n>t_n\,|\,\wT_1,\wT_2)\\
 &=
 \Psi_d\left(
 \sqrt d\cdot\frac{t_n-B_n}{A_n}
 \right)\\
 &=
 \Psi_d\left(u_n-\sqrt p\,B_n+r_{+,n}\right),
\end{aligned}
\]
Arguing as in the proof of Proposition \ref{lem:fvml-onepair}, we obtain
\begin{equation*}
 \mb E
 \left|
 \frac{
 \Psi_d\left(u_n-\sqrt p\,B_n+r_{+,n}\right)
 }{\Psi_d(u_n)}
 -
 e^{u_n\sqrt p\,B_n}
 \right|
 \to0.
\end{equation*}
By Proposition \ref{prop:local-watson-mgf}(ii),
$\left\{ e^{u_n\sqrt p\,B_n}:n\ge1 \right\}$
is uniformly integrable, and
\[
 \mb E e^{u_n\sqrt p\,B_n}
 \to
 (1-\rho^{-2})^{-1/2}.
\]
It follows that
\[
\begin{aligned}
 \mb P_{\kappa_n,\bm e_1}
 \left(\bm X_1^\top\bm X_2>t_n\right)
 &=
 \mb E
 \Psi_d\left(
 \sqrt d\cdot\frac{t_n-B_n}{A_n}
 \right)\\
 &=
 \Psi_d(u_n)\,
 \mb E e^{u_n\sqrt p\,B_n}
 +o\{\Psi_d(u_n)\}.
\end{aligned}
\]
By Proposition \ref{prop:local-watson-mgf}(ii),
\[
 \mb E e^{u_n\sqrt p\,B_n}
 \to
 (1-\rho^{-2})^{-1/2}.
\]
Also, by \eqref{eq:Psi-main},
\[
 \Psi_d(u_n)
 =
 \frac{1+o(1)}{2\sqrt{2\pi}}\cdot
 n^{-2}e^{-x/2}.
\]
Therefore,
\[
 \mb P_{\kappa_n,\bm e_1}
 \left(\bm X_1^\top\bm X_2>t_n\right)
 =
 \frac{1+o(1)}{2\sqrt{2\pi}}\cdot
 n^{-2}e^{-x/2}\cdot
 (1-\rho^{-2})^{-1/2}.
\]
This proves \eqref{eq:watson-right}.
The proof of \eqref{eq:watson-left} is similar. 
\(\hfill\square\)

\subsection{Poisson approximation} \label{sec:Poisson-watson-local}

We now prove the local-regime statement of Theorem \ref{thm: Watson}.
Without loss of generality, assume \(\bm\mu=\bm e_1\).  Recall
\(\mc I_n\), \(N_\alpha\), \(I_{ij}\), and \(W\) from
\eqref{I-W} and Section \ref{sec:Poisson-approx-fvml}.  Thus
\[
 \mc I_n=\{(i,j):1\le i<j\le n\},
\]
and, for \(\alpha=(i,j)\in\mc I_n\),
\[
 N_\alpha
 =
 \{(k,l)\in\mc I_n:\{i,j\}\cap\{k,l\}\ne\emptyset\}.
\]
As before, \(|N_\alpha|\le 2n\), \(I_\alpha\) is independent of
\(\{I_\beta:\beta\notin N_\alpha\}\) and
\[
\mb{P}_{\kappa_n, \bm e_1} \lb P_n \leq x \rb = \mb{P}_{\kappa_n, \bm e_1} \lb M_n \leq t_n \rb = \mb{P}_{\kappa_n\, \bm e_1} \lb \sum_{\alpha \in \mc I_n} I_{\alpha} = 0 \rb
\] 
where the quantities $M_n, t_n$ are defined in \eqref{S- M} and \eqref{u-t}, respectively. 

The proof reduces to establishing asymptotic independence among the events $I_\alpha$, as in Section \ref{sec:Poisson-approx-fvml}. To show asymptotic independence, let us first introduce a truncation.  Put
\[
 R_n:=(\log n)^{1/2}.
\]
For \(1\le i<j\le n\), define
\[
 \mc A_{ij}
 :=
 \left\{
 (p-2\kappa_n)\cdot (\wT_i^2+\wT_j^2)\le R_n
 \right\},
\]
and decompose
\[
 I_{ij}=I_{ij}^{\circ}+I_{ij}^{\star},
 \qquad
 I_{ij}^{\circ}:=I_{ij} \cdot \mathbf 1_{\mc A_{ij}},
 \qquad
 I_{ij}^{\star}:=I_{ij} \cdot \mathbf 1_{\mc A_{ij}^{c}}.
\]
Intuitively, the main contribution comes from $I_{ij}^\circ$ and the truncated events $I_{ij}^{\star}$ are negligible (which we will verify below).

Write
\[
 W^{\circ}:=\sum_{1\le i<j\le n} I_{ij}^{\circ},
 \qquad
 W^{\star}:=\sum_{1\le i<j\le n} I_{ij}^{\star}.
\]
Then
\[
 W=W^{\circ}+W^{\star}.
\]

{\it \underline{Step 1: The truncated events $I_{ij}^\star$ are negligible}.} Conditioning on
\(\wT_1,\wT_2\), write
\[
 \bm X_1^\top\bm X_2
 =
 \wT_1\wT_2+
 \sqrt{(1-\wT_1^2)(1-\wT_2^2)}Z,
\]
where \(Z\) has the same distribution as \(Z_d\) in \eqref{Psi}, with
\(d=p-1\).  By \eqref{eq:right-domination} and
\eqref{eq:left-domination},
\[
\begin{aligned}
\mb P
 \left(I_{12}=1\,\middle|\,\wT_1,\wT_2\right) 
&\lesssim
 \Psi_d(u_n)
 \left[
 \exp\{u_n\sqrt p\,\wT_1\wT_2\}
 +
 \exp\{-u_n\sqrt p\,\wT_1\wT_2\}
 \right]  \\
&\lesssim
 \Psi_d(u_n) \cdot 
 \exp\la u_n\sqrt p\cdot |\wT_1\wT_2|\ra.
\end{aligned}
\]
Therefore
\[
\begin{aligned}
 \mb P \lb I_{12}^\star = 1 \rb
 &\lesssim
 \Psi_d(u_n) \cdot 
 \mb E\left[
 \exp\la u_n\sqrt p\cdot|\wT_1\wT_2|\ra
\cdot  \mathbf 1_{\mc A_{12}^{c}}
 \right].
\end{aligned}
\]
By Proposition \ref{prop:local-watson-mgf}(i),
\[
 (p-2\kappa_n)\wT_i^2=O_{\mb P}(1),
 \qquad i=1,2.
\]
Since \(R_n\to\infty\), we have
$\mb P(\mc A_{12}^{c})\to 0$.
Moreover, by Proposition \ref{prop:local-watson-mgf}(ii), the family
\[
 \left\{
 \exp\lb u_n\sqrt p\,|\wT_1\wT_2|\rb :n\ge1
 \right\}
\]
is uniformly integrable. Hence
\[
 \mb E\left[
 \exp\la u_n\sqrt p\cdot |\wT_1\wT_2|\ra
\cdot  \mathbf 1_{\mc A_{12}^{c}}
 \right]
=o(1).
\]
Using \eqref{eq:Psi-main}, we obtain
\[
 \mb P(I_{12}^{\star}=1)
 =
 o(n^{-2}).
\]
Thus
\[
 \mb E W^{\star}
 =
 \binom n2 \cdot \mb P(I_{12}^{\star}=1)
 =
 o(1).
\]
In particular,
\begin{equation*}
 \mb P(W^{\star} \geq 1)
 \le
 \mb E W^{\star}
 =
 o(1).
\end{equation*}
Thus
\begin{align}\label{eq:watson-Wstar-small}
\mb P \lb P_n \leq x \rb = \mb P  \lb M_n \leq t_n \rb = \mb P \lb W=0 \rb =  \mb P \lb W^\circ=0  \rb + o(1).
\end{align}

{\it \underline{Step 2: Poisson approximation to \(W^{\circ}\)}.}  We first observe that the truncated
events \(I_{ij}^{\circ}\) have the same dependency graph as the original
indicators \(I_{ij}\), since \(I_{ij}^{\circ}\) depends only on
\((\bm X_i,\bm X_j)\). Now put
\[
 \lambda_n^{\circ}:=\mb E W^{\circ}
 =
 \binom n2 \cdot 
 \mb P(I_{12}^{\circ}=1).
\]
Since
\[
 0
 \le
 \mb P(I_{12}=1)
 -
 \mb P(I_{12}^{\circ}=1)
 =
 \mb P(I_{12}^{\star}=1)
 =
 o(n^{-2}),
\]
Proposition \ref{lem:watson-onepair} gives
\[
\begin{aligned}
 \lambda_n^{\circ}
 =
 \binom n2 \cdot
 \mb P(I_{12}^{\circ}=1) 
 \to
 (8\pi)^{-1/2} \cdot e^{-x/2}\cdot (1-\rho^{-2})^{-1/2}.
\end{aligned}
\]
Applying Lemma \ref{lem:AGG} to the family \(\la I_{ij}^{\circ}:(i,j)\in\mc I_n\ra \), we obtain
\[
 \left|
 \mb P(W^{\circ}=0)
 -
 \exp\lb -\lambda_n^{\circ}\rb
 \right|
 \le
 b_{1n}^{\circ}+b_{2n}^{\circ},
\]
where
\[
 b_{1n}^{\circ}
 \lesssim
 n^3 \cdot
 \mb P(I_{12}^{\circ}=1)^2,
 \qquad
 b_{2n}^{\circ}
 \lesssim
 n^3 \cdot
 \mb P
 \left(I_{12}^{\circ}=1,I_{13}^{\circ}=1\right).
\]
The first term satisfies
\[
 b_{1n}^{\circ}
 \lesssim
  n^3
 \cdot \mb P(I_{12}=1)^2
 =
 O(n^{-1})
 =
 o(1).
\]

We now prove that \(b_{2n}^{\circ}\to0\).  For fixed
\(\bm x=\lb x_1,\sqrt{1-x_1^2} \cdot \bm v \rb \in\mb S^{p-1}\), define
\[
 \eta_n^{\circ}(\bm x)
 :=
 \mb P_{\kappa_n,\bm e_1}
 \left(
 |\bm x^\top\bm X|>t_n,\,
 (p-2\kappa_n)(x_1^2+\wT^2)\le R_n
 \right),
\]
where the probability is taken over \(\bm X=\lb \wT,\sqrt{1-\wT^2}\bm V \rb \), which has the Watson distribution with concentration $\kappa_n$, location $\bm e_1$, and the corresponding tangent--normal decomposition $\lb \wT, \bm V \rb$ as in \eqref{representation2}.

Conditioning on \(\bm X_1\), we obtain
\[
 \mb P
 \left(I_{12}^{\circ}=1,I_{13}^{\circ}=1\right)
 =
 \mb E_{\kappa_n,\bm e_1}
 \left[
 \eta_n^{\circ}(\bm X_1)^2
 \right].
\]
We claim that there exists a constant $C_\rho>0$, depending only on $\rho$, such that, uniformly over \(\bm x\in\mb S^{p-1}\),
\begin{equation}\label{eq:watson-eta-truncated-bound}
 \eta_n^{\circ}(\bm x)
 \lesssim
 \Psi_d(u_n)\cdot \exp\lb C_\rho \cdot R_n\rb.
\end{equation}
Indeed, if
\[
 (p-2\kappa_n)x_1^2>R_n,
\]
then \(\eta_n^{\circ}(\bm x)=0\). Otherwise, conditioning on \(\wT\),
we may write
\[
 \bm x^\top \bm X
 =
 x_1\wT+
 \sqrt{1-x_1^2}\sqrt{1-\wT^2}\cdot Z
\]
where \(Z\) has the same distribution as \(Z_d\).  By
\eqref{eq:right-domination} and \eqref{eq:left-domination},
\[
\begin{aligned}
\eta_n^{\circ}(\bm x)
&\lesssim
\Psi_d(u_n) \cdot 
\mb E_{\wT}\left[
\left(
e^{u_n\sqrt p\,x_1\wT}
+
e^{-u_n\sqrt p\,x_1\wT}
\right) \cdot 
\mathbf 1_{\{(p-2\kappa_n)(x_1^2+\wT^2)\le R_n\}}
\right]  \\
&\lesssim
\Psi_d(u_n) \cdot 
\exp\left\{
u_n\sqrt p\cdot 
\sup_{\la (p-2\kappa_n)(x_1^2+y^2)\le R_n\ra}
 |x_1y|
\right\}.
\end{aligned}
\]
Since
$2|x_1y|\le x_1^2+y^2$,
the supremum above is bounded by
$
 \frac{R_n}{2(p-2\kappa_n)}.
$
Therefore
\[
 u_n\sqrt p\cdot 
 \sup_{\la (p-2\kappa_n)(x_1^2+y^2)\le R_n\ra}
 |x_1y|
 \le
 \frac{u_n\sqrt p}{2(p-2\kappa_n)}\cdot R_n
 \le
 C_\rho \cdot R_n,
\]
because
$(u_n\sqrt p)/(p-2\kappa_n) \to \rho^{-1}.
$
This proves \eqref{eq:watson-eta-truncated-bound}.

Consequently, by \eqref{eq:Psi-main},
\[
\begin{aligned}
\mb P \left(I_{12}^{\circ}=1,I_{13}^{\circ}=1\right)
&=
\mb E_{\kappa_n,\bm e_1}
\left[
\eta_n^{\circ}(\bm X_1)^2
\right]  \\
&\lesssim
\Psi_d(u_n)^2 \cdot \exp\lb C_\rho \cdot  R_n \rb \\
&=
n^{-4+o(1)},
\end{aligned}
\]
since \(R_n=(\log n)^{1/2}=o(\log n)\).  Hence
\[
 b_{2n}^{\circ}
 \lesssim
 n^3 \cdot 
 \mb P
 \left(I_{12}^{\circ}=1,I_{13}^{\circ}=1\right)
 =
 n^{-1+o(1)}
 =
 o(1).
\]

Putting everything together,
\begin{align*}
    & \limsup_{n \to \infty} \left|   \mb P\lb W^{\circ}=0 \rb - \exp\left\{
  -(8\pi)^{-1/2}\cdot e^{-x/2} \cdot (1-\rho^{-2})^{-1/2}
  \right\}  \right| \\
  \leq &\limsup_{n \to \infty} \left|    \mb P\lb W^{\circ}=0 \rb -  \exp\lb -\lambda_n^{\circ}\rb \right| +
  \limsup_{n \to \infty} \left|  \exp\lb -\lambda_n^{\circ}\rb - \exp\left\{
  -(8\pi)^{-1/2} \cdot e^{-x/2} \cdot (1-\rho^{-2})^{-1/2}
  \right\} \right| \\
  = & 0.
\end{align*}
Combining the preceding result with \eqref{eq:watson-Wstar-small} completes the proof of Theorem \ref{thm: Watson}.
\(\hfill\square\)

\section{Proof of Theorem \ref{thm:watson-rho<1}} \label{sec:proof-watson-2}

\subsection{Laplace method and tail probability asymptotic}

\begin{lemma}\label{lem:watson-coordinate-md}
Assume that
\[
    \rho_n:=\frac{\Delta_n}{\sqrt{p\log n}}\to \rho\in(0,\infty),
    \qquad
    \frac{p}{(\log n)^3}\to\infty .
\]
Recall the density $\widetilde{f}_T$ in \eqref{wT} and define
\[
g_n(\alpha):= \wf_T\left(
 \alpha\cdot \frac{(\log n)^{1/4}}{p^{1/4}}
 \right) \cdot 
 \frac{(\log n)^{1/4}}{p^{1/4}}.
\]
Then, uniformly for \(\alpha\) in compact subsets of \(\mb R\),
\[
 g_n(\alpha)
 =
 \frac{\sqrt{\rho_n}}{\sqrt\pi}\cdot (\log n)^{1/2} \cdot 
 \exp\left\{
 -(\log n)\left(\rho_n\alpha^2+\frac{\alpha^4}{4}\right)
 \right\}
 \lb 1+o(1)\rb.
\]
Moreover, for all $1/100>\ve>0$, there exists $C_{\ve,\rho}>0$ depending only on $\rho$ and $\ve$ such that
    \begin{equation}\label{eq:watson-md-crude-bound}
    g_n(\alpha)
    \le
    C_{\ve,\rho} \sqrt{\log n} \cdot 
    \exp\la -(1-\ve) \log n \lb \rho_n \alpha^2+ \frac{\alpha^4}{4} \rb\ra \cdot
    \mathbf 1_{\la |\alpha|<(p/\log n)^{1/4}\ra} .
    \end{equation}
\end{lemma}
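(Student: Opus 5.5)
Write $s:=(\log n/p)^{1/4}$, so that $t=\alpha s$ and $g_n(\alpha)=s\,\wf_T(\alpha s)=s\,e^{h_n(\alpha)}/Z_n$. Here
\[
h_n(\alpha):=\kappa_n\alpha^2s^2+\frac{p-3}{2}\log(1-\alpha^2s^2),\qquad Z_n:=\int_{-1}^1 e^{\kappa_n t^2}(1-t^2)^{(p-3)/2}\,dt .
\]
The plan is to expand the exponent $h_n$ directly and to handle the normalizing constant $Z_n$ with the same expansion.

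\textbf{Expansion of the exponent.} Set $y=\alpha^2s^2$. Then
\[
\log(1-y)=-y-\frac{y^2}{2}-R(y),\qquad 0\le R(y)\le \frac{y^3}{3(1-y)} .
\]
Since $\kappa_n-(p-3)/2=-\Delta_n+3/2$, this gives
\[
h_n(\alpha)=-\Delta_n\alpha^2s^2+\tfrac32\alpha^2s^2-\frac{p-3}{4}\alpha^4s^4-\frac{p-3}{2}R(\alpha^2s^2).
\]
We have $\Delta_n s^2=\rho_n\sqrt{p\log n}\,\sqrt{\log n/p}=\rho_n\log n$ and $ps^4=\log n$. Hence
\[
h_n(\alpha)=-\log n\Big(\rho_n\alpha^2+\frac{\alpha^4}{4}\Big)+O(\alpha^2s^2)+O(\alpha^4 s^4)+O\big(p\,\alpha^6s^6\big).
\]
The last error term is $p\alpha^6s^6=\alpha^6(\log n)^{3/2}p^{-1/2}$. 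This tends to $0$ uniformly on compacts because $p/(\log n)^3\to\infty$. This is exactly where that hypothesis enters.

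\textbf{Normalizing constant.} Substitute $t=\alpha s$, so $Z_n=s\int e^{h_n(\alpha)}\,d\alpha$. After the further rescaling $\alpha=v/\sqrt{\log n}$, the dominant contribution comes from $|\alpha|\lesssim(\log n)^{-1/2}$, where the quartic term is negligible. Laplace's method / dominated convergence then gives
\[
\int e^{h_n}\,d\alpha=\sqrt{\frac{\pi}{\rho_n\log n}}\,(1+o(1)).
\]
For domination, use $\log(1-y)\le -y-y^2/2$, which yields
\[
h_n(\alpha)\le-(\rho_n-o(1))\log n\,\alpha^2-\frac{(p-3)}{4p}\log n\,\alpha^4 .
\]
This bound is integrable uniformly in $n$ after rescaling, and the region $|\alpha|\ge(p/\log n)^{1/4}$ lies outside the support. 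Dividing by $Z_n$ gives the stated uniform asymptotic on compacts.

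\textbf{Crude bound.} The indicator is automatic, since $|t|<1$ is equivalent to $|\alpha|<s^{-1}$. For the exponent, the same inequality $\log(1-y)\le -y-y^2/2$ gives, for all $\alpha$ in the support,
\[
h_n(\alpha)\le -\log n\,\rho_n\alpha^2\Big(1-\frac{3}{2\rho_n\log n}\cdot\frac{\log n}{\Delta_n}\cdot\Delta_n s^2/\log n\Big)-\frac{p-3}{p}\frac{\log n\,\alpha^4}{4}.
\]
More simply, $\frac32\alpha^2s^2=\frac{3}{2\rho_n\log n}\cdot\rho_n\log n\,\alpha^2$ is an $o(1)$ fraction of the quadratic term, and $(p-3)/p\to1$. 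So for large $n$ both coefficients exceed $(1-\ve)$ times their limits, which gives the factor $\exp\{-(1-\ve)\log n(\rho_n\alpha^2+\alpha^4/4)\}$. Combining this with $s/Z_n=\big(\int e^{h_n}\big)^{-1}\sim\sqrt{\rho_n\log n/\pi}\le C_{\ve,\rho}\sqrt{\log n}$ proves \eqref{eq:watson-md-crude-bound}. The finitely many small $n$ are absorbed into the constant, which stays bounded since $\rho_n\to\rho>0$.

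The main obstacle is the uniformity of the remainder $pR(\alpha^2s^2)$ on compacts, which is where $p/(\log n)^3\to\infty$ is needed. The crude bound avoids the remainder entirely because the inequality $\log(1-y)\le -y-y^2/2$ has the right sign.
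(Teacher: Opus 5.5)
Your proposal is correct and follows essentially the same route as the paper. You Taylor-expand $\log(1-t^2)$ with the cubic remainder controlled by $p/(\log n)^3\to\infty$, compute the normalizing constant by rescaling at scale $(p\log n)^{-1/4}$ with dominated convergence, and use $\log(1-y)\le -y-y^2/2$ for the crude bound. The only difference is that you take the lower bound on $Z_n$ from the asymptotic you already proved, whereas the paper gives a separate elementary lower bound by restricting to $|z|\le (p\log n)^{-1/4}$; both work. There is a harmless slip: it should read $\frac32\alpha^2s^2=\frac{3s^2}{2\rho_n\log n}\cdot\rho_n\log n\,\alpha^2$, since the factor $s^2$ is missing from your right-hand side, but the conclusion that this term is an $o(1)$ fraction of the quadratic term still holds.
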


\noindent \textbf{Proof of Lemma \ref{lem:watson-coordinate-md}.}
By \eqref{wT}, the density of \(\wT\) is
\[
 \wf_T(t)
 =
 \frac{1}{\mathcal Z_n} \cdot 
 \exp\lb \kappa_n t^2\rb \cdot (1-t^2)^{(p-3)/2}
 \mathbf 1_{\{|t|<1\}},
\]
where
\[
 \mathcal Z_n
 =
 \int_{-1}^{1}
 \exp\lb \kappa_n z^2\rb \cdot (1-z^2)^{(p-3)/2}\,dz .
\]
Put
\[
 t=\alpha\frac{(\log n)^{1/4}}{p^{1/4}}.
\]
In what follows, we assume $\alpha \in [-M,M]$ for some $M \in (0,\infty)$.
 
 {\it \underline{Step 1: Analyzing the unnormalized density.}} Fix \(M<\infty\). Uniformly for \(|\alpha|\le M\), we have
\[
 \log(1-t^2)
 =
 -t^2-\frac{t^4}{2}+O(t^6).
\]
Therefore
\[
\begin{aligned}
\kappa_n t^2+\frac{p-3}{2}\log(1-t^2) &=
\left(\frac p2-\Delta_n\right)t^2
+\frac{p-3}{2}
\left(
-t^2-\frac{t^4}{2}+O(t^6)
\right)\\
&=
-\Delta_n t^2
+\frac32 t^2
-\frac{p-3}{4}t^4
+O(pt^6).
\end{aligned}
\]
Now
\[
 \Delta_n t^2
 =
 \rho_n\sqrt{p\log n}\cdot
 \alpha^2\sqrt{\frac{\log n}{p}}
 =
 \rho_n\alpha^2\log n,
\]
and
\[
 \frac{p-3}{4}t^4
 =
 \frac{\alpha^4}{4}\log n+O\left(\frac{\log n}{p}\right).
\]
The remaining terms are negligible uniformly for \(|\alpha|\le M\), because
\[
 t^2=O_M\left(\sqrt{\frac{\log n}{p}}\right)=o(1),
\]
and
\[
 pt^6
 =
 O_M\left(
 \frac{(\log n)^{3/2}}{\sqrt p}
 \right)
 =
 O_M\left[
 \left(\frac{(\log n)^3}{p}\right)^{1/2}
 \right]
 =
 o(1).
\]
Thus
\begin{equation}\label{eq:watson-md-unnormalized}
 \kappa_n t^2+\frac{p-3}{2}\log(1-t^2)
 =
 -(\log n)
 \left(
 \rho_n\alpha^2+\frac{\alpha^4}{4}
 \right)
 +o(1),
\end{equation}
uniformly for \(|\alpha|\le M\).

We next derive an upper bound on the unnormalized density. Using the elementary inequality
\[
 \log(1-y)\le -y-\frac{y^2}{2},
 \qquad 0\le y<1,
\]
we obtain, for all \(|t|<1\),
\[
\begin{aligned}
\kappa_n t^2+\frac{p-3}{2}\log(1-t^2)
&\le
\left(\frac p2-\Delta_n\right)t^2
-\frac{p-3}{2}t^2
-\frac{p-3}{4}t^4  \\
&=
-\Delta_n t^2
+\frac32 t^2
-\frac{p-3}{4}t^4 .
\end{aligned}
\]
With $t= \alpha (\log n)^{1/4}/p^{1/4}$ for $|\alpha|<\lb p/\log n \rb^{1/4}$, we obtain

\begin{align}
\kappa_n t^2+\frac{p-3}{2}\log(1-t^2)
&\le
-\rho_n\alpha^2\log n
-\frac{\alpha^4}{4}\log n
+\frac32\alpha^2\sqrt{\frac{\log n}{p}}
+\frac34\alpha^4\frac{\log n}{p} \nonumber \\
&\leq -(1-\ve) \cdot \log n \cdot
\left(
\rho_n\alpha^2+\frac{\alpha^4}{4}
\right). \label{numerator}
\end{align}
for all sufficiently small $\ve>0$ and all sufficiently large $n$, since $\rho_n \to \rho \in (0,\infty)$.

{\it \underline{Step 2: Bounding the normalizing constant \(\mathcal Z_n\)}.}  Let us make the change of variables
\[
 z=\frac{h}{p^{1/4}(\log n)^{1/4}} .
\]
Then
\[
\begin{aligned}
\mathcal Z_n
&=
\frac{1}{p^{1/4}(\log n)^{1/4}}
\int_{|h|<p^{1/4}(\log n)^{1/4}}
\exp\left\{
\kappa_n \cdot \frac{h^2}{\sqrt{p\log n}}
+
\frac{p-3}{2}
\log\left(1-\frac{h^2}{\sqrt{p\log n}}\right)
\right\}\,dh .
\end{aligned}
\]
For fixed \(h\), the exponent equals
\[
\begin{aligned}
&\left(\frac p2-\Delta_n\right)
\frac{h^2}{\sqrt{p\log n}}
+
\frac{p-3}{2}
\log\left(1-\frac{h^2}{\sqrt{p\log n}}\right)\\
&\qquad =
-\Delta_n\frac{h^2}{\sqrt{p\log n}}
+\frac32\frac{h^2}{\sqrt{p\log n}}
-\frac{p-3}{4}\frac{h^4}{p\log n}
+O\left(
p\frac{|h|^6}{(p\log n)^{3/2}}
\right)\\
&\qquad =
-\rho_nh^2+o(1).
\end{aligned}
\]
Moreover, using \(\log(1-y)\le -y\) for \(0\le y<1\), we have
\begin{align*}
  \kappa_n \cdot \frac{h^2}{\sqrt{p\log n}}
+
\frac{p-3}{2}
\log\left(1-\frac{h^2}{\sqrt{p\log n}}\right) 
\leq & \left(\frac p2-\Delta_n\right)\frac{h^2}{\sqrt{p\log n}}-\frac{p-3}{2} \frac{h^2}{\sqrt{p\log n}}
\\
= &-\left(\Delta_n-\frac32\right)\frac{h^2}{\sqrt{p\log n}} \leq -\frac{\rho}{2}h^2
\end{align*}
for all sufficiently large \(n\), uniformly in \(h\). Hence the dominated convergence theorem gives
\[
\begin{aligned}
\mathcal Z_n
&=
\frac{1+o(1)}{p^{1/4}(\log n)^{1/4}}
\int_{-\infty}^{\infty}e^{-\rho_nh^2}\,dh
=
\frac{1+o(1)}{p^{1/4}(\log n)^{1/4}}
\sqrt{\frac{\pi}{\rho_n}}.
\end{aligned}
\]
Combining this normalization with \eqref{eq:watson-md-unnormalized}, we obtain the first claim. To finish the proof of the second claim, let us derive a lower bound on $\mc Z_n$.
  Recall that
\[
 \mathcal Z_n
 =
 \int_{-1}^{1}
 \exp\lb \kappa_n z^2\rb \cdot (1-z^2)^{(p-3)/2}\,dz .
\]
Restricting the integral above to $|z|\le (p\log n)^{-1/4}$ and using the estimate  $\log(1-y)\ge -y-y^2$
for all sufficiently small \(y\ge0\), we obtain
\[
\begin{aligned}
\kappa_n z^2+\frac{p-3}{2}\log(1-z^2)
&\ge
\left(\frac p2-\Delta_n\right)z^2
-\frac{p-3}{2}(z^2+z^4)  \\
&=
-\Delta_n z^2
+\frac32 z^2
-\frac{p-3}{2}z^4 .
\end{aligned}
\]
Since \(|z|\le (p\log n)^{-1/4}\),
\[
 \Delta_n z^2
 \lesssim
 \sqrt{p\log n}\cdot (p\log n)^{-1/2}
 =
 O(1); \qquad  pz^4\lesssim \frac1{\log n}=o(1).
\]
Therefore, on the interval \(|z|\le (p\log n)^{-1/4}\), the exponent is
bounded below by a constant.  Consequently,
\begin{equation*}
 \mathcal Z_n
 \ge
 c(p\log n)^{-1/4}
\end{equation*}
for some constant \(c>0\). Together with \eqref{numerator}, the preceding display yields \eqref{eq:watson-md-crude-bound}. This completes the proof.
\(\hfill\square\)

We will also use the following uniform version of Laplace's method. We first describe the setting. We are interested in the asymptotic behavior of the sequence of integrals
\[
 \int_{B(\bm x_n,r)}
h_n(\bm x)
\exp\{-L_n \cdot H_n(\bm x)\}\,d\bm x.
\]
Here $L=L_n$ is a sequence such that $L_n \to \infty$, and $B(\bm x_n, r) \subset \mb R^2$ is a sequence of open balls centered at $\bm x_n \in \mb R^2$ with fixed radius $r>0$. We assume that \(H_n\) is three times continuously differentiable on
\(B(\bm x _n,r)\) and \(h_n\) is positive on \(B(\bm x_n,r)\). Assume further that the following regularity conditions hold:
\begin{enumerate}[label=(\roman*)]

\item $ \nabla H_n(\bm x_n)=0.$

\item The eigenvalues of $\bm  \Sigma_n:=\nabla^2H_n(\bm x_n)$ are bounded away from $0$ and uniformly bounded above, uniformly in $n$, and
\[
H_n(\bm x) \geq H_n \lb \bm x_n \rb + c\| \bm x - \bm x_n \|^2, \qquad \bm x \in B \lb \bm x_n,r \rb
\]
for some fixed constants $c,r>0$.

\item As $n \to \infty$, 
\[
\sup_{\| \bm x\|\le \log L_n}  \left|
    \frac{
    h_n(\bm x_n+  \frac{\bm x}{\sqrt{L_n}})
    }{
    h_n(\bm x_n)
    }
    -1
    \right|
    \to0.
    \]
    \item For some constant \(C>0\),
    \[
    \sup_{\bm x\in B(\bm x_n,r)}
    \frac{h_n(\bm x)}{h_n(\bm x_n)}
    \le L_n^C 
    \]
    and
    \[
    \sup_{\| \bm y \| \leq \log L_n} \left| L_n \left[ H_n \lb \bm x_n + \frac{\bm y}{\sqrt{L_n}} \rb - H_n \lb \bm x_n \rb \right]  - \frac{\bm y^\top \bm \Sigma_n \bm y}{2} \right| \to 0.
    \]
\end{enumerate}


\begin{lemma}
\label{lem:local-laplace}
If conditions $(i)-(iv)$ above hold, then
\[
\begin{aligned}
 \int_{B(\bm x_n,r)}
h_n(\bm x)
\exp\Big[ -L_n \cdot H_n(\bm x)\Big] \,d\bm x 
& =
\frac{2\pi}{L_n}
\cdot
\frac{
h_n(\bm x_n)
}{
\det(\bm \Sigma_n)^{1/2} 
}
\cdot
\exp\Big[ -L_n\cdot H_n(\bm x_n)\Big]
\cdot \lb  1+o(1)\rb
\end{aligned}
\]
as $n \to \infty$.
\end{lemma}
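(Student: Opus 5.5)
We prove Lemma \ref{lem:local-laplace} by the standard change of variables \(\bm x=\bm x_n+\bm y/\sqrt{L_n}\). This turns the integral into
\[
\frac{h_n(\bm x_n)e^{-L_nH_n(\bm x_n)}}{L_n}\int_{\|\bm y\|<r\sqrt{L_n}}\frac{h_n(\bm x_n+\bm y/\sqrt{L_n})}{h_n(\bm x_n)}\exp\Big\{-L_n\big[H_n(\bm x_n+\bm y/\sqrt{L_n})-H_n(\bm x_n)\big]\Big\}\,d\bm y .
\]
It then suffices to show that the last integral equals \(2\pi\det(\bm\Sigma_n)^{-1/2}(1+o(1))\). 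We split the \(\bm y\)-domain into an inner region \(\|\bm y\|\le \log L_n\) and an outer region \(\log L_n<\|\bm y\|<r\sqrt{L_n}\).

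On the inner region, condition (iii) replaces the ratio \(h_n(\cdot)/h_n(\bm x_n)\) by \(1+o(1)\) uniformly. The second part of (iv) replaces the exponent by \(-\bm y^\top\bm\Sigma_n\bm y/2+o(1)\) uniformly. The inner integral is therefore \((1+o(1))\int_{\|\bm y\|\le\log L_n}e^{-\bm y^\top\bm\Sigma_n\bm y/2}\,d\bm y\). The eigenvalues of \(\bm\Sigma_n\) are bounded below by some \(\lambda_0>0\) by (ii), so the Gaussian mass outside the ball of radius \(\log L_n\) is at most \(C e^{-\lambda_0(\log L_n)^2/4}=o(1)\). The upper eigenvalue bound also gives \(\det(\bm\Sigma_n)^{-1/2}\) bounded away from zero. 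Hence the inner integral is \(2\pi\det(\bm\Sigma_n)^{-1/2}(1+o(1))\), with the error genuinely relative.

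On the outer region, the quadratic growth in (ii) gives \(L_n[H_n(\bm x_n+\bm y/\sqrt{L_n})-H_n(\bm x_n)]\ge c\|\bm y\|^2\). The first part of (iv) bounds the ratio of \(h_n\) by \(L_n^C\). The outer contribution is thus at most
\[
L_n^C\int_{\|\bm y\|>\log L_n}e^{-c\|\bm y\|^2}\,d\bm y\lesssim L_n^C e^{-c(\log L_n)^2/2}.
\]
This tends to zero because \((\log L_n)^2\) eventually dominates \(C\log L_n\). It is therefore \(o(1)\) relative to the main term, which is bounded below.

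The only delicate point is the matching of the two regions. The cutoff \(\log L_n\) must be small enough that (iii) and (iv) hold uniformly on the inner ball, yet large enough that the Gaussian tail beats the polynomial factor \(L_n^C\). The choice \(\log L_n\) does both: the inner uniformity is exactly what (iii)–(iv) assume, and the outer bound decays like \(e^{-c'(\log L_n)^2}\), which beats any power of \(L_n\). Note that the \(C^3\) assumption on \(H_n\) is not used directly; the uniform quadratic approximation in (iv) replaces any Taylor-remainder estimate.
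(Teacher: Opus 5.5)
Your proposal is correct and follows essentially the same route as the paper: rescale by $\sqrt{L_n}$, use (iii)–(iv) on the inner ball of radius $\log L_n$ to reduce to a truncated Gaussian integral of mass $2\pi\det(\bm\Sigma_n)^{-1/2}(1+o(1))$, and use the quadratic growth in (ii) together with the $L_n^C$ bound in (iv) to show the annular region is negligible. The only cosmetic difference is that you change variables before splitting the domain, whereas the paper splits in the original coordinates and rescales each piece separately.
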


\noindent \textbf{Proof of Lemma \ref{lem:local-laplace}.}
Write \(m_n:=H_n(\bm x_n)\) and \(R_n:=\log L_n\). Since \(R_n/\sqrt{L_n}\to0\), the ball \(B(\bm x_n,R_n/\sqrt{L_n})\) is contained in \(B(\bm x_n,r)\) for all sufficiently large \(n\).
By conditions (iii) and (iv), uniformly for \(\|\bm y\|\le R_n\),
\[
\begin{aligned}
&
h_n\left(\bm x_n+\frac{\bm y}{\sqrt{L_n}}\right)
\exp\left\{
-L_n\left[
H_n\left(\bm x_n+\frac{\bm y}{\sqrt{L_n}}\right)
-m_n
\right]
\right\}
=
h_n(\bm x_n)
\exp\left\{
-\frac12\bm y^\top\bm\Sigma_n\bm y
\right\}
\{1+o(1)\}.
\end{aligned}
\]
Using the change of variables
\[
\bm x=\bm x_n+\frac{\bm y}{\sqrt{L_n}},
\qquad
d\bm x=L_n^{-1}\,d\bm y,
\]
we obtain
\[
\begin{aligned}
&
\int_{\|\bm x-\bm x_n\|\le R_n/\sqrt{L_n}}
h_n(\bm x) \cdot e^{-L_nH_n(\bm x)}\,d\bm x
=
\frac{h_n(\bm x_n)e^{-L_nm_n}}{L_n}
\cdot 
\int_{\|\bm y\|\le R_n}
\exp\left\{
-\frac12\bm y^\top\bm\Sigma_n\bm y
\right\}\,d\bm y
\cdot 
\{1+o(1)\}.
\end{aligned}
\]
By condition (ii),
\[
\begin{aligned}
0
&\le
\frac{2\pi}{\det(\bm\Sigma_n)^{1/2}}
-
\int_{\|\bm y\|\le R_n}
e^{-\bm y^\top\bm\Sigma_n\bm y/2}\,d\bm y
\le
\int_{\|\bm y\|>R_n}
\exp \la -\lambda_{\rm min} \lb \bm \Sigma_n \rb \cdot \frac{\|\bm y\|^2}{2} \ra \,d\bm y
=o(1).
\end{aligned}
\]
Since \(\det(\bm\Sigma_n)\) is uniformly bounded above and bounded
away from zero,
\[
\int_{\|\bm y\|\le R_n}
e^{-\bm y^\top\bm\Sigma_n\bm y/2}\,d\bm y
=
\frac{2\pi}{\det(\bm\Sigma_n)^{1/2}}
\{1+o(1)\}.
\]
It follows that
\[
\begin{aligned}
&
\int_{\|\bm x-\bm x_n\|\le R_n/\sqrt{L_n}}
h_n(\bm x)e^{-L_nH_n(\bm x)}\,d\bm x
=
\frac{2\pi}{L_n} \cdot 
\frac{h_n(\bm x_n)}
{\det(\bm\Sigma_n)^{1/2}}
e^{-L_nm_n}
\cdot 
\{1+o(1)\}.
\end{aligned}
\]
It remains to control the annular region. By conditions (ii) and (iv),
\[
\begin{aligned}
\int_{\{R_n/\sqrt{L_n}<\|\bm x-\bm x_n\|<r\}}
h_n(\bm x) \cdot e^{-L_nH_n(\bm x)}\,d\bm x
&\le
h_n(\bm x_n) \cdot L_n^C \cdot e^{-L_nm_n} \cdot 
\int_{\|\bm x-\bm x_n\|>R_n/\sqrt{L_n}}
e^{-cL_n\|\bm x-\bm x_n\|^2}\,d\bm x
\\
&\le
\frac{h_n(\bm x_n)e^{-L_nm_n}}{L_n} \cdot 
L_n^C
\cdot 
\int_{\|\bm y\|>R_n}
e^{-c\|\bm y\|^2}\,d\bm y.
\end{aligned}
\]
Since \(R_n=\log L_n\),
\[
L_n^C \cdot 
\int_{\|\bm y\|>\log L_n}
e^{-c\|\bm y\|^2}\,d\bm y
=o(1).
\]
Hence the annular contribution is
\[
o\left(
\frac{h_n(\bm x_n)e^{-L_nm_n}}{L_n}
\right),
\]
This completes the proof.
\(\hfill\square\)

\begin{prop} \label{prop:watson-ldp-rho<1}
    Assume the conditions of Theorem \ref{thm:watson-rho<1}. For fixed $x \in \mb R$, define
    \[
    s^\circ_n=s_n^\circ (x) := a \lb \rho_n \rb^2 \cdot \log n - \frac{a \lb \rho_n \rb}{A \lb \rho_n \rb} \cdot  \log \log n +x; \qquad t^\circ_n=t^\circ_n(x):= \sqrt{\frac{s^\circ_n(x)}{p}}.
    \]
    Then
    \[
    \mb{P}_{\kappa_n, \bm \mu_n} \lb \left| \bm X_1^\top \bm X_2  \right| \geq t^\circ_n \rb = \frac{2K(\rho)}{n^2} \cdot \exp \lb - \frac{A(\rho)}{2a(\rho)} \cdot x \rb \cdot (1+o(1))
    \]
    as $n \to \infty$.
\end{prop}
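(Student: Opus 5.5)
We condition on the cosine components, rescale them to the moderate-deviation scale of Lemma \ref{lem:watson-coordinate-md}, and evaluate the resulting two-dimensional integral by Lemma \ref{lem:local-laplace} around the minimizers of the variational problem from the proof-idea section. By rotation invariance we take $\bm\mu_n=\bm e_1$ and write, as before, $\bm X_1^\top\bm X_2=A_nZ+B_n$ with $B_n=\wT_1\wT_2$, $A_n=\sqrt{(1-\wT_1^2)(1-\wT_2^2)}$ and $Z\sim Z_d$, $d=p-1$. Set $L=\log n$, substitute $\wT_i=\alpha_i(L/p)^{1/4}$, and put $c_n:=\sqrt{s_n^\circ/L}$, so that $t_n^\circ=c_n\sqrt{L/p}$ and $c_n\to a(\rho)$. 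Then
\[
\mb P\left(\bm X_1^\top\bm X_2\ge t_n^\circ\,\middle|\,\wT_1,\wT_2\right)=\Psi_d\left(\sqrt d\,\frac{t_n^\circ-B_n}{A_n}\right),\qquad \sqrt d\,\frac{t_n^\circ-B_n}{A_n}\approx\sqrt L\,(c_n-\alpha_1\alpha_2).
\]

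\textbf{Step 1: linearization and the conditional tail.} The linearization error comes from $A_n^{-1}-1=O(\wT_1^2+\wT_2^2)=O(\alpha^2\sqrt{L/p})$. Multiplied by the square of the argument, which is $O(L)$, it gives $O(L^{3/2}/\sqrt p)=o(1)$. This is exactly where the assumption $p/(\log n)^3\to\infty$ enters. On the region $c_n-\alpha_1\alpha_2\ge\delta>0$, the Gaussian-type tail asymptotics for $\Psi_d$ from the appendix (in the range $y=O(\sqrt L)$ with $y^4/d\to0$) give
\[
\Psi_d(y)\approx\frac{e^{-y^2/2}}{\sqrt{2\pi}\,y}.
\]
Hence the conditional probability is $(2\pi L)^{-1/2}(c_n-\alpha_1\alpha_2)^{-1}e^{-L(c_n-\alpha_1\alpha_2)^2/2}(1+o(1))$.

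\textbf{Step 2: reduction to a Laplace integral.} By Lemma \ref{lem:watson-coordinate-md}, the joint density of $(\alpha_1,\alpha_2)$ is $(\rho_nL/\pi)\exp\{-L\,[\rho_n(\alpha_1^2+\alpha_2^2)+(\alpha_1^4+\alpha_2^4)/4]\}(1+o(1))$ on compacts. The right-tail probability therefore becomes
\[
\int h_n(\bm\alpha)\,e^{-L H_n(\bm\alpha)}\,d\bm\alpha,\qquad H_n(\bm\alpha)=\rho_n(\alpha_1^2+\alpha_2^2)+\frac{\alpha_1^4+\alpha_2^4}{4}+\frac{(c_n-\alpha_1\alpha_2)^2}{2},
\]
with
\[
h_n(\bm\alpha)=\frac{\rho_n\sqrt L}{\pi\sqrt{2\pi}\,(c_n-\alpha_1\alpha_2)}.
\]
Since $\rho<1$ we have $A>2\rho$, hence $c_n>2\rho_n$ for large $n$. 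The minimizers of $H_n$ are then $\pm(\alpha_*,\alpha_*)$ with $\alpha_*^2=c_n/2-\rho_n$, and at these points $c_n-\alpha_*^2=c_n/2+\rho_n>0$, so $h_n$ is regular there. The minimum value is $\phi(c_n):=c_n^2/4+\rho_nc_n-\rho_n^2$. We expand $L\phi(c_n)$ in $c_n^2$ around $a(\rho_n)^2$, using two identities:
\[
\frac{d\phi}{d(c^2)}=\frac{c/2+\rho}{2c}=\frac{A}{2a}\ \text{at } c=a,\qquad a^2/4+\rho a-\rho^2=A^2-2\rho^2=2.
\]
This gives
\[
L\phi(c_n)=2\log n-\tfrac12\log\log n+\frac{A(\rho)}{2a(\rho)}\,x+o(1).
\]
The factor $(\log n)^{1/2}$ arising here cancels against the powers of $L$ in the prefactor: $L\cdot L^{-1/2}\cdot(2\pi/L)\cdot L^{1/2}=O(1)$. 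Lemma \ref{lem:local-laplace} applied on balls around each of the two minimizers, with Hessian determinant computed from $\nabla^2H$ at $(\alpha_*,\alpha_*)$, should produce $K(\rho)\exp\{-A(\rho)x/(2a(\rho))\}/n^2$ for the right tail. The determinant should reduce to the factor $(A-\rho)(A-2\rho)$ in $K(\rho)$. The left tail is identical by the symmetry $\alpha_2\mapsto-\alpha_2$, with minimizers $\pm(\alpha_*,-\alpha_*)$, and this yields the factor $2$. Conditions (i)--(iv) of Lemma \ref{lem:local-laplace} follow from the uniform expansion in Lemma \ref{lem:watson-coordinate-md} and smoothness of $H_n$, given the $o(1)$ bounds from Step 1.

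\textbf{Step 3 (main obstacle): the remaining region.} The hard part is showing that the integral outside fixed balls around the minimizers is $o(n^{-2})$. For this I would use the crude bound \eqref{eq:watson-md-crude-bound} for the densities, together with the domination bounds \eqref{eq:right-domination}, or simply $\Psi_d\le1$ in the region $\alpha_1\alpha_2\ge c_n-\delta$, where the Mills-ratio form fails. The conditional probability is then bounded by $e^{-(1-\ve)L(c_n-\alpha_1\alpha_2)_+^2/2}$ up to polynomial factors. Two ingredients finish the argument:
\begin{itemize}
\item the positive gap $\inf_{\text{outside balls}}H^{+}-\phi(c)\ge\eta>0$, where $H^{+}$ is $H_n$ with $(\cdot)_+$, and $\ve$ is chosen so small that $(1-\ve)(\phi+\eta)>\phi+\eta/2$;
\item the fact that $H^{+}$ is coercive through its quartic terms, so the unbounded region and the cutoff $|\alpha|<(p/L)^{1/4}$ contribute only $n^{-2-\eta/2}\mathrm{poly}(L)$.
\end{itemize}
The delicate point is uniformity of the Step 1 linearization when $|\alpha_i|$ is large but below $(p/L)^{1/4}$, where $\wT_i$ is no longer small. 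There I would use the exact conditional bound from Lemma \ref{lem:spherical-tail} rather than the expansion.
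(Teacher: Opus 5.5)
Your proposal is correct and is essentially the paper's argument. Both condition on the cosine components and rescale $\wT_i=\alpha_i(L/p)^{1/4}$. Both apply Lemma \ref{lem:local-laplace} at the two minimizers $\pm\bigl(\sqrt{A_n-2\rho_n},\sqrt{A_n-2\rho_n}\bigr)$, with Hessian determinant $16(A_n-\rho_n)(A_n-2\rho_n)$. Both dispose of the remaining region using \eqref{eq:watson-md-crude-bound}, the trivial bound on the conditional probability near $\alpha_1\alpha_2\approx a_n$, and a uniform gap above the minimum value $2$.

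The only difference is bookkeeping. You keep $c_n=\sqrt{s_n^\circ/L}$ in the exponent and expand the minimum value $c_n^2/4+\rho_nc_n-\rho_n^2$ around $c_n^2=a_n^2$. The paper instead expands $\sqrt{s_n^\circ}$ around $a_n\sqrt L$ and carries the factor $L^{(a_n-\alpha_1\alpha_2)/(2A_n)}e^{-(a_n-\alpha_1\alpha_2)x/(2a_n)}$ in the prefactor $h_n$, evaluated at the minimizers where $a_n-\alpha_1\alpha_2=A_n$. Both give the same constant $K(\rho)$. Your concern about large $|\alpha_i|$ is unnecessary: there the marginal decay of $g_n$ suffices on its own.
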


\noindent \textbf{Proof of Proposition \ref{prop:watson-ldp-rho<1}.}
By rotation invariance, we may assume \(\bm\mu_n=\bm e_1\).  
Using \eqref{representation2}, we may assume
\[
 \bm X_1^\top\bm X_2
 =
 \wT_1\wT_2
 +
 \sqrt{1-\wT_1^2}\sqrt{1-\wT_2^2}\cdot Z,
\]
where \(Z=\bm V_1^\top\bm V_2\), and \(\bm V_1,\bm V_2\) are independent
uniform random variables on \(\mb S^{p-2}\), independent of
\(\wT_1,\wT_2\).  

 It suffices to prove the right-tail asymptotic
\begin{equation}\label{eq:watson-rho-less-one-right-goal}
\mb P_{\kappa_n,\bm e_1}
\left(
 \bm X_1^\top\bm X_2>t_n^\circ
\right)
=
\frac{K(\rho)+o(1)}{n^2}
\exp\left\{
-\frac{A(\rho)}{2a(\rho)}x
\right\}.
\end{equation}
Put \[d:=p-1 \qquad \text{and} \qquad  L=L_n:=\log n.\] 
Also define
\[
A_n:= A \lb \rho_n \rb, \qquad a_n:=a \lb \rho_n \rb
\]
Make the change of variables
\[
 \wT_i=\alpha_i \cdot \frac{L^{1/4}}{p^{1/4}},
 \qquad i=1,2.
\]
Then
\[
\begin{aligned}
\mb P_{\kappa_n,\bm e_1}
\left(
 \bm X_1^\top\bm X_2>t_n^\circ
\right) 
&=
\iint_{\mb R^2}
 \Pi_n(\alpha_1,\alpha_2) \cdot
 g_n(\alpha_1) \cdot g_n(\alpha_2)
 \,d\alpha_1d\alpha_2,
\end{aligned}
\]
where $g_n$ is defined in Lemma \ref{lem:watson-coordinate-md}, and
\begin{align} \label{Pi}
\Pi_n(\alpha_1,\alpha_2)
:=
\mb P_{\kappa_n,\bm e_1}
\left(
 \bm X_1^\top\bm X_2>t_n^\circ
 \,\middle|\,
 \wT_1=\alpha_1 \cdot \frac{L^{1/4}}{p^{1/4}},\, \wT_2=\alpha_2 \cdot \frac{L^{1/4}}{p^{1/4}}
\right).
\end{align}

{\it \underline{Step 1: Extracting the main contribution}.} Fix a sufficiently large \(M<\infty\) and a sufficiently small \(\delta>0\). Set
\[
 \mc D_{M,\delta}^{(n)}
 :=
 \left\{
 |\alpha_1|\le M,\ |\alpha_2|\le M,\ 
 a_n-\alpha_1\alpha_2\ge \delta
 \right\}.
\]
We will show that
\begin{align} \label{main-contribution}
\iint_{\mb R^2}
 \Pi_n(\alpha_1,\alpha_2)
 g_n(\alpha_1)g_n(\alpha_2)
 \,d\alpha_1d\alpha_2 = \iint_{\mc D_{M,\delta}^{(n)}}
 \Pi_n(\alpha_1,\alpha_2)
 g_n(\alpha_1)g_n(\alpha_2)
 \,d\alpha_1d\alpha_2 +o \lb n^{-2} \rb.
\end{align}
To show this, it suffices to prove 
\begin{align} \label{tail-contribution-1}
    \iint_{\{|\alpha_1|>M\}\cup\{|\alpha_2|>M\}}
 \Pi_n(\alpha_1,\alpha_2)
 g_n(\alpha_1)g_n(\alpha_2)
 \,d\alpha_1d\alpha_2
 &=
 o(n^{-2})
\end{align}
and
\begin{align} \label{tail-contribution-2}
\iint_{\{a_n-\alpha_1\alpha_2<\delta\}}
 \Pi_n(\alpha_1,\alpha_2)
 g_n(\alpha_1)g_n(\alpha_2)
 \,d\alpha_1d\alpha_2  = o(n^{-2}).
\end{align}
Let us start with \eqref{tail-contribution-1}.  Since
\(0\le\Pi_n(\alpha_1,\alpha_2)\le1\), \eqref{eq:watson-md-crude-bound} in  Lemma
\ref{lem:watson-coordinate-md} gives
\begin{align*}
& \iint_{\{|\alpha_1|>M\}\cup\{|\alpha_2|>M\}}
 \Pi_n(\alpha_1,\alpha_2)
 g_n(\alpha_1)g_n(\alpha_2)
 \,d\alpha_1d\alpha_2 \\
 \leq & 2 \int_{|\alpha_1|>M} g_n(\alpha_1)\, d\alpha_1 \\
 \lesssim & \sqrt{L} \cdot  \int_{|\alpha_1|>M} \exp \lb -cL \lb \alpha_1^2 + \alpha_1^4  \rb \rb \, d \alpha_1 \\
 \leq & \sqrt{L} \cdot \exp \left[ \frac{-cL}{2} \cdot (M^2+M^4) \right] \cdot \intop_{\mb R} \exp \left[ \frac{-cL}{2} \cdot (\alpha_1^2+ \alpha_1^4) \right] \, d\alpha_1.
\end{align*} 
The last term is of order $o\lb n^{-2} \rb$ for all sufficiently large $M$. This proves \eqref{tail-contribution-1}.

We now prove \eqref{tail-contribution-2}. Again, the fact that \(0\le\Pi_n(\alpha_1,\alpha_2)\le1\) and Lemma \ref{lem:watson-coordinate-md} give
\begin{align*}
\iint_{\{a_n-\alpha_1\alpha_2<\delta\}}
 \Pi_n(\alpha_1,\alpha_2)
 g_n(\alpha_1)g_n(\alpha_2)
 \,d\alpha_1d\alpha_2 
 \lesssim_{\ve}
 L \cdot 
& \iint_{\{a_n-\alpha_1\alpha_2<\delta\}}
\exp\left\{
-(1-\ve)L \cdot \left[
\rho_n(\alpha_1^2+\alpha_2^2)
+\frac{\alpha_1^4+\alpha_2^4}{4}
\right]
\right\}
\,d\alpha_1d\alpha_2
\end{align*}
for all sufficiently small $\ve>0$.

Choose $\delta$ sufficiently small that $a_n>\delta$ for all sufficiently large $n$. Observe that when $a_n - \alpha_1 \alpha_2 < \delta$,
\[
\begin{aligned}
\rho_n(\alpha_1^2+\alpha_2^2)
+\frac{\alpha_1^4+\alpha_2^4}{4}
&\ge 2\rho_n \alpha_1 \alpha_2 + \frac{\alpha_1^2 \alpha_2^2}{2} \geq 2\rho_n(a_n-\delta)+\frac{(a_n-\delta)^2}{2}.
\end{aligned}
\]
Letting \(n\to\infty\) and then \(\delta\downarrow0\), the last term
converges to
\[
2\rho a(\rho)+\frac{a(\rho)^2}{2}
=
4+2\rho^2
>2.
\]
Lemma \ref{lem:strong-convex} then gives
\[
\iint_{\{a_n-\alpha_1\alpha_2<\delta\}}
 \Pi_n(\alpha_1,\alpha_2)
 g_n(\alpha_1)g_n(\alpha_2)
 \,d\alpha_1d\alpha_2 = O_{\ve,\rho} \lb L\cdot e^{-(2+\zeta)\cdot L} \rb = o \lb n^{-2} \rb
\]
for some $\zeta>0$ and all sufficiently small $\delta$.
This completes the proof of \eqref{tail-contribution-2}.

{\it \underline{Step 2: Exponential representation}.} We claim that, uniformly on $\mc D^{(n)}_{M,\delta}$,
\begin{align} \label{exponential form}
\Pi_n(\alpha_1,\alpha_2)g_n(\alpha_1)g_n(\alpha_2)
=
\mc K_n(\alpha_1,\alpha_2)\lb 1+o(1)\rb,
\end{align}
where
\begin{align*}
\mc K_n(\alpha_1,\alpha_2)
&:=
 \frac{\rho_n}{\pi} \cdot 
 \frac{L}
 {(a_n-\alpha_1\alpha_2)\sqrt{2\pi L}}  
\times
 \exp\left\{
 -L\left[
 \rho_n(\alpha_1^2+\alpha_2^2)
 +\frac{\alpha_1^4+\alpha_2^4}{4}
 +\frac12(a_n-\alpha_1\alpha_2)^2
 \right]
 \right\} \\
&\quad\times
 L^{(a_n-\alpha_1\alpha_2)/(2A_n)}
 \cdot \exp\left\{
 -\frac{a_n-\alpha_1\alpha_2}{2a_n}x
 \right\}.
\end{align*}
Indeed, conditionally on \(\wT_1,\wT_2\), the event
\( \la \bm X_1^\top\bm X_2>t_n^\circ \ra\) is equivalent to
\[
\sqrt{d} \cdot Z>
\sqrt{d} \cdot \frac{
t_n^\circ-\wT_1\wT_2
}{
\sqrt{1-\wT_1^2}\sqrt{1-\wT_2^2}
}.
\]
A direct computation gives
\[
\sqrt{s_n^\circ}
=
a_n\sqrt L
-
\frac{
\frac{a_n}{A_n}\log L-x
}{
2a_n\sqrt L
}
+o(L^{-1/2}).
\]
Moreover, uniformly for \(|\alpha_1|,|\alpha_2|\le M\),
\[
\sqrt{1-\wT_1^2}\sqrt{1-\wT_2^2}
=
1+o\lb L^{-1} \rb
\]
because \(p/L^3\to\infty\).  Therefore, uniformly on
\(\mc D_{M,\delta}^{(n)}\),
\[
\begin{aligned}
\sqrt d\cdot 
\frac{
t_n^\circ-\wT_1\wT_2
}{
\sqrt{1-\wT_1^2}\cdot \sqrt{1-\wT_2^2}
}
&=
(a_n-\alpha_1\alpha_2)\sqrt L  -
\frac{
\frac{a_n}{A_n}\log L-x
}{
2a_n\sqrt L
}
+o\lb L^{-1/2} \rb.
\end{aligned}
\]
Consequently, since $a_n-\alpha_1\alpha_2 \geq \delta$, the self-normalized moderate-deviation result (see Theorem 2.3 in \cite{jing2003self}) yields
\[
\begin{aligned}
\Pi_n(\alpha_1,\alpha_2)
&=
 \frac{1+o(1)}
 {(a_n-\alpha_1\alpha_2)\sqrt{2\pi L}} \cdot 
 \exp\left\{
 -\frac L2(a_n-\alpha_1\alpha_2)^2
 +
 \frac{a_n-\alpha_1\alpha_2}{2a_n}
 \left(
 \frac{a_n}{A_n}\log L-x
 \right)
 \right\}.
\end{aligned}
\]
uniformly on $\mc D^{(n)}_{M,\delta}$. We then obtain \eqref{exponential form} using the first claim in Lemma \ref{lem:watson-coordinate-md}.

{\it  \underline{Step 3: Laplace's method}.} Define
\begin{align*}
    H_n(\alpha_1,\alpha_2):&=  \rho_n(\alpha_1^2+\alpha_2^2)
+\frac{\alpha_1^4+\alpha_2^4}{4}
+\frac12(a_n-\alpha_1\alpha_2)^2, \\
h_n(\alpha_1,\alpha_2):&=   \frac{\rho_n}{\pi} \cdot 
 \frac{L}
 {(a_n-\alpha_1\alpha_2)\sqrt{2\pi L}} \cdot  L^{(a_n-\alpha_1\alpha_2)/(2A_n)}
 \cdot \exp\left\{
 -\frac{a_n-\alpha_1\alpha_2}{2a_n}x
 \right\}.
\end{align*}
A direct calculation shows that \(H_n\) has exactly two global minimizers,
\[
\bm \alpha^{\star}_n := \left(\sqrt{A_n-2\rho_n},\sqrt{A_n-2\rho_n}\right),
 \qquad
 \bm \alpha^{\star\star}_n:=\left(-\sqrt{A_n-2\rho_n},-\sqrt{A_n-2\rho_n}\right),
\]
and at both points,
\[
H_n=2,
\qquad
a_n-\alpha_1\alpha_2=A_n.
\]
Choosing $\delta$ sufficiently small that \(\delta<A(\rho)/2\) ensures that both minimizers lie in the cutoff
region 
$$ \la a_n-\alpha_1\alpha_2\ge\delta\ra $$
for all sufficiently large \(n\).

At either minimizer, the Hessian matrix satisfies
\[
\nabla^2H_n
=
\begin{pmatrix}
4A_n-6\rho_n & -2\rho_n\\
-2\rho_n & 4A_n-6\rho_n
\end{pmatrix},
\qquad
\det(\nabla^2H_n)
=
16(A_n-\rho_n)(A_n-2\rho_n).
\]
We apply Lemma \ref{lem:local-laplace} and show that the main contribution to the double integral involving $\mc K_n $ comes from the two balls around $\bm \alpha^{\star}_n$ and $\bm \alpha^{\star \star}_n$ (with appropriate radius). 

Let us verify that there exists \(r>0\), sufficiently small, such that the
balls
\[
    B_n^\star:=B(\bm\alpha_n^\star,r),
    \qquad
    B_n^{\star\star}:=B(\bm\alpha_n^{\star\star},r)
\]
are disjoint and contained in $\mc D^{(n)}_{M,\delta}$ for all sufficiently large $n$.

Write $\bm \alpha^{\star}_n = \lb \alpha^\star_{1n}, \alpha^\star_{2n} \rb$ and $\bm \alpha^{\star \star}_n = \lb \alpha^{\star \star}_{1n}, \alpha^{\star \star}_{2n} \rb$. Observe that 
\[
\|  \bm \alpha^{\star}_n - \bm \alpha^{\star \star}_n \| =  2\sqrt{2A_n-4\rho_n}; \qquad a_n- \alpha^\star_{1n} \alpha^\star_{2n} = a_n - \alpha^{\star \star}_{1n} \alpha^{\star \star}_{2n} = A_n.
\]
Since $A_n \to A(\rho)>0$ for $\rho \in (0,1)$ and $A_n-2\rho_n \to A(\rho)-2\rho>0$, such an $r$ exists. Now write
\begin{align*}
    \iintop_{\mc D_{M,\delta}^{(n)}} \mc K_n(\alpha_1,\alpha_2) \,d\alpha_1d\alpha_2
    &=  \iintop_{B_n^\star} \mc K_n(\alpha_1,\alpha_2) \,d\alpha_1d\alpha_2
    + \iintop_{B_n^{\star \star}} \mc K_n(\alpha_1,\alpha_2) \,d\alpha_1d\alpha_2 \\
    &+ \iint_{\mc D_{M,\delta}^{(n)}
\setminus
(B_n^\star\cup B_n^{\star\star})}
\mc K_n(\alpha_1,\alpha_2)
\,d\alpha_1d\alpha_2.
\end{align*}
It is straightforward, though tedious, to check that conditions (i)--(iv) in Lemma \ref{lem:local-laplace} apply to $H_n$ and $h_n$. A direct computation yields
\[
\begin{aligned}
h_n(\bm\alpha_n^\star)
&=
\frac{\rho_n}{\pi} \cdot 
\frac{L}
{A_n\sqrt{2\pi L}}
\cdot 
L^{A_n/(2A_n)}
\cdot 
\exp\left\{
-\frac{A_n}{2a_n}x
\right\} \\
&=
\frac{\rho_n}{\pi} \cdot
\frac{L}
{A_n\sqrt{2\pi}} \cdot
\exp\left\{
-\frac{A_n}{2a_n}x
\right\}.
\end{aligned}
\]
Therefore,
\[
\begin{aligned}
\iintop_{B_n^\star}
\mc K_n(\alpha_1,\alpha_2)\,d\alpha_1d\alpha_2
&= \iintop_{B_n^\star} h_n(\alpha_1, \alpha_2 )\cdot \exp \lb -L \cdot H_n(\alpha_1, \alpha_2) \rb \, d\alpha_1 d\alpha_2 \\
&=
\frac{2\pi}{L}
\cdot
\frac{
h_n(\bm\alpha_n^\star)
}{
\det\{\nabla^2H_n(\bm\alpha_n^\star)\}^{1/2}
}
\cdot 
e^{-2L}
\cdot \lb 1+o(1)\rb \\
&=
\frac{2\pi}{L}
\cdot
\frac{
\frac{\rho_n}{\pi} \cdot
\frac{L}{A_n\sqrt{2\pi}} \cdot
\exp\left\{-\frac{A_n}{2a_n}x\right\}
}{
4\sqrt{(A_n-\rho_n)(A_n-2\rho_n)}
}
\cdot n^{-2}
\cdot \lb 1+o(1)\rb \\
&= 
\frac{\rho_n}
{2\sqrt{2\pi}\,A_n
\sqrt{(A_n-\rho_n)(A_n-2\rho_n)}}
\cdot n^{-2}
\cdot \exp\left\{-\frac{A_n}{2a_n}x\right\}
\cdot \lb 1+o(1)\rb \\
&= 
\frac{K(\rho_n)}{2n^2} \cdot \exp\left\{-\frac{A_n}{2a_n}x\right\}
\cdot \lb 1+o(1)\rb
\end{aligned}
\]
where $K$ is defined in \eqref{K rho}. Similarly,
\[
\begin{aligned}
\iint_{B_n^{\star\star}}
\mc K_n(\alpha_1,\alpha_2)\,d\alpha_1d\alpha_2
&=
\frac{\rho_n}
{2\sqrt{2\pi}\,A_n
\sqrt{(A_n-\rho_n)(A_n-2\rho_n)}}
\cdot n^{-2} \cdot
\exp\left\{-\frac{A_n}{2a_n}x\right\}
\cdot \lb 1+o(1)\rb \\
&= \frac{K(\rho_n)}{2n^2} \cdot \exp\left\{-\frac{A_n}{2a_n}x\right\}
\cdot \lb 1+o(1)\rb.
\end{aligned}
\]
To finish the proof, it suffices to show that
\[
\begin{aligned}
& \iint_{\mc D_{M,\delta}^{(n)}
\setminus
(B_n^\star\cup B_n^{\star\star})}
\mc K_n(\alpha_1,\alpha_2)
\,d\alpha_1d\alpha_2 =
o(n^{-2}).
\end{aligned}
\]
To prove this, note that $H_n$ is a smooth function that attains its minimum value $2$ at $\bm \alpha^{\star}_n$ and $\bm \alpha^{\star \star}_n$, so away from the two balls $B_n^\star, B_n^{\star \star}$, we must have 
\[
\inf_{\mc D_{M,\delta}^{(n)}
\setminus
(B_n^\star\cup B_n^{\star\star})} H_n(\alpha_1,\alpha_2) \geq 2 +  \zeta_1
\]
for some $\zeta_1>0$ independent of $n$. 

Applying Lemma \ref{lem:strong-convex} gives
\[
\begin{aligned}
 \iintop_{\mc D_{M,\delta}^{(n)}
\setminus
(B_n^\star\cup B_n^{\star\star})}
\mc K_n(\alpha_1,\alpha_2)
\,d\alpha_1d\alpha_2 
&= (1+o(1)) \cdot \iintop_{\mc D_{M,\delta}^{(n)}
\setminus
(B_n^\star\cup B_n^{\star\star})} \Pi_n(\alpha_1,\alpha_2)g_n(\alpha_1)g_n(\alpha_2) \, d\alpha_1 d\alpha_2  \\
&\lesssim
\frac{\mbox{poly}(L)}{\exp \left[ (2+ \frac{\zeta_1}{2})L\right]}
=
o(n^{-2}).
\end{aligned}
\]
Finally, putting everything together and ignoring all the $o \lb n^{-2} \rb$ terms, we deduce the desired asymptotic result. This completes the proof. \( \hfill \square \)

\subsection{Poisson approximation} \label{sec:Poisson-approximation-rho<1}

We now prove Theorem \ref{thm:watson-rho<1}.  Without loss of generality,
assume \(\bm\mu_n=\bm e_1\).  Fix \(x\in\mb R\), and let
\(s_n^\circ=s_n^\circ(x)\) and \(t_n^\circ=t_n^\circ(x)\) be as in
Proposition \ref{prop:watson-ldp-rho<1}.  Throughout this subsection, put
\[
    L=L_n:=\log n,\qquad A_n:=A(\rho_n),\qquad a_n:=a(\rho_n),
    \qquad \zeta_n:=\sqrt{A_n-2\rho_n}.
\]
Here $A(\rho)$ and $a(\rho)$ are defined in \eqref{A-a}. By \eqref{representation2}, we can write
\[
 \bm X_i = \lb \wT_i, \sqrt{1- \wT_i^2} \cdot \bm V_i \rb
\]
where $\bm V_i \sim \mbox{Unif} \lb \mb S^{p-2} \rb$, and the $\wT_i$ are i.i.d. with the density in 
\eqref{wT} and are independent of the $\bm V_i$.

For each \(i\), write
\[
    \wT_i=\alpha_i \cdot \frac{L^{1/4}}{p^{1/4}}.
\]
Note that the $\alpha_i$ are i.i.d. with density $g_n(.)$ given in Lemma \ref{lem:watson-coordinate-md}. 

Define the non-local exceedance indicators (hence the abbreviation ``nl") by
\[
    I_{ij}^{\mathrm{nl}}
    :=
    \mathbf 1_{\la |S_{ij}|>t_n^\circ\ra},
    \qquad
    W^{\mathrm{nl}}
    :=
    \sum_{1\le i<j\le n}I_{ij}^{\mathrm{nl}}.
\]
Then
\[
\left\{
pM_n^2-a_n^2L+\frac{a_n}{A_n}\log L\le x
\right\}
=
\{M_n\le t_n^\circ\}
=
\{W^{\mathrm{nl}}=0\}.
\]
We now introduce a truncation for the Poisson approximation to create independence.  Put
\[
    I_{ij}^{+}:=\mathbf 1_{\la S_{ij}>t_n^\circ\ra},
    \qquad
    I_{ij}^{-}:=\mathbf 1_{\la S_{ij}<-t_n^\circ\ra}.
\]
Thus 
$$I_{ij}^{\mathrm{nl}}=I_{ij}^{+}+I_{ij}^{-}.$$ 
Choose \(r_0>0\) sufficiently small, to be specified below, and define 
\[
\mc B_n^{+}
:=
B\left(  \bm \alpha^\star_n ,r_0\right)
\cup
B\left(\bm \alpha^{\star \star}_n,r_0\right)
\]
where $\bm \alpha^\star_n=\lb \zeta_n, \zeta_n \rb$ and $\bm \alpha^{\star \star}_n= \lb -\zeta_n, -\zeta_n \rb$ are the saddle points defined in the proof of Proposition \ref{prop:watson-ldp-rho<1}.

We further decompose the indicators $I^+_{ij}$ and $I^-_{ij}$ as 
\begin{align*}
    I_{ij}^{+}:=  I_{ij}^{+,\mathrm{tr}} +  I_{ij}^{+,\mathrm{rem}},
    \qquad 
    I_{ij}^{-}:=  I_{ij}^{-,\mathrm{tr}} +  I_{ij}^{-,\mathrm{rem}}
\end{align*}
where
\begin{align*}
  &  I_{ij}^{+,\mathrm{tr}}
    :=
    I_{ij}^{+} \cdot \mathbf 1_{\la (\alpha_i,\alpha_j)\notin \mc B_n^{+}\ra},
    \qquad
    I_{ij}^{+,\mathrm{rem}}
    :=
    I_{ij}^{+} \cdot \mathbf 1_{\la (\alpha_i,\alpha_j) \in\mc B_n^{+}\ra}, \\
    & I_{ij}^{-,\mathrm{tr}}
    :=
    I_{ij}^{-} \cdot \mathbf 1_{\la (\alpha_i,-\alpha_j)\notin\mc B_n^{+}\ra},
    \qquad
    I_{ij}^{-,\mathrm{rem}}
    :=
    I_{ij}^{-} \cdot \mathbf 1_{\la (\alpha_i,-\alpha_j)\in\mc B_n^{+}\ra}.
\end{align*}
Here the abbreviation ``tr" stands for ``truncated," and ``rem" stands for ``remainder." Put
\[
I_{ij}^{\rm tr}:= I_{ij}^{\rm +,tr} + I_{ij}^{\rm -,tr}; \qquad I_{ij}^{\rm rem}:= I_{ij}^{\rm +,rem} + I_{ij}^{\rm -,rem}
\]
With this notation, we have 
\[
    W^{\mathrm{nl}}=W^{\mathrm{tr}}+W^{\mathrm{rem}}, \qquad I_{ij}^{\rm nl} = I_{ij}^{\rm tr} + I_{ij}^{\rm rem},
\]
where
\[
    W^{\mathrm{tr}}
    :=
    \sum_{1\le i<j\le n}I_{ij}^{\mathrm{tr}},
    \qquad
    W^{\mathrm{rem}}
    :=
    \sum_{1\le i<j\le n}I_{ij}^{\mathrm{rem}}.
\]

{\it \underline{Step 1: The truncated sum $W^{\rm tr}$ is negligible}.}
We first note that
\[
    \mb P_{\kappa_n,\bm e_1}\lb I_{12}^{+,\mathrm{tr}}=1\rb = o \lb n^{-2} \rb
\]
for every sufficiently small $r_0$.

Indeed, for a sufficiently large $M>0$, Step 1 in the proof of Proposition
\ref{prop:watson-ldp-rho<1} gives
\begin{align*}
    \mb P_{\kappa_n,\bm e_1}\lb I_{12}^{+,\mathrm{tr}}=1\rb &= \iintop_{\mc D^{(n)}_{M,\delta} \cap \la \lb \alpha_1, \alpha_2 \rb  \notin \mc B_n^{+} \ra } \Pi_n \lb \alpha_1, \alpha_2 \rb g_n(\alpha_1) g_n(\alpha_2) \, d \alpha_1 d \alpha_2 + o \lb n^{-2} \rb.
\end{align*}
The double integral above is also of order $o \lb n^{-2} \rb$, as shown in Step 3 of the proof of Proposition
\ref{prop:watson-ldp-rho<1}'s proof.

By the symmetry of the Watson distribution,
\[
    \mb P_{\kappa_n,\bm e_1} \lb I_{12}^{-,\mathrm{tr}}=1 \rb
    =
    \mb P_{\kappa_n,\bm e_1}\lb I_{12}^{+,\mathrm{tr}}=1 \rb 
    =
    o \lb n^{-2} \rb.
\]
Therefore
\begin{align} \label{truncation negligible}
    \mb P_{\kappa_n,\bm e_1}\lb I_{12}^{\mathrm{tr}}=1 \rb=o \lb n^{-2} \rb.
\end{align}
Consequently,
\[
    \mb E_{\kappa_n,\bm e_1}W^{\mathrm{tr}}
    =
    \binom n2 \cdot 
    \mb P_{\kappa_n,\bm e_1} \lb I_{12}^{\mathrm{tr}}=1 \rb
    =
    o(1),
\]
and hence
\begin{equation}\label{eq:watson-nonlocal-Wrem}
    \mb P_{\kappa_n,\bm e_1} \lb W^{\mathrm{tr}}>0 \rb
    \le
    \mb E_{\kappa_n,\bm e_1}W^{\mathrm{tr}}
    =
    o(1).
\end{equation}
It follows that
\begin{equation}\label{eq:watson-nonlocal-Wtr-Wnl}
\left|
\mb P_{\kappa_n,\bm e_1}\lb W^{\mathrm{nl}}=0 \rb
-
\mb P_{\kappa_n,\bm e_1}\lb W^{\mathrm{rem}}=0 \rb
\right|
\le
\mb P_{\kappa_n,\bm e_1}\lb W^{\mathrm{tr}}>0 \rb
=o(1).
\end{equation}

{\it \underline{Step 2: Poisson approximation}.}  We now apply Lemma \ref{lem:AGG} to the remainder events
\[
    \la I_{ij}^{\mathrm{rem}}:1\le i<j\le n \ra.
\]
As before, let $\mc I_n:=\{(i,j):1\le i<j\le n\}$.
For \(\gamma=(i,j)\in\mc I_n\), let
\[
N_\gamma
:=
\{(k,l)\in\mc I_n:\{i,j\}\cap\{k,l\}\ne\emptyset\}.
\]
Then \(|N_\gamma|\le 2n\), and \(I_\gamma^{\mathrm{rem}}\) is independent of
\(\{I_\beta^{\mathrm{rem}}:\beta\notin N_\gamma\}\).  Put
\[
    \lambda_n^{\mathrm{rem}}
    :=
    \mb E_{\kappa_n,\bm e_1}W^{\mathrm{rem}}
    =
    \binom n2 \cdot
    \mb P_{\kappa_n,\bm e_1} \lb  I_{12}^{\mathrm{rem}}=1 \rb.
\]
Lemma \ref{lem:AGG} then gives
\[
\left|
\mb P_{\kappa_n,\bm e_1} \lb W^{\mathrm{rem}}=0 \rb
-
\exp\lb -\lambda_n^{\mathrm{rem}} \rb
\right|
\le
b_{1n}^{\mathrm{rem}}+b_{2n}^{\mathrm{rem}},
\]
where
\[
    b_{1n}^{\mathrm{rem}}
    \lesssim
    n^3 \cdot \mb P_{\kappa_n,\bm e_1} \lb I_{12}^{\mathrm{rem}}=1 \rb^2, 
    \qquad  
    b_{2n}^{\mathrm{rem}}
    \lesssim
    n^3 \cdot 
    \mb P_{\kappa_n,\bm e_1}
    \left( I_{12}^{\mathrm{rem}}=1,I_{13}^{\mathrm{rem}}=1\right).
\]
Note that, by \eqref{truncation negligible},
\[
0\leq
\mb P_{\kappa_n,\bm e_1} \lb  I_{12}^{\mathrm{nl}}=1 \rb
-
\mb P_{\kappa_n,\bm e_1} \lb I_{12}^{\mathrm{rem}}=1 \rb
=
\mb P_{\kappa_n,\bm e_1} \lb I_{12}^{\mathrm{tr}}=1 \rb
=
o(n^{-2}).
\]
Thus, the preceding display, together with Proposition \ref{prop:watson-ldp-rho<1}, implies
\[
\begin{aligned}
\lambda_n^{\mathrm{rem}}
=
\binom n2 \cdot 
\mb P_{\kappa_n,\bm e_1}\lb I_{12}^{\mathrm{rem}}=1 \rb 
\to
K(\rho) \cdot
\exp\left\{
-\frac{A(\rho)}{2a(\rho)} \cdot x
\right\}.
\end{aligned}
\]
Moreover, 
\begin{align*}
b_{1n}^{\mathrm{rem}}
&\lesssim   n^3 \cdot \mb P_{\kappa_n,\bm e_1} \lb I_{12}^{\mathrm{rem}}=1 \rb^2  
\lesssim
n^3 \cdot 
\mb P_{\kappa_n,\bm e_1}(I_{12}^{\mathrm{nl}}=1)^2
=
O(n^{-1})
=
o(1).
\end{align*}
It remains to show that \(b_{2n}^{\mathrm{rem}}\to0\).  By symmetry,
\[
\begin{aligned}
\mb P_{\kappa_n,\bm e_1}
\left(I_{12}^{\mathrm{rem}}=1,I_{13}^{\mathrm{rem}}=1\right)
&=
4\cdot 
\mb P_{\kappa_n,\bm e_1}
\left(I_{12}^{+,\mathrm{rem}}=1,I_{13}^{+,\mathrm{rem}}=1\right).
\end{aligned}
\]
Thus it suffices to prove
\begin{equation}\label{eq:watson-nonlocal-right-shared}
\mb P_{\kappa_n,\bm e_1}
\left(I_{12}^{+,\mathrm{rem}}=1,I_{13}^{+,\mathrm{rem}}=1\right)
=
o(n^{-3}).
\end{equation}

{\it \underline{Step 3: Proving \eqref{eq:watson-nonlocal-right-shared}}.} For a fixed $\bm x  \in \mb S^{p-1}$, write 
\[
\bm x = \lb x_1 , \sqrt{1-x_1^2} \cdot \bm v_x \rb.
\]
As in the proof of Theorem \ref{thm: Watson}, conditioning on $\bm X_1$ gives
\[
\mb P_{\kappa_n,\bm e_1}
\left(I_{12}^{+,\mathrm{rem}}=1,I_{13}^{+,\mathrm{rem}}=1\right)
=
\mb E_{\kappa_n,\bm e_1}
\left[
\eta_n^{\mathrm{rem}}(\bm X_1)^2 \right]
\]
where
\begin{align*}
\eta_n^{\mathrm{rem}}(\bm x)
:&=
\mb P_{\kappa_n,\bm e_1}
\left(
\bm x^\top\bm X_1 >t_n^\circ,\,
\lb \frac{x_1 \cdot p^{1/4}}{L^{1/4}}, \frac{\wT_1 \cdot p^{1/4}}{L^{1/4}} \rb \in\mc B_n^{+}
\right) \\
&= \mb P_{\kappa_n,\bm e_1}
\left(
\bm x^\top\bm X_1 >t_n^\circ,\,
\lb \frac{x_1 \cdot p^{1/4}}{L^{1/4}}, \alpha_1 \rb \in\mc B_n^{+}
\right)
\end{align*}
For a fixed $\bm x \in \mb S^{p-1}$, with $\widetilde{x}_1:= \lb x_1 \cdot p^{1/4} \rb/L^{1/4}$, we can write
\begin{align*}
    \eta_n^{\mathrm{rem}}(\bm x) &= \mb P_{\lb \wT_1, \bm V_1 \rb} \lb \bm v_x^\top \bm V_1 > \frac{t_n^\circ - x_1 \wT_1}{\sqrt{1-x_1^2}\cdot \sqrt{1-\wT_1^2}}, \, \lb \widetilde{x}_1, \alpha_1 \rb \in \mc B_n^{+}  \rb  \\
    &=  \intop_{\la (\widetilde{x}_1,\alpha_1)\in\mc B_n^+\ra}
\Psi_{p-1}\left(
\sqrt{p-1}\cdot
\frac{
t_n^\circ-
\widetilde{x}_1 \alpha_1 \cdot \sqrt{L/p}
}{
\sqrt{1-\widetilde{x}_1^2\cdot \sqrt{L/p}}\cdot \sqrt{1-\alpha_1^2\sqrt{L/p}}
}
\right)
\cdot g_n(\alpha_1)\,d\alpha_1 
\end{align*}
where $g_n$ is the density in Lemma \ref{lem:watson-coordinate-md} and $\Psi_{p-1}$ is defined in \eqref{Psi} with $d=p-1$.

From the integral representation of $\eta_n^{\mathrm{rem}}(\bm x)$, we see that, on the set $\la (\widetilde{x}_1,\alpha_1)\in\mc B_n^+\ra$, it is necessary that
\[
\widetilde{x}_1 \alpha_1 = \zeta_n^2 + O(r_0).
\]
The display above and the relation $a_n- A_n =  \zeta_n^2$ yield
\[
a_n - \widetilde{x}_1 \alpha_1 \geq A_n - Cr_0
\]
for a constant $C>0$ depending only on $\rho$.

Now, observe that
\[
    \sqrt{p}\,t_n^\circ
    =
    \sqrt{s_n^\circ}
    =
    a_n\sqrt L
    +O\left(\frac{\log L}{\sqrt L}\right).
\]
Thus, uniformly on \( \lb \widetilde{x}_1,\alpha_1 \rb \in\mc B_n^+\),
\[
\sqrt{p-1}\,
\frac{
t_n^\circ-
\widetilde{x}_1\alpha_1 \cdot \sqrt{L/p}
}{
\sqrt{1-\widetilde{x}_1^2 \sqrt{L/p}} \cdot \sqrt{1-\alpha_1^2\sqrt{L/p}}
}
\ge
\lb A_n-Cr_0 \rb \sqrt L
+
O\left(\frac{\log L}{\sqrt L}\right).
\]
Choosing $r_0$ sufficiently small so that $A_n-Cr_0$ is bounded away from $0$ gives the tail bound
\[
\Psi_{p-1}\left(
\sqrt{p-1}\cdot
\frac{
t_n^\circ-
\widetilde{x}_1\alpha_1 \cdot \sqrt{L/p}
}{
\sqrt{1-\widetilde{x}_1^2 \sqrt{L/p}} \cdot \sqrt{1-\alpha_1^2\sqrt{L/p}}
}
\right)
\le
\mbox{poly}(L) \cdot 
\exp\left\{
-\frac L2 \lb A_n-Cr_0\rb^2
\right\}.
\]
Consequently,
\[
\begin{aligned}
\eta_n^{\mathrm{rem}}(\bm x)
&\le
\mbox{poly}(L) \cdot  
\exp\left\{
-\frac L2 \lb A_n-Cr_0 \rb^2
\right\}
\cdot 
\intop_{|\alpha_1-\zeta_n|\le r_0\ \text{or}\ |\alpha_1+\zeta_n|\le r_0}
g_n(\alpha_1)\,d\alpha_1.
\end{aligned}
\]
By Lemma \ref{lem:watson-coordinate-md}, for every sufficiently small fixed \(\ve>0\),
\[
g_n(\alpha_1)
\le
C_\ve L^{1/2}
\exp\left\{
-(1-\ve)L\left(\rho_n\alpha_1^2+\frac{\alpha_1^4}{4}\right)
\right\}.
\]
Moreover, on the above integration region,
\[
\rho_n\alpha_1^2+\frac{\alpha_1^4}{4}
\ge
\rho_n\zeta_n^2+\frac{\zeta_n^4}{4}
-Cr_0
\]
for some constant $C$ independent of $n$ (but possibly different from the previous one).

Consequently,
\[
\eta_n^{\mathrm{rem}}(\bm x)
\le
\mbox{poly}(L) \cdot 
\exp\left\{
-L\left[
\rho_n\zeta_n^2+\frac{\zeta_n^4}{4}
+\frac{A_n^2}{2}
-Cr_0-C\ve
\right]
\right\}
\]
uniformly in $\bm x$, whenever
$$
\widetilde{x}_1 \in
[\zeta_n-r_0,\zeta_n+r_0]\cup[-\zeta_n-r_0,-\zeta_n+r_0],
$$
and \(\eta_n^{\mathrm{rem}}(\bm x)=0\) otherwise.

Putting everything together, since
\(\eta_n^{\mathrm{rem}}(\bm x)=0\) unless
\[
\widetilde{x}_1\in
[\zeta_n-r_0,\zeta_n+r_0]
\cup
[-\zeta_n-r_0,-\zeta_n+r_0],
\]
the preceding pointwise bound yields
\begin{align*}
\mb E_{\kappa_n,\bm e_1}
\left[
\eta_n^{\mathrm{rem}}(\bm X_1)^2
\right]
&\le
\mb P_{\kappa_n,\bm e_1}
\left(
\frac{p^{1/4}X_{11}}{L^{1/4}}
\in
[\zeta_n-r_0,\zeta_n+r_0]
\cup
[-\zeta_n-r_0,-\zeta_n+r_0]
\right)\\
&\quad\times
\operatorname{poly}(L)
\exp\left\{
-L\left[
2\rho_n\zeta_n^2+\frac{\zeta_n^4}{2}
+A_n^2-Cr_0-C\ve
\right]
\right\}.
\end{align*}
Moreover, by Lemma~\ref{lem:watson-coordinate-md} and the bound
\[
\rho_n\alpha^2+\frac{\alpha^4}{4}
\ge
\rho_n\zeta_n^2+\frac{\zeta_n^4}{4}-Cr_0
\]
whenever
\(
|\alpha-\zeta_n|\le r_0
\)
or
\(
|\alpha+\zeta_n|\le r_0,
\)
we have
\[
\mb P_{\kappa_n,\bm e_1}
\left(
\frac{p^{1/4}X_{11}}{L^{1/4}}
\in
[\zeta_n-r_0,\zeta_n+r_0]
\cup
[-\zeta_n-r_0,-\zeta_n+r_0]
\right)
\le
\operatorname{poly}(L)
\exp\left\{
-L\left[
\rho_n\zeta_n^2+\frac{\zeta_n^4}{4}
-Cr_0-C\ve
\right]
\right\}.
\]
Consequently,
\[
\mb E_{\kappa_n,\bm e_1}
\left[
\eta_n^{\mathrm{rem}}(\bm X_1)^2
\right]
\le
\operatorname{poly}(L)
\exp\left\{
-L\left[
3\left(
\rho_n\zeta_n^2+\frac{\zeta_n^4}{4}
\right)
+A_n^2-Cr_0-C\ve
\right]
\right\}.
\]
Furthermore,
\begin{align} \label{truncation-advantage}
3\left(\rho_n\zeta_n^2+\frac{\zeta_n^4}{4}\right)+A_n^2
\to
3\cdot\frac{1-\rho^2}{2}
+
2(1+\rho^2)
=
\frac{7+\rho^2}{2} > 3.
\end{align}
Choosing $r_0$ and $\ve$ sufficiently small yields \eqref{eq:watson-nonlocal-right-shared}. This completes the proof. $\hfill$ $\square$

\begin{remark}
    The truncation introduced in Section \ref{sec:Poisson-approximation-rho<1} above removes the contribution outside a small neighborhood of the saddle points. The benefit of the truncation is quantified by \eqref{truncation-advantage}: the expectation decays like $\mbox{polylog}(n) \cdot n^{-3.5-\rho^2/2}$. Without the truncation, we only have the first term, which is asymptotically
    
    $$ \frac 32 \lb 1 - \rho^2 \rb< 3.$$
    This is therefore insufficient to show \eqref{eq:watson-nonlocal-right-shared}.
\end{remark}

\section{Proof of Theorem \ref{thm:Watson-critical}} \label{sec:proof-watson-3}

\subsection{Tail-probability asymptotics}

We begin with a tail-probability asymptotic in the case $\rho=1$.

\begin{prop} \label{prop:large-deviation-critical}
    Assume the conditions in Theorem \ref{thm:Watson-critical} and put
    \[
    s_n^{\circ \circ} := 4 \log n - \frac 12 \log \log n + x, \qquad t_n^{\circ \circ}:= \sqrt{\frac{s_n^{\circ \circ}}{p}}.
    \]
    Then 
    \begin{align} \label{eq:critical-right-tail-goal}
    \mb{P}_{\kappa_n, \bm \mu_n} \lb \left| \bm X_1^\top \bm X_2  \right| \geq t^{\circ \circ}_n \rb = 2K_{\rm cr} \lb \Delta \rb \cdot \frac{e^{-x/2}}{n^2} \cdot (1+o(1))
    \end{align}
    where the quantities $K_{\rm cr}, \Delta$ are defined in Theorem \ref{thm:Watson-critical}.
\end{prop}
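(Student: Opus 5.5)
The proof follows the scheme of Proposition \ref{prop:watson-ldp-rho<1}, with two changes: the saddle point now sits at the origin, and it is degenerate along the diagonal. Assume $\bm\mu_n=\bm e_1$ and put $L=\log n$. By the symmetry of the Watson law it suffices to prove the right tail with constant $K_{\rm cr}(\Delta)e^{-x/2}n^{-2}$. Write $\wT_i=\alpha_i L^{1/4}/p^{1/4}$, so that
\[
\mb P\lb \bm X_1^\top\bm X_2>t_n^{\circ\circ}\rb=\iint \Pi_n(\alpha_1,\alpha_2)\,g_n(\alpha_1)g_n(\alpha_2)\,d\alpha_1 d\alpha_2 ,
\]
with $\Pi_n$ as in \eqref{Pi} but with threshold $t_n^{\circ\circ}$.

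\textbf{Step 1 (localization).} I first discard the region $\{|\alpha_i|>M\}\cup\{2-\alpha_1\alpha_2<\delta\}$ exactly as in Step 1 of the proof of Proposition \ref{prop:watson-ldp-rho<1}, using \eqref{eq:watson-md-crude-bound}. On the second set the exponent is at least $2\rho_n(2-\delta)+(2-\delta)^2/2\to 6>2$.

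On the remaining compact region, Lemma \ref{lem:watson-coordinate-md} and the moderate-deviation step give the analogue of \eqref{exponential form}, now with $\sqrt{s_n^{\circ\circ}}=2\sqrt L-(\tfrac12\log L-x)/(4\sqrt L)+o(L^{-1/2})$. The integrand becomes
\[
\frac{\rho_n}{\pi}\cdot\frac{L}{(2-\alpha_1\alpha_2)\sqrt{2\pi L}}\; L^{(2-\alpha_1\alpha_2)/8}\, e^{-(2-\alpha_1\alpha_2)x/4}\, e^{-L H_n(\alpha_1,\alpha_2)},
\]
where
\[
H_n=2+\rho_n(\alpha_1^2+\alpha_2^2)-2\alpha_1\alpha_2+\frac{\alpha_1^4+\alpha_2^4}{4}+\frac{\alpha_1^2\alpha_2^2}{2}.
\]
Since $\rho_n\to1$, we have $H_n\ge 2+(\alpha_1-\alpha_2)^2+\tfrac14(\alpha_1^4+\alpha_2^4)-o(1)|\alpha|^2$. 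Hence the only minimizer is the origin, and outside any fixed ball $B(0,r)$ the contribution is $\mathrm{poly}(L)e^{-(2+\zeta)L}=o(n^{-2})$.

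\textbf{Step 2 (degenerate Laplace method).} Lemma \ref{lem:local-laplace} cannot be used, because the Hessian $\bigl(\begin{smallmatrix}2\rho_n&-2\\-2&2\rho_n\end{smallmatrix}\bigr)$ becomes singular in the direction $(1,1)$. I will therefore rotate to $v=(\alpha_1+\alpha_2)/\sqrt2$ and $w=(\alpha_1-\alpha_2)/\sqrt2$, and use anisotropic scalings $w=\omega/\sqrt L$ and $v=\sqrt2\,y\,L^{-1/4}$. In these variables the exponent becomes
\[
L(H_n-2)=2w^2L\,(1+o(1))+2\Delta y^2+y^4+o(1)
\]
uniformly on $|\omega|,|y|\le\log L$. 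This uses $2(\rho_n-1)L\,v^2/2\cdot$ from the hypothesis $\sqrt L(\rho_n-1)\to\Delta$ and the diagonal quartic $\alpha^4/2+\alpha^4/4=\ldots$, which equals $v^4/4$. Then $y\mapsto y/\sqrt2$ produces $\Delta y^2+y^4/4$. The cross terms, such as $wv^3$ and $(\rho_n-1)w^2$, are $o(1)$ at these scales.

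At the origin the prefactor equals $\frac{\rho_n}{\pi}\cdot\frac{\sqrt L}{2\sqrt{2\pi}}\,L^{1/4}e^{-x/2}$. The Jacobian and the two integrals contribute $L^{-1/2}\sqrt{\pi/2}$ from the $w$-direction and $L^{-1/4}$ times the $y$-integral. The powers of $L$ cancel exactly, and collecting the constants should give $\frac{1}{4\pi}\int e^{-\Delta y^2-y^4/4}dy$. I will recheck this constant bookkeeping carefully against $K_{\rm cr}$.

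To control the intermediate annulus (outside the scaling window but inside $B(0,r)$), I will use the lower bound
\[
H_n-2\ge c\,w^2+c\,v^4-C|\Delta|v^2/\sqrt L .
\]
Together with the dominated bound $L^{(2-\alpha_1\alpha_2)/8}\le L^{1/4+Cr}$, this makes the tails in $\omega$ and $y$ negligible.

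\textbf{Main obstacle.} The hardest step is this non-quadratic Laplace analysis. It requires a uniform control of the remainder $L\cdot O(|\alpha|^3 w+|\alpha|^6)$, and a uniform expansion of $\Pi_n$ on shrinking neighbourhoods where $\alpha_1\alpha_2\sim L^{-1/2}$. There the factor $L^{(2-\alpha_1\alpha_2)/8}$ equals $L^{1/4}(1+o(1))$, which is fine. However, the moderate-deviation input (Theorem 2.3 in \cite{jing2003self}) must hold uniformly, with the error $o(L^{-1/2})$ absorbed. I would first verify this uniformity, since $p/L^3\to\infty$ gives $\sqrt{(1-\wT_1^2)(1-\wT_2^2)}=1+o(L^{-1})$.
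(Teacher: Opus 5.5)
Your proposal is correct and follows essentially the same route as the paper. You discard large $|\alpha_i|$ and the region near $\alpha_1\alpha_2=2$, write $\wPi_n g_n g_n$ in moderate-deviation form, and rescale $\alpha_1+\alpha_2$ by $L^{-1/4}$ and $\alpha_1-\alpha_2$ by $L^{-1/2}$, so that $L(H_n-2)\to\Delta y^2+2z^2+y^4/4$; this gives exactly $K_{\rm cr}(\Delta)e^{-x/2}n^{-2}$ for each tail.

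The only difference is bookkeeping. The paper integrates over a fixed box $|y|,|z|\le M/\sqrt2$ and sends $M\to\infty$ afterwards, keeping the factor $L^{-\alpha_1\alpha_2/8}=\exp\{-\tfrac{\log L}{16\sqrt L}y^2+\tfrac{\log L}{16L}z^2\}$ exactly rather than bounding it by $L^{Cr}$. This also avoids your claim that the $\sqrt L(\rho_n-1)y^2$ term converges uniformly on $|y|\le\log L$, which needs a rate that the hypothesis $\sqrt L(\rho_n-1)\to\Delta$ does not supply; applying dominated convergence inside your window repairs it.
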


\noindent \textbf{Proof of Proposition \ref{prop:large-deviation-critical}.}
By rotation invariance, we may assume \(\bm\mu_n=\bm e_1\). Put
\[
    L:=\log n.
\]
By \eqref{representation2}, we may write
\[
 \bm X_i
 =
 \left(\wT_i,\sqrt{1-\wT_i^2}\bm V_i\right),
 \qquad i=1,2,
\]
where \(\bm V_1,\bm V_2\) are independent and uniform on \(\mb S^{p-2}\),
independent of \(\wT_1,\wT_2\).  Hence
\[
 \bm X_1^\top\bm X_2
 =
 \wT_1\wT_2
 +
 \sqrt{1-\wT_1^2}\sqrt{1-\wT_2^2}\cdot Z_{p-1},
\]
where \(Z_{p-1}\) is defined in \eqref{Psi}.

As in the proof of Proposition \ref{prop:watson-ldp-rho<1}, define 
\[
    \wT_i=\alpha_i \cdot \frac{L^{1/4}}{p^{1/4}},
    \qquad i=1,2.
\]
Decompose
\begin{align*}
\mb{P}_{\kappa_n, \bm \mu_n} \lb  \bm X_1^\top \bm X_2 \geq t^{\circ \circ}_n \rb=& \iint_{\mb R^2}
\wPi_n(\alpha_1,\alpha_2)g_n(\alpha_1)g_n(\alpha_2)
\,d\alpha_1d\alpha_2 \\
= &
\iint_{ \mc E_{n,M}}
\wPi_n(\alpha_1,\alpha_2)g_n(\alpha_1)g_n(\alpha_2)
\,d\alpha_1d\alpha_2  
+ 
\iint_{\mc E_{n,M}^c}
\wPi_n(\alpha_1,\alpha_2)g_n(\alpha_1)g_n(\alpha_2)
\,d\alpha_1d\alpha_2 
\end{align*}
where
\begin{align} \label{tilde Pi}
\wPi_n(\alpha_1,\alpha_2)  :&= \mb P_{\kappa_n,\bm e_1}
\left(
 \bm X_1^\top\bm X_2>t_n^{\circ \circ}
 \,\middle|\,
 \wT_1=\alpha_1 \cdot \frac{L^{1/4}}{p^{1/4}},\, \wT_2=\alpha_2 \cdot \frac{L^{1/4}}{p^{1/4}} \right)  \\
\mc E_{n,M} :&= \la \lb \alpha_1, \alpha_2 \rb \in \mb{R}^2: \left| \alpha_1 + \alpha_2 \right| \leq  \frac{M}{L^{1/4}};\, \left| \alpha_1 - \alpha_2 \right| \leq \frac{M}{\sqrt{L}}  \ra \nonumber
\end{align}
for some fixed $M>0$.

To finish the proof, it suffices to prove 
\begin{align} \label{mc E contribution}
 & \iint_{ \mc E_{n,M}}
\wPi_n(\alpha_1,\alpha_2)g_n(\alpha_1)g_n(\alpha_2)
\,d\alpha_1d\alpha_2 \nonumber \\
=& \frac{e^{-x/2}}{n^2}\cdot 
\frac{1+o(1)}{2\pi\sqrt{2\pi}} \cdot
\int_{-M/\sqrt{2}}^{M/\sqrt{2}}\int_{-M/\sqrt{2}}^{M/\sqrt{2}}
\exp\left\{
-\Delta y^2-2z^2-\frac{y^4}{4}
\right\}
\,dz\,dy .
\end{align} 
and 
\begin{align} \label{mc E complement contribution}
  \lim_{M \to \infty} \limsup_{n \to \infty} \la n^2 \cdot   \iint_{\mc E_{n,M}^c}
\wPi_n(\alpha_1,\alpha_2)g_n(\alpha_1)g_n(\alpha_2)
\,d\alpha_1d\alpha_2  \ra = 0.
\end{align}

{\it \underline{Proof of \eqref{mc E contribution}}.}   
On $\mc E_{n,M}$, the expansion of $g_n$ in Lemma \ref{lem:watson-coordinate-md} applies, so 
\begin{align} \label{compact 1}
g_n(\alpha_i)
=
\frac{\sqrt{\rho_n L}}{\sqrt{\pi}} \cdot 
\exp\left\{
-L\left(\rho_n\alpha_i^2+\frac{\alpha_i^4}{4}\right)
\right\}
\cdot \lb 1+o(1)\rb,
\qquad i=1,2,
\end{align}
uniformly on $\mc E_{n,M}$.

Also, with $\Psi$ as in \eqref{Psi},
\begin{align} \label{compact 2}
    \wPi_n(\alpha_1,\alpha_2)
&=
\Psi_{p-1}\left(
\sqrt{p-1}\cdot
\frac{
t_n^{\circ\circ}-\wT_1\wT_2
}{
\sqrt{1-\wT_1^2}\sqrt{1-\wT_2^2}
}
\right) \nonumber \\
&= \frac{1+o(1)}
{(2-\alpha_1\alpha_2)\sqrt{2\pi L}}  
\cdot
\exp\left\{
-\frac L2(2-\alpha_1\alpha_2)^2
+
\frac{2-\alpha_1\alpha_2}{4}
\left(
\frac12\log L-x
\right)
\right\},
\end{align}
because
\begin{align*}
&\sqrt{p-1}\cdot
\frac{
t_n^{\circ\circ}-\wT_1\wT_2
}{
\sqrt{1-\wT_1^2}\sqrt{1-\wT_2^2}
}  =
\lb 2-\alpha_1\alpha_2 \rb \sqrt L
-
\frac{\frac12\log L-x}{4\sqrt L}
+
o\lb L^{-1/2} \rb
\end{align*}
uniformly on $\mc E_n$. Note that \eqref{compact 1} and \eqref{compact 2} hold uniformly over compact sets in $\mb R^2$ away from the curve $\la \lb \alpha_1, \alpha_2 \rb: \alpha_1 \alpha_2 =2 \ra$. 

Consequently,
\begin{align*}
   & \iint_{ \mc E_{n,M}}
\wPi_n(\alpha_1,\alpha_2)g_n(\alpha_1)g_n(\alpha_2)
\,d\alpha_1d\alpha_2 \\
= & \lb 1 + o(1) \rb \cdot \frac{\rho_n \sqrt{L}}{\pi \sqrt{2\pi }} \cdot \iint_{\mc E_{n,M}} L^{\frac{2-\alpha_1\alpha_2}{8}} \cdot \frac{\exp \la -LH_n \lb \alpha_1,\alpha_2 \rb - \frac{2-\alpha_1 \alpha_2}{4}x \ra}{2 - \alpha_1 \alpha_2 } \, d\alpha_1d\alpha_2
\end{align*}
where
\[
H_n(\alpha_1,\alpha_2)
:=
\rho_n(\alpha_1^2+\alpha_2^2)
+
\frac{\alpha_1^4+\alpha_2^4}{4}
+
\frac12(2-\alpha_1\alpha_2)^2 .
\]
Now make the change of variables
\[
y:= \frac{L^{1/4}}{\sqrt{2}} \cdot \lb \alpha_1 + \alpha_2 \rb; \qquad z:= \sqrt{\frac L2} \cdot \lb \alpha_1 - \alpha_2 \rb.
\]
The Jacobian is $dydz=L^{3/4} \, d\alpha_1 d\alpha_2$, so
\begin{equation*}
\begin{aligned}
&\iint_{\mc E_{n,M}}
\wPi_n(\alpha_1,\alpha_2)g_n(\alpha_1)g_n(\alpha_2)
\,d\alpha_1d\alpha_2 \\
= & 
\frac{e^{-x/2}}{n^2} \cdot 
\frac{1+o(1)}{2\pi\sqrt{2\pi}} \cdot 
\int_{-M/\sqrt{2}}^{M/\sqrt{2}}\int_{-M/\sqrt{2}}^{M/\sqrt{2}}
\exp\left\{
-\Delta y^2-2z^2-\frac{y^4}{4}
\right\}
\,dz\,dy,
\end{aligned}
\end{equation*}
because
\begin{align*}
LH_n(\alpha_1,\alpha_2)
&= 2L+
\sqrt L(\rho_n-1) y^2
+
(\rho_n+1)z^2
+
\frac14\left(y^2+\frac{z^2}{\sqrt L}\right)^2, \\
L^{\lb2 - \alpha_1 \alpha_2 \rb/8} &=L^{1/4} \lb 1 + o(1) \rb,\\
\frac{2-\alpha_1 \alpha_2}{4}x &= \frac{x}{2} \cdot \lb 1 + o(1) \rb, \qquad \sqrt{L} \lb \rho_n -1 \rb \to \Delta.
\end{align*}
This completes the proof of \eqref{mc E contribution}.

{\it  \underline{Proof of \eqref{mc E complement contribution}.}} Fix sufficiently small parameters $\delta_0, \ve_0$ to be specified later. We bound the integrand on $\mc E_{n,M}^c$ as
\begin{align*}
    &   \iint_{\mc E_{n,M}^c} \wPi_n(\alpha_1,\alpha_2)g_n(\alpha_1)g_n(\alpha_2) \,d\alpha_1d\alpha_2   \\
    \leq  &    \iint_{\mc E_{n,M, 1}^c} \wPi_n(\alpha_1,\alpha_2)g_n(\alpha_1)g_n(\alpha_2) \,d\alpha_1d\alpha_2 + \mbox{Int}_1 + \mbox{Int}_2 + \mbox{Int}_3 
\end{align*}
where
\begin{align*}
      \mc E^c_{n,M,1}:&= \mc E_{n,M}^c \cap \la \left|\alpha_1 \right| + |\alpha_2| \leq \ve_0 \ra \cap \la 2 - \alpha_1 \alpha_2 \geq \delta_0 \ra,   \\
     \mbox{Int}_1 &\leq \iintop_{ \mc E_{n,M}^c \cap \la 2- \alpha_1 \alpha_2 \leq \delta_0 \ra} \wPi_n(\alpha_1,\alpha_2)g_n(\alpha_1)g_n(\alpha_2) \,d\alpha_1d\alpha_2, \\
     \mbox{Int}_2 &\leq \iintop_{  \mc E_{n,M}^c \cap \la \ve_0^{-1} \geq  |\alpha_1| + |\alpha_2| \geq \ve_0 \ra \cap \la 2- \alpha_1 \alpha_2 \geq \delta_0 \ra} \wPi_n(\alpha_1,\alpha_2)g_n(\alpha_1)g_n(\alpha_2) \,d\alpha_1d\alpha_2, \\
     \mbox{Int}_3 &\leq \iintop_{ \mc E_{n,M}^c \cap \la |\alpha_1| + |\alpha_2| \geq \ve_0^{-1} \ra } \wPi_n(\alpha_1,\alpha_2)g_n(\alpha_1)g_n(\alpha_2) \,d\alpha_1d\alpha_2.
\end{align*}
We split the proof into two steps as follows.

{\it  \underline{Step 1: The contribution from $\mc E^c_{n,M} \setminus \mc E^c_{n,M,1}$ is of order $o \lb n^{-2} \rb$}.}  It suffices to show that 
\[
     \mbox{Int}_i = o \lb n^{-2} \rb, \qquad i=1,2,3.
\]
The above actually holds for all $M>0$. Consider the term $\mbox{Int}_1$. The trivial bound $\wPi_n \leq 1 $ and Lemma \ref{lem:watson-coordinate-md} yield
\begin{align*}
         \mbox{Int}_1 \lesssim \mbox{poly} \lb L \rb \cdot   \iintop_{ \la 2- \alpha_1 \alpha_2 \leq \delta_0 \ra} \exp \la - \frac{9L}{10} \left[ \rho_n \lb \alpha_1^2 + \alpha_2^2 \rb + \frac{\alpha_1^4 + \alpha_2^4}{4} \right]  \ra \, d\alpha_1 d\alpha_2.
\end{align*}
Note that, when $\delta_0$ is sufficiently small, the Cauchy--Schwarz inequality gives
\[
\inf_{\lb \alpha_1, \alpha_2 \rb: 2-\alpha_1\alpha_2 \leq \delta_0} \la \rho_n \lb \alpha_1^2 + \alpha_2^2 \rb + \frac{\alpha_1^4 + \alpha_2^4}{4}  \ra \geq \inf_{\lb \alpha_1, \alpha_2 \rb: 2-\alpha_1\alpha_2 \leq \delta_0} \la 2\rho_n \cdot \alpha_1 \alpha_2  + \frac{\alpha_1^2\alpha_2^2}{2}  \ra > 3.
\]
for all sufficiently large $n$. 
We then obtain $\mbox{Int}_1 = o \lb n^{-2} \rb$ using Lemma \ref{lem:strong-convex}.

Consider the term $\mbox{Int}_3$. Again,  the trivial bound $\wPi_n \leq 1 $ and Lemma \ref{lem:watson-coordinate-md} yield
\begin{align*}
         \mbox{Int}_3 \lesssim \mbox{poly} \lb L \rb \cdot   \iintop_{\mc E_{n,M}^c \cap \la |\alpha_1| + |\alpha_2| \geq \ve_0^{-1} \ra } \exp \la - \frac{L}{2} \left[ \rho_n \lb \alpha_1^2 + \alpha_2^2 \rb + \frac{\alpha_1^4 + \alpha_2^4}{4} \right]  \ra \, d\alpha_1 d\alpha_2.
\end{align*}
Observe that on the set $\la |\alpha_1| + |\alpha_2| \geq \ve_0^{-1} \ra$, one can make 
\[
 \rho_n \lb \alpha_1^2 + \alpha_2^2 \rb + \frac{\alpha_1^4 + \alpha_2^4}{4} >20
\]
by choosing $\ve_0$ sufficiently small. We then obtain $\mbox{Int}_3 = o \lb n^{-2} \rb$ using Lemma \ref{lem:strong-convex}.

Now consider the term $\mbox{Int}_2$. Since the domain of integration is compact in this case, it follows from \eqref{compact 1}, \eqref{compact 2}, and Lemma \ref{lem:watson-coordinate-md} that 
 \begin{align*}
     \mbox{Int}_2 &\lesssim \frac{\rho_n \sqrt{L}}{\pi \sqrt{2\pi }} \times \iintop_{\mc E_{n,M}^c \cap \la \ve_0^{-1} \geq  |\alpha_1| + |\alpha_2| \geq \ve_0 \ra \cap \la 2- \alpha_1 \alpha_2 \geq \delta_0 \ra} L^{\frac{2-\alpha_1\alpha_2}{8}} \cdot \frac{\exp \la -LH_n \lb \alpha_1,\alpha_2 \rb - \frac{2-\alpha_1 \alpha_2}{4}x \ra}{2 - \alpha_1 \alpha_2 } \, d\alpha_1d\alpha_2  \\
     &\lesssim_{x,\ve_0} \, \mbox{poly}(L) \times  \iintop_{\mc E_{n,M}^c \cap \la \ve_0^{-1} \geq  |\alpha_1| + |\alpha_2| \geq \ve_0 \ra \cap \la 2- \alpha_1 \alpha_2 \geq \delta_0 \ra}  \exp \la -L H_n \lb \alpha_1, \alpha_2 \rb \ra \, d\alpha_1d\alpha_2.
 \end{align*}
It is easy to check that, for all sufficiently large $n$,
\begin{align*}
&\inf_{ \lb \alpha_1, \alpha_2 \rb \in \la \ve_0^{-1} \geq  |\alpha_1| + |\alpha_2| \geq \ve_0 \ra \cap \la 2- \alpha_1 \alpha_2 \geq \delta_0 \ra} H_n \lb \alpha_1, \alpha_2 \rb \\
=&\inf_{ \lb \alpha_1, \alpha_2 \rb \in \la \ve_0^{-1} \geq  |\alpha_1| + |\alpha_2| \geq \ve_0 \ra \cap \la 2- \alpha_1 \alpha_2 \geq \delta_0 \ra} \left[ \rho_n(\alpha_1^2+\alpha_2^2)
+
\frac{\alpha_1^4+\alpha_2^4}{4}
+
\frac12(2-\alpha_1\alpha_2)^2  \right]  \geq 2+ \frac{\ve_0^4}{10^3}.
\end{align*}
We then get $\mbox{Int}_2 = o \lb n^{-2} \rb$ since the domain of integration is compact.

{\it  \underline{Step 2: The contribution from $ \mc E^c_{n,M,1}$ is of order $o \lb n^{-2} \rb$}.}
We now bound the contribution from \(\mc E^c_{n,M,1}\).  Recall that
\[
\mc E^c_{n,M,1}
=
\mc E_{n,M}^c
\cap
\left\{
|\alpha_1|+|\alpha_2|\le \ve_0
\right\}
\cap
\left\{
2-\alpha_1\alpha_2\ge \delta_0
\right\}.
\]
Since our domain of interest has compact closure and is away from the curve $\la \lb \alpha_1, \alpha_2 \rb: \alpha_1 \alpha_2=2 \ra$, the bound in Lemma \ref{lem:watson-coordinate-md} applies and yields
\begin{align*}
    \wPi_n(\alpha_1,\alpha_2) g_n(\alpha_1)g_n(\alpha_2) 
& \lesssim_{x,\delta_0,\ve_0} \,
L^{3/4 - \alpha_1 \alpha_2/8} \cdot \exp\{-L H_n(\alpha_1,\alpha_2)\} \\
&= L^{3/4} \cdot \exp \la - \frac{\alpha_1 \alpha_2 \cdot \log L}{8} -L H_n \lb \alpha_1, \alpha_2 \rb  \ra
\end{align*}
uniformly on $\mc E^c_{n,M,1}$.

Now perform the change of variables
\[
y:=\frac{L^{1/4}}{\sqrt2} \lb \alpha_1+\alpha_2 \rb,
\qquad
z:=\sqrt{\frac L2} \lb \alpha_1-\alpha_2\rb.
\]
With this change of variables,
\[
d\alpha_1d\alpha_2=L^{-3/4}\,dy\,dz, \qquad \mc E^c_{n,M} \subset  \la  |y|>\frac{M}{\sqrt2} \ra \cup \la |z|>\frac{M}{\sqrt2} \ra.
\]
Moreover,
$
    \alpha_1 \alpha_2 = \frac{y^2}{2\sqrt{L}} - \frac{z^2}{2L}, 
$
and
\begin{align*}
    L \cdot H_n \lb \alpha_1, \alpha_2 \rb &= 2L+ \sqrt L(\rho_n-1)y^2
+
(\rho_n+1)z^2
+
\frac14\left(y^2+\frac{z^2}{\sqrt L}\right)^2 .
\end{align*}
Consequently,
\begin{align*}
        & \iint_{\mc E_{n,M, 1}^c} \wPi_n(\alpha_1,\alpha_2)g_n(\alpha_1)g_n(\alpha_2) \,d\alpha_1d\alpha_2 \\
     \lesssim_{x,\delta_0,\ve_0}
       & \iintop_{ \la  |y|>\frac{M}{\sqrt2} \ra \cup \la |z|>\frac{M}{\sqrt2} \ra} 
       \exp \la  - \frac{\log L}{16\sqrt{L}} \cdot y^2 + \frac{\log L}{16L} \cdot z^2 -2L - \sqrt L(\rho_n-1)y^2  \ra \\
&\times \exp \la - (\rho_n+1)z^2
-
\frac14\left(y^2+\frac{z^2}{\sqrt L}\right)^2  \ra \, dy \, dz \\
\lesssim_{x,\delta_0,\ve_0} & \, n^{-2} \cdot \iintop_{ \la  |y|>\frac{M}{\sqrt2} \ra \cup \la |z|>\frac{M}{\sqrt2} \ra} 
\exp \la  Cy^2 - cz^2 - \frac{y^4}{8} \ra \, dy \, dz.
\end{align*}
for some universal constants $C,c>0$. 

The last double integral goes to $0$ as $M \to \infty$. This yields \eqref{mc E complement contribution} and completes the proof. $\hfill$ $\square$

\subsection{Poisson approximation}

We now prove Theorem \ref{thm:Watson-critical}.  Without loss of
generality, assume \(\bm\mu_n=\bm e_1\).  Fix \(x\in\mb R\), and let
\(s_n^{\circ\circ}\) and \(t_n^{\circ\circ}\) be as in Proposition
\ref{prop:large-deviation-critical}.  
Define
\[
L:= \log n, \qquad 
    I_{ij}^{\circ\circ}
    :=
    \mathbf 1_{\{|S_{ij}|>t_n^{\circ\circ}\}},
    \qquad
    W^{\circ\circ}
    :=
    \sum_{1\le i<j\le n}I_{ij}^{\circ\circ}.
\]
Then
\[
\left\{
pM_n^2\le 4\log n-\frac12\log\log n+x
\right\}
=
\{M_n\le t_n^{\circ\circ}\}
=
\{W^{\circ\circ}=0\}.
\]

Let
$
    \mc I_n:=\{(i,j):1\le i<j\le n\}.
$
By Lemma
\ref{lem:AGG},
\[
\left|
\mb P_{\kappa_n,\bm e_1}(W^{\circ\circ}=0)
-
\exp\{-\lambda_n^{\circ\circ}\}
\right|
\le
b_{1n}^{\circ\circ}+b_{2n}^{\circ\circ},
\]
where
\[
    \lambda_n^{\circ\circ}
    :=
    \mb E_{\kappa_n,\bm e_1}W^{\circ\circ}
    =
    \binom n2
    \mb P_{\kappa_n,\bm e_1}(I_{12}^{\circ\circ}=1),
\]
and
\[
    b_{1n}^{\circ\circ}
    \lesssim
    n^3\mb P_{\kappa_n,\bm e_1}(I_{12}^{\circ\circ}=1)^2,
    \qquad
    b_{2n}^{\circ\circ}
    \lesssim
    n^3
    \mb P_{\kappa_n,\bm e_1}
    \left(I_{12}^{\circ\circ}=1,I_{13}^{\circ\circ}=1\right).
\]
As in the proof of Theorem \ref{thm:watson-rho<1}, Proposition \ref{prop:large-deviation-critical} gives the mean convergence and $ b_{1n}^{\circ\circ} = O \lb n^{-1} \rb = o(1)$.

It remains to prove that \(b_{2n}^{\circ\circ}\to0\).  Put
\[
 I_{ij}^{\circ\circ}
    =
    I_{ij}^{+,\circ\circ}+I_{ij}^{-,\circ\circ}, \qquad 
    I_{ij}^{+,\circ\circ}:=\mathbf 1_{\{S_{ij}>t_n^{\circ\circ}\}},
    \qquad
    I_{ij}^{-,\circ\circ}:=\mathbf 1_{\{S_{ij}<-t_n^{\circ\circ}\}}.
\]
By the symmetry of the Watson distribution and independent sign changes of
\(\bm X_2\) and \(\bm X_3\),
\[
\begin{aligned}
&\mb P_{\kappa_n,\bm e_1}
\left(
I_{12}^{\circ\circ}=1,I_{13}^{\circ\circ}=1
\right) =
4\,
\mb P_{\kappa_n,\bm e_1}
\left(
I_{12}^{+,\circ\circ}=1,I_{13}^{+,\circ\circ}=1
\right).
\end{aligned}
\]
Thus it suffices to show that
\begin{equation}\label{eq:critical-right-shared-square}
\mb P_{\kappa_n,\bm e_1}
\left(
I_{12}^{+,\circ\circ}=1,I_{13}^{+,\circ\circ}=1
\right)
=
o(n^{-3}).
\end{equation}

Now, for fixed
$\bm x=\left(x_1,\sqrt{1-x_1^2} \cdot \bm v_x\right)\in\mb S^{p-1}$,
write
\[
    x_1= \widetilde{x}_1 \cdot \frac{L^{1/4}}{p^{1/4}}.
\]
Define
\[
\eta_n^+(\bm x)
:=
\mb P_{\kappa_n,\bm e_1}
\left(
\bm x^\top\bm X>t_n^{\circ\circ}
\right),
\qquad 
\bm X_1=\left(\wT_1,\sqrt{1-\wT^2_1} \cdot \bm V_1 \right)
\]
 as in \eqref{representation2}. Conditioning on \(\bm X_1\), we have
\[
\mb P_{\kappa_n,\bm e_1}
\left(
I_{12}^{+,\circ\circ}=1,I_{13}^{+,\circ\circ}=1
\right)
=
\mb E_{\kappa_n,\bm e_1}
\left[
\eta_n^+(\bm X_1)^2
\right].
\]
where
\[
\eta_n^+(\bm x)
=
\int_{\mb R}
\wPi_n( \, \widetilde{x}_1,\alpha)g_n(\alpha)\,d\alpha,
\]
where \(g_n\) is the density in Lemma \ref{lem:watson-coordinate-md} and $\wPi_n$ is as in \eqref{tilde Pi}.

  {\it \underline{Step 1: Pointwise bound on $\eta^+_n \lb \bm x \rb$.}} We first note that for every sufficiently small fixed
\(\ve>0\),
\begin{equation}\label{eq:critical-wPi-pointwise-bound}
\wPi_n(a,b)
\le
\operatorname{poly}(L) \cdot 
\exp\left\{
-\frac L2\left(2-|ab|-\ve\right)_+^2
\right\},
\end{equation}
uniformly in \(a,b\).  

Indeed, if \(2-|ab|-\ve\le0\), the bound is trivial.
If \(2-|ab|-\ve>0\), then a crude tail bound using the self-normalized moderate-deviation result (Theorem 2.3 in \cite{jing2003self}) applies. 
Therefore \eqref{eq:critical-wPi-pointwise-bound} holds. 
Put
\[
    \phi_n(\alpha):=\rho_n\alpha^2+\frac{\alpha^4}{4}.
\]
By Lemma \ref{lem:watson-coordinate-md}, for every sufficiently small fixed
\(\ve>0\),
\[
g_n(\alpha)
\le
\operatorname{poly}(L)
\exp\{-L(1-\ve)\phi_n(\alpha)\}.
\]
Combining this with \eqref{eq:critical-wPi-pointwise-bound}, we obtain,
uniformly in \(\bm x\),
\[
\eta_n^+(\bm x)
\le
\operatorname{poly}(L)
\int_{\mb R}
\exp\left\{
-L\left[
(1-\ve)\phi_n(\alpha)
+
\frac12 \lb 2-|\widetilde{x}_1\alpha|-\ve\rb_+^2
\right]
\right\}
\,d\alpha .
\]
We next use the fact that the term inside the exponent is strongly convex as a function of $\alpha$ to bound the integral above. To this end, for \(u\ge0\), put
\[
F_{n,u}(v):= (1-\ve)\phi_n(v)
+
\frac12(2-uv-\ve)_+^2 \, , \qquad
J_n(u)
:=
\inf_{v\ge0}
\left\{
F_{n,u}(v)
\right\}.
\]
As a function of $v$, $F_{n,u}(v)$ is $1/2$-strongly convex because it is the sum of a $1/2$-strongly convex function (for large $n$) and a convex function. 
Thus, for $ \widetilde{x}_1 \geq 0$, applying Lemma \ref{lem:strong-convex} with $\Omega=\mb R^+$, $G=F_{n,u}(v)$, and $\ve=1/L$ yields
\begin{align*}
    \eta_n^+(\bm x)  \leq \mbox{poly}(L) \cdot \int_{0}^\infty \exp \la -L\cdot F_{n, \widetilde{x}_1 } \lb \alpha \rb \ra \, d\alpha
    &\lesssim  \mbox{poly}(L) \cdot \exp \la -L \cdot J_n \lb \, \widetilde{x}_1 \rb \ra.
\end{align*}
Applying the same argument when $\widetilde{x}_1 < 0$ yields the unified bound
\begin{equation}\label{eq:critical-eta-pointwise}
\eta_n^+(\bm x)
\le
\operatorname{poly}(L)
\exp\left\{-LJ_n \lb |  \widetilde{x}_1 | \rb\right\}.
\end{equation}

  {\it \underline{Step 2: Bounding $\mb E_{\kappa_n,\bm e_1}
\left[ \eta_n^+(\bm X_1)^2 \right]$}.}
We integrate \eqref{eq:critical-eta-pointwise} to obtain
\[
\begin{aligned}
\mb E_{\kappa_n,\bm e_1}
\left[
\eta_n^+(\bm X_1)^2
\right] 
&\le  \mbox{poly}(L) \cdot \int_{\mb R}  \exp\left\{-2L \cdot J_n \lb | \alpha | \rb \right\} \cdot g_n(\alpha) \, d\alpha     \\
&\lesssim \operatorname{poly}(L) \cdot
\int_{0}^\infty
\exp\left\{
-L\left[
(1-\ve)\phi_n(\alpha)+2J_n(|\alpha|)
\right]
\right\}
\,d\alpha .
\end{aligned}
\]
where $g_n$ is the density in Lemma \ref{lem:watson-coordinate-md}, and $\ve$ is sufficiently small and will be specified later.

Observe that, for $u,v\geq0$, $\ve>0$ sufficiently small, and
$n$ sufficiently large, we have $\rho_n\geq1-\ve$, and hence
\begin{align*}
&\frac12(1-2\ve)\phi_n(u)
+
(1-\ve)\phi_n(v)
+
\frac12\left(2-uv-\ve\right)_+^2\\
&\qquad\geq
(1-3\ve)
\left[
\frac{u^2}{2}+v^2+\frac{u^4}{8}+\frac{v^4}{4}
\right]
+
\frac12\left(2-uv-\ve\right)_+^2\\
&\qquad\geq
(1-3\ve)
\left[
\sqrt{2}\,uv+\frac{u^2v^2}{2\sqrt{2}}
\right]
+
\frac12\left(2-uv-\frac{3\ve}{2}\right)_+^2\\
&\qquad\geq
\frac{3+c}{2},
\end{align*}
where the second inequality follows from the Cauchy--Schwarz inequality, and the last inequality follows from
Lemma~\ref{lem:technical-minimum}, with $3\ve/2$ in place of $\ve$.
Taking the infimum over $v\geq0$ gives
\[
\frac12(1-2\ve)\phi_n(u)+J_n(u)
\geq
\frac{3+c}{2}.
\]
Consequently,
\begin{align*}
(1-\ve)\phi_n(u)+2J_n(u)
&=
2\left[
\frac12(1-2\ve)\phi_n(u)+J_n(u)
\right]
+\ve\phi_n(u)\\
&\geq
3+c+\ve\phi_n(u).
\end{align*}
It follows that
\begin{align*}
\mb E_{\kappa_n,\bm e_1}
\left[
\eta_n^+(\bm X_1)^2
\right]
&\leq
\operatorname{poly}(L)e^{-(3+c)L}
\int_0^\infty
\exp\left\{-\ve L\phi_n(u)\right\}\,du\\
&\leq
\operatorname{poly}(L)e^{-(3+c)L}
\int_0^\infty
\exp\left\{-\frac{\ve L}{4}u^4\right\}\,du\\
&=
\operatorname{poly}(L)L^{-1/4}e^{-(3+c)L}
=
o(n^{-3}).
\end{align*}
Thus
\[
\mb P_{\kappa_n,\bm e_1}
\left(
I_{12}^{+,\circ\circ}=1,
I_{13}^{+,\circ\circ}=1
\right)
=
o(n^{-3}),
\]
which proves \eqref{eq:critical-right-shared-square}. This completes the proof. $\hfill$ $\square$

\begin{appendix}
 \section{Technical results} 

We start with a useful tail bound. 
 \begin{lemma}\label{lem:spherical-tail}
Suppose \(d\sim p\) and \(p/(\log n)^2\to\infty\). The following statements hold:

\begin{itemize}
    \item With \(u_n=u_n(x)\) as in
\eqref{u-t} (for fixed $x$),
\begin{equation}\label{eq:Psi-main}
 \Psi_d(u_n)
 =\frac{1+o(1)}{u_n \sqrt{2\pi}}e^{-u_n^2/2}
 =\frac{1+o(1)}{2\sqrt{2\pi}}n^{-2}e^{-x/2}.
\end{equation}

\item For each fixed \(M<\infty\), uniformly over \(|a|\le M/u_n\),
\begin{equation}\label{eq:Psi-local-ratio}
 \frac{\Psi_d(u_n-a)}{\Psi_d(u_n)}
 =\exp\left\{u_na-\frac{a^2}{2}\right\}(1+o(1)).
\end{equation}

\item Suppose \(d=p-1\). For all \(A\in[0,1]\) and all \(B\in[-1,1]\), we have
\begin{align}
 \frac{\mb{P}(AZ_d+B>t_n)}{\Psi_d(u_n)} &\le C\exp(u_n\sqrt p\,B), \label{eq:right-domination} \\
 \frac{\mb P(AZ_d+B<-t_n)}{\Psi_d(u_n)}&\le C\exp(-u_n\sqrt p\,B),
 \label{eq:left-domination}
\end{align}
for some constant $C=C_{x}>0$ that depends only on $x$, where $t_n$ is defined in \eqref{u-t}.
\end{itemize} 

\end{lemma}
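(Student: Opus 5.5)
The plan is to work directly with the explicit density $f_d(z)=c_d(1-z^2)^{(d-3)/2}$ of $Z_d$, rescaled as $z=y/\sqrt d$. Under this change of variables the density of $\sqrt d Z_d$ is $c_d d^{-1/2}(1-y^2/d)^{(d-3)/2}$. By Stirling's formula, $c_d/\sqrt d\to(2\pi)^{-1/2}$. Moreover, uniformly for $|y|\le K\sqrt{\log n}$,
\[
\tfrac{d-3}{2}\log(1-y^2/d)=-\tfrac{y^2}{2}+O\Big(\tfrac{y^2}{d}+\tfrac{y^4}{d}\Big)=-\tfrac{y^2}{2}+o(1),
\]
where we use $(\log n)^2/p\to0$ and $d\sim p$. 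Hence the density equals $\phi(y)(1+o(1))$ on this window, where $\phi$ is the standard Gaussian density.

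For \eqref{eq:Psi-main}, we split $\Psi_d(u_n)=\int_{u_n}^{\sqrt d}$ into two pieces. The first is $[u_n,2u_n]$, where the Gaussian approximation above gives the Mills-ratio value $\frac{1+o(1)}{u_n\sqrt{2\pi}}e^{-u_n^2/2}$. The second is $[2u_n,\sqrt d]$; there we use $\log(1-s)\le -s$, which gives density $\lesssim e^{-(d-3)y^2/(2d)}$, and this piece is $O(e^{-3u_n^2/2})$, hence negligible. Substituting $u_n^2=4\log n-\log\log n+x$ and $u_n\sim2\sqrt{\log n}$ yields the second equality.

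For \eqref{eq:Psi-local-ratio}, we apply the same computation to $\Psi_d(u_n-a)$. Since $(u_n-a)^2/2=u_n^2/2-u_na+a^2/2$ and $u_n-a=u_n(1+O(1/u_n^2))$, the prefactor ratio tends to one. The Gaussian-approximation error is uniform on the window, so this is uniform over $|a|\le M/u_n$.

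For the domination bounds \eqref{eq:right-domination}--\eqref{eq:left-domination}, which are the main obstacle because they must hold uniformly over all $A\in[0,1]$ and $B\in[-1,1]$, we first reduce to the right tail. If $A=0$, the event is $\{B>t_n\}$, and on this event the right side is $e^{u_n\sqrt pB}\ge e^{u_n^2}$, which dominates $1/\Psi_d(u_n)\lesssim u_ne^{u_n^2/2}$. If $B\ge t_n$, the same inequality gives the bound trivially since the probability is at most one. So we assume $A>0$ and $B<t_n$. Then
\[
\mb P(AZ_d+B>t_n)\le \mb P\big(Z_d>t_n-B\big)=\Psi_d\big(\sqrt d(t_n-B)\big),
\]
using $A\le1$. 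Set $w=\sqrt d(t_n-B)>0$. We need a global tail bound $\Psi_d(w)\le C e^{-w^2/2}\min(1,1/w)$ for all $w\ge0$ (Gaussian-type, with the factor $(d-3)/d$ absorbed). This follows from $\log(1-s)\le -s$ together with the Mills-type estimate $\int_w^\infty e^{-cy^2/2}\,dy\le \min(\sqrt{\pi/(2c)},e^{-cw^2/2}/(cw))$, with $c=(d-3)/d$. Care is needed because $e^{-cw^2/2}$ versus $e^{-w^2/2}$ costs a factor $e^{3w^2/(2d)}$, which is bounded only when $w^2=O(d)$; here this holds since $w\le\sqrt d(t_n+1)\lesssim\sqrt d$, so the cost is $e^{O(1)}$.

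Then we compare exponents. Since $w\approx u_n-\sqrt pB$ up to a factor $\sqrt{d/p}=1+O(1/p)$, completing the square gives
\[
-\tfrac{w^2}{2}\le -\tfrac{u_n^2}{2}+u_n\sqrt pB-\tfrac{pB^2}{2}+O(1).
\]
Here the $O(1)$ comes from $(u_n-\sqrt pB)^2/p\cdot O(1)$ being bounded, because $|u_n-\sqrt p B|\lesssim\sqrt p$. For the prefactor, when $w\ge u_n/2$ we have $1/w\lesssim1/u_n$, which matches \eqref{eq:Psi-main}. When $w<u_n/2$, that is $\sqrt pB>u_n/2$, the extra factor $u_n$ is absorbed by $e^{-pB^2/2}\le e^{-u_n^2/8}$. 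Dividing by \eqref{eq:Psi-main} gives \eqref{eq:right-domination}. For \eqref{eq:left-domination}, we use the symmetry of $Z_d$ and apply the right-tail bound with $B$ replaced by $-B$.
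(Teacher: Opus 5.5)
Your proposal is correct and follows essentially the same route as the paper. For the domination bounds the paper likewise disposes of $B\ge t_n$ trivially, bounds the probability by $\Psi_d(\sqrt d(t_n-B))$ using $A\le1$, and splits at $\sqrt d(t_n-B)=u_n/2$ with a Mills-ratio bound, absorbing the extra factor $u_n$ in the second case into $e^{-u_n^2/8}$. The only real difference is that you derive \eqref{eq:Psi-main}, \eqref{eq:Psi-local-ratio} and the global Mills-type bound for $\Psi_d$ directly from the density, whereas the paper simply cites standard beta tail asymptotics; this is a welcome filling-in of detail. One small slip: your $\min\bigl(\sqrt{\pi/(2c)},\,e^{-cw^2/2}/(cw)\bigr)$ lacks the factor $e^{-cw^2/2}$ in its first entry, which is harmless because $e^{-w^2/2}$ is bounded below for $w\le1$.
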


\noindent \textbf{Proof of Lemma \ref{lem:spherical-tail}.} 
Note that \eqref{eq:Psi-main} follows directly from the standard tail asymptotics of the beta distribution. Given \eqref{eq:Psi-main}, \eqref{eq:Psi-local-ratio} follows by applying \eqref{eq:Psi-main} to $\Psi_d(u_n-a)$ and $\Psi(u_n)$. It remains to prove \eqref{eq:right-domination} and \eqref{eq:left-domination}.  

By the symmetry of \(Z_d\), it
suffices to prove \eqref{eq:right-domination} because replacing $B$ by $-B$ in \eqref{eq:right-domination} gives \eqref{eq:left-domination}. We now prove \eqref{eq:right-domination}. 
We first note that if \(t_n-B\le0\), then \eqref{eq:Psi-main} gives
\[
\frac{\mb{P}(AZ_d+B>t_n)}{\Psi_d(u_n)} \lesssim u_n \cdot e^{u_n^2/2} \leq e^{u_n^2} = \exp \lb u_n \sqrt{p} t_n \rb \leq \exp \lb u_n \sqrt{p} B \rb.
\]
Thus \eqref{eq:right-domination} holds trivially. Therefore, we only need to consider the case $t_n-B > 0$. Since \(A\le1\), we have the bound
\[
 \mb P(AZ_d+B>t_n)\le \mb P\lb Z_d>t_n-B \rb=\Psi_d(y_B),
 \qquad
 y_B:=\sqrt d\,\lb t_n-B \rb >0.
\]
We consider two cases: $y_B \in [u_n/2,\infty)$ and $y_B \in (0,u_n/2)$. 

{\it \underline{Case 1: \(y_B\ge u_n/2\)}.} We have the Mills ratio bound \(\Psi_d(y_B)\lesssim y_B^{-1}e^{-y_B^2/2}\) in this regime.  Dividing by
\eqref{eq:Psi-main} and using \(y_B\ge u_n/2\),
\[
\frac{\mb{P}(AZ_d+B>t_n)}{\Psi_d(u_n)}  \leq  \frac{\Psi_d(y_B)}{\Psi_d(u_n)}
 \lesssim \exp\left\{\frac{u_n^2-y_B^2}{2}\right\}.
\]
Since \(d=p-1\) and \(t=u_n/\sqrt p\),
\[
 \frac{u_n^2-y_B^2}{2}
 =\frac{u_n^2}{2}\left(1-\frac{p-1}{p}\right)
   +\frac{p-1}{p} \, u_n\sqrt p\,B-\frac{p-1}{2}B^2
 \le u_n\sqrt p\,B+o(1),
\]
uniformly in \(B\in[-1,1]\).  Hence \eqref{eq:right-domination} holds in this case.

{\it \underline{Case 2: \(0<y_B<u_n/2\)}}.  Note that
\(B=t_n-y_B/\sqrt d\), and the same Mills ratio bound gives
\[
 \frac{\Psi_d(y_B)}{\Psi_d(u_n)}
 \lesssim \frac{u_n}{1+y_B}
 \exp\left\{\frac{u_n^2-y_B^2}{2}\right\} \leq u_n \cdot \exp\left\{\frac{u_n^2-y_B^2}{2}\right\}.
\]
On the other hand,
\[
 u_n\sqrt p\,B
 =u_n^2-u_n y_B\sqrt{p/d}
 =u_n^2-u_n y_B+o(1).
\]
Thus, uniformly for \(0<y_B<u_n/2\),
\[
 u_n\sqrt p\,B- \frac{u_n^2-y_B^2}{2}
 =\frac{(u_n-y_B)^2}{2}+o(1)
 \ge \frac{u_n^2}{8}+o(1).
\]
Consequently,
\[
u_n \cdot \exp \la \frac{u_n^2-y_B^2}{2} \ra \leq (1+o(1))\cdot u_n \cdot \exp \la  u_n\sqrt p\,B - \frac{u_n^2}{8} \ra
\lesssim \exp \lb u_n\sqrt{p} B \rb 
\]
  Hence \eqref{eq:right-domination} also holds in this case. This completes the proof. $\hfill$ $\square$

\begin{lemma}\label{lem:spherical-mgf}
Let \(R_m\) denote the first coordinate of a uniform point on
\(\mathbb S^{m-1}\), and put \(M_m(a):=\mb E e^{aR_m}\).  Uniformly for
\(|a|=o(m)\),
\begin{equation}\label{eq:spherical-mgf-expansion}
 \log M_m(a)=\frac{a^2}{2m}
 +O\left(\frac{a^4}{m^3}\right).
\end{equation}
\end{lemma}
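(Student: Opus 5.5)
We need to prove Lemma \ref{lem:spherical-mgf}: $\log M_m(a)=a^2/(2m)+O(a^4/m^3)$ uniformly for $|a|=o(m)$.

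The plan is to work with the explicit Bessel-type series for $M_m$. Since $R_m$ is symmetric with $\mb E R_m^{2k}=\frac{(2k-1)!!}{m(m+2)\cdots(m+2k-2)}$ (a standard moment formula, obtained from the density $f_m$ in \eqref{eq:spherical-density} via beta integrals, or by writing $R_m=G_1/\|\bm G\|$ for a Gaussian vector and using independence of $\|\bm G\|$ and $\bm G/\|\bm G\|$), we have
\[
M_m(a)=\sum_{k\ge0}\frac{a^{2k}}{(2k)!}\cdot\frac{(2k-1)!!}{\prod_{j=0}^{k-1}(m+2j)}=\sum_{k\ge0}\frac{(a^2/2)^k}{k!\,\prod_{j=0}^{k-1}(m+2j)} ,
\]
i.e.\ the confluent hypergeometric function ${}_0F_1(;m/2;a^2/4)$. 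Setting $y:=a^2/(2m)$, the $k$-th term equals $\frac{y^k}{k!}\prod_{j<k}(1+2j/m)^{-1}$. I would then pass to the cumulant generating function $\psi(a):=\log M_m(a)$ and use the Riccati ODE satisfied by ${}_0F_1$: with $w=M_m$, one has $w''+\frac{m-1}{a}w'-w=0$, so $\phi:=\psi'$ satisfies $\phi'+\phi^2+\frac{m-1}{a}\phi=1$, $\phi(0)=0$.

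The key step is to compare $\phi$ with $a/m$. First obtain a priori bounds $0\le\phi(a)\le a/m$ for $a\ge0$ (by symmetry reduce to $a\ge0$): $\phi\ge0$ is clear since $\psi$ is even and convex, and the upper bound follows because $\phi(a)=\mb E_a R_m$ under the tilted law, and a comparison argument in the Riccati equation shows $\phi$ cannot cross $a/m$ (at a crossing, $\phi'\le 1-\phi^2-(m-1)/m\cdot 1<1/m$). Then write $\phi=a/m-\varepsilon$ and plug in: the equation gives $\varepsilon'+\frac{m-1}{a}\varepsilon=\phi^2-\frac{a}{m^2}\cdot\text{(lower order)}$, more precisely $\varepsilon'+\frac{m-1}{a}\varepsilon\le \frac{a^2}{m^2}+\frac{1}{m}\cdot O(\cdot)$, and solving this linear inequality with integrating factor $a^{m-1}$ yields $0\le\varepsilon(a)\lesssim \frac{a^3}{m^3}+\frac{a}{m^2}$. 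Integrating $\phi$ from $0$ to $a$ gives $\psi(a)=\frac{a^2}{2m}+O\!\left(\frac{a^4}{m^3}+\frac{a^2}{m^2}\right)$.

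The main obstacle is the extra $a^2/m^2$ term, which is not in general $O(a^4/m^3)$ when $a\ll\sqrt m$. I would handle it by using the exact constant: the $k=1$ coefficient is $a^2/(2m)$ exactly, and the second cumulant-level correction from the series is $\frac{y^2}{2}\bigl(\frac{m}{m+2}-1\bigr)=-\frac{a^4}{4m^2(m+2)}$, so for $|a|\le\sqrt m$ a direct Taylor expansion of $\log$ of the series (terms dominated geometrically since $y\le 1/2$) gives $\psi(a)=\frac{a^2}{2m}+O(a^4/m^3)$, while for $\sqrt m\le|a|=o(m)$ the term $a^2/m^2\le a^4/m^3$ and the ODE bound suffices. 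Combining the two ranges gives \eqref{eq:spherical-mgf-expansion} uniformly.
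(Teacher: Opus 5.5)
Your proposal is correct but takes a different route from the paper. The paper never uses the differential equation. It rewrites the series as $M_m(a)=e^{\theta}\,\mathbb{E}\,w_{N,m}$, where $\theta=a^2/(2m)$, $N\sim\mathrm{Poisson}(\theta)$, and $w_{k,m}=\prod_{\ell<k}(1+2\ell/m)^{-1}=e^{-S_{k,m}}$. It then combines $0\le S_{N,m}\le N(N-1)/m$ with Jensen's inequality to get $-\theta^2/m\le\log M_m(a)-\theta\le 0$ for every real $a$, in a few lines. You instead work with the Riccati equation for $\phi=\psi'$.

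The ``main obstacle'' you identify comes only from an imprecise substitution. Plugging $\phi=a/m-\varepsilon$ into $\phi'+\phi^2+\frac{m-1}{a}\phi=1$ gives exactly
\[
\varepsilon'+\frac{m-1}{a}\,\varepsilon=\Bigl(\frac{a}{m}-\varepsilon\Bigr)^2=\phi^2,\qquad\text{equivalently}\qquad \bigl(a^{m-1}\varepsilon\bigr)'=a^{m-1}\phi^2 ,
\]
with no extra term. This single identity does all the work. First, since the right side is nonnegative and $a^{m-1}\varepsilon(a)\to0$ as $a\downarrow0$, it gives $\varepsilon\ge0$ directly, so the crossing argument is unnecessary. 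Second, with $0\le\phi\le a/m$ (where $\phi\ge0$ follows from evenness and convexity of $\psi$), it gives $\varepsilon(a)\le a^3/\{m^2(m+2)\}$. Integrating yields
\[
0\le \frac{a^2}{2m}-\log M_m(a)\le \frac{a^4}{4m^2(m+2)}\qquad\text{for all } a,
\]
with no split into $|a|\le\sqrt m$ versus $|a|\ge\sqrt m$ and no need for $|a|=o(m)$. Your two-range patch is still valid: for $y\le 1/2$, the bound $1-w_{k,m}\le k(k-1)/m$ gives $|e^{-y}M_m(a)-1|\le y^2/m$, which is essentially the paper's ingredient. It is just superfluous.

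As for what each route buys: the paper's argument is shorter and uses no special-function ODE. Yours controls $\psi'(a)$, the mean of the exponentially tilted law of $R_m$. Taking the dimension equal to $p$ and $a=\kappa$, this is exactly $\mathbb{E}T$ for the FvML cosine component. You therefore also get $\frac{\kappa}{p}-\frac{\kappa^3}{p^2(p+2)}\le \mathbb{E}T\le\frac{\kappa}{p}$, which is the first claim of Proposition \ref{Herbst} that the paper imports from the literature.
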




\noindent \textbf{Proof of Lemma \ref{lem:spherical-mgf}.}
We use the exact moments of a uniform spherical coordinate. By equation (9.6.3) in \cite{M-Jupp}, we have
\[
\mathbb E U_1^{2k}
=
\frac{(1/2)_k}{(m/2)_k}
=
\frac{(2k-1)!!}{m(m+2)\cdots(m+2k-2)}
\]
with the convention that the empty product equals \(1\). 

 The monotone convergence theorem then yields
\[
\begin{aligned}
M_m(a)
&=
\sum_{k=0}^{\infty}
\frac{a^{2k}}{(2k)!} \cdot \mathbb E U_1^{2k} =
\sum_{k=0}^{\infty}
\frac{a^{2k}}{(2k)!} \cdot
\frac{(2k-1)!!}{m(m+2)\cdots(m+2k-2)}.
\end{aligned}
\]
Since
\[
(2k-1)!!=\frac{(2k)!}{2^k k!},
\]
we get
\[
M_m(a)
=
\sum_{k=0}^{\infty}
\frac{1}{k!}
\left(\frac{a^2}{2m}\right)^k \cdot 
\prod_{\ell=0}^{k-1}\left(1+\frac{2\ell}{m}\right)^{-1}.
\]
Put
\[
\theta:=\frac{a^2}{2m},
\qquad
w_{k,m}:=
\prod_{\ell=0}^{k-1}\left(1+\frac{2\ell}{m}\right)^{-1}.
\]
Then
$
M_m(a)=\sum_{k=0}^{\infty}\frac{\theta^k}{k!} \cdot w_{k,m}
$.
Let \(N\sim\mathrm{Poisson}(\theta)\) and note that, since
$\mathbb P(N=k)=e^{-\theta} \cdot \theta^k/k!$, we may rewrite the preceding display as
\[
M_m(a)=e^\theta \cdot  \mathbb E w_{N,m}.
\]
Now define
\[
w_{k,m}=e^{-S_{k,m}},
\qquad
S_{k,m}:=
\sum_{\ell=0}^{k-1}\log\left(1+\frac{2\ell}{m}\right).
\]
Using the bounds \(\log(1+x)\ge0\) and \(\log(1+x)\le x\) for \(x\ge0\), we have 
\[
0\le S_{k,m}
\le
\sum_{\ell=0}^{k-1}\frac{2\ell}{m}
=
\frac{k(k-1)}{m}.
\]
Consequently,
\[
0\le S_{N,m}\le \frac{N(N-1)}{m}.
\]
On the other hand, Jensen's inequality gives
\[
\log \mathbb E e^{-S_{N,m}}
\ge
-\mathbb E S_{N,m}.
\]
Combining the two bounds above with \(\mathbb E[N(N-1)]=\theta^2\), we obtain
\[
\frac{-\theta^2}{m}
=
\frac{-\mb E \left[ N(N-1) \right]}{m}
\leq
-\mb E S_{N,m} 
\leq
\log \mathbb E e^{-S_{N,m}}
\le
0.
\]
Consequently,
\[
\log M_m(a)
=
\theta
+
O\left(\frac{\theta^2}{m}\right)
=
\frac{a^2}{2m}
+
O\left(\frac{a^4}{m^3}\right).
\]
This completes the proof. $\hfill$ $\square$

The following Poisson approximation result will be used throughout the paper. It gives a total variation bound between a sum of Bernoulli random variables and a Poisson distribution with the same mean. The bound is expressed in terms of the dependency graph of the Bernoulli variables. Readers are referred to the paper \cite{arratia1989two} for a proof. 
\begin{lemma}[\cite{arratia1989two}]\label{lem:AGG}
Let \(\{I_\alpha\}_{\alpha\in\mathcal I}\) be Bernoulli variables with a
dependency graph: \(I_\alpha\) is independent of
\(\{I_\beta:\beta\notin B_\alpha\}\).  Let
\(W=\sum_{\alpha\in\mathcal I}I_\alpha\) and \(\lambda=\mb E W\).  Define
\[
 b_1:=\sum_{\alpha\in\mathcal I}\sum_{\beta\in B_\alpha}
     \mb E I_\alpha\,\mb E I_\beta,
 \qquad
 b_2:=\sum_{\alpha\in\mathcal I}\sum_{\substack{\beta\in B_\alpha\\\beta\ne\alpha}}
     \mb E(I_\alpha I_\beta).
\]
Then
\[
 d_{\mathrm{TV}}\lb \mathcal L(W),\operatorname{Poisson}(\lambda)\rb
 \le \min(1,\lambda^{-1})(b_1+b_2).
\]
\end{lemma}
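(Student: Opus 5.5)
We prove Lemma \ref{lem:AGG} by the Chen--Stein method. Fix $A\subset\mathbb Z_{\ge0}$ and let $g=g_{\lambda,A}$ be the solution of the Stein equation
\[
\lambda g(j+1)-j g(j)=\mathbf 1_{A}(j)-\operatorname{Poisson}(\lambda)(A),\qquad j\ge0 .
\]
Then $\mb P(W\in A)-\operatorname{Poisson}(\lambda)(A)=\mb E[\lambda g(W+1)-Wg(W)]$. We take as given the standard bounds on the Stein solution, which come from its explicit formula in terms of Poisson tail probabilities:
\[
\|g\|_\infty\le \min(1,\lambda^{-1/2}), \qquad \|\Delta g\|_\infty:=\sup_j|g(j+1)-g(j)|\le \min(1,\lambda^{-1})\le \lambda^{-1}(1-e^{-\lambda}).
\]
Only the bound on $\Delta g$ is needed.

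For each $\alpha$ we split $W=I_\alpha+V_\alpha+Y_\alpha$, where
\[
V_\alpha=\sum_{\beta\in B_\alpha\setminus\{\alpha\}}I_\beta,\qquad Y_\alpha=\sum_{\beta\notin B_\alpha}I_\beta .
\]
We may assume $\alpha\in B_\alpha$. Write $\pi_\alpha=\mb E I_\alpha$. Then
\[
\mb E[\lambda g(W+1)-Wg(W)]=\sum_\alpha\Big(\pi_\alpha\mb E g(W+1)-\mb E[I_\alpha g(W)]\Big).
\]
On $\{I_\alpha=1\}$ we have $g(W)=g(V_\alpha+Y_\alpha+1)$. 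We add and subtract $g(Y_\alpha+1)$ in both terms.

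First, $\mb E[I_\alpha g(Y_\alpha+1)]=\pi_\alpha\mb E g(Y_\alpha+1)$, because $I_\alpha$ is independent of $\{I_\beta:\beta\notin B_\alpha\}$. This is the only place the dependency-graph hypothesis enters, and it requires independence of $I_\alpha$ from the whole collection $\sigma(I_\beta:\beta\notin B_\alpha)$, which is exactly what is assumed. With this identity the $\alpha$-th summand becomes
\[
\pi_\alpha\mb E\big[g(W+1)-g(Y_\alpha+1)\big]-\mb E\big[I_\alpha\big(g(V_\alpha+Y_\alpha+1)-g(Y_\alpha+1)\big)\big].
\]

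Next we telescope each difference using $\|\Delta g\|_\infty$. Since $W+1-(Y_\alpha+1)=I_\alpha+V_\alpha$,
\[
|g(W+1)-g(Y_\alpha+1)|\le\|\Delta g\|_\infty\,(I_\alpha+V_\alpha),
\]
so the first term is at most
\[
\|\Delta g\|_\infty\,\pi_\alpha\sum_{\beta\in B_\alpha}\pi_\beta .
\]
Similarly, the second term is at most
\[
\|\Delta g\|_\infty\,\mb E[I_\alpha V_\alpha]=\|\Delta g\|_\infty\sum_{\beta\in B_\alpha,\ \beta\ne\alpha}\mb E(I_\alpha I_\beta).
\]

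Summing over $\alpha$ gives
\[
|\mb P(W\in A)-\operatorname{Poisson}(\lambda)(A)|\le\min(1,\lambda^{-1})(b_1+b_2)
\]
uniformly in $A$, which is the total variation bound.

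The only nontrivial input is the bound $\|\Delta g\|_\infty\le\min(1,\lambda^{-1})$. This is the step I expect to be the main obstacle. To prove it, I would first reduce to singleton sets $A=\{k\}$, using linearity of $g$ in $A$. For a singleton, $g$ has a single positive increment at $j=k$, with all other increments negative. That increment can be bounded by $\lambda^{-1}(1-e^{-\lambda})$ from the explicit Poisson-ratio formula for $g$, following Barbour--Eagleson as cited in \cite{arratia1989two}.
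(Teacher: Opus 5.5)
Your argument is correct and is exactly the Chen--Stein proof of \cite{arratia1989two}, which the paper cites rather than reproving; it is specialized to the case where the $b_3$ term vanishes because $I_\alpha$ is exactly independent of $\{I_\beta:\beta\notin B_\alpha\}$. Two small slips do not affect the conclusion. First, your chain for the Stein factor is reversed: the Barbour--Eagleson bound reads $\|\Delta g\|_\infty\le\lambda^{-1}(1-e^{-\lambda})\le\min(1,\lambda^{-1})$, since $1-e^{-\lambda}\le\min(1,\lambda)$. Second, $\alpha\in B_\alpha$ is not something you may assume but a convention built into the statement: without it, a degenerate $I_\alpha\equiv1$ with $B_\alpha=\emptyset$ would give $b_1=b_2=0$ while $d_{\mathrm{TV}}=1-e^{-1}$.
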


\begin{lemma} \label{lem:technical-minimum}
    There exists a universal constant $c>0$ such that 
    \[
    \inf_{\ve \in \left[ 0, \frac{1}{10} \right], q \geq 0} \la \lb 1 - 2\ve \rb \left[ \sqrt{2}q + \frac{q^2}{2\sqrt{2}} \right] + \frac 12  \lb 2-q-\ve \rb_+^2 \ra > \frac{3+c}{2}.
    \]
\end{lemma}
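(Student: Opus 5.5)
The plan is to reduce to a single explicit one-variable minimization. Write
\[
F(\ve,q):=(1-2\ve)\left[\sqrt2\,q+\frac{q^2}{2\sqrt2}\right]+\frac12(2-q-\ve)_+^2 .
\]
The first step is to observe that $F$ is nonincreasing in $\ve$ for each fixed $q\ge0$. The bracket is nonnegative, so $(1-2\ve)[\cdots]$ decreases in $\ve$. The map $\ve\mapsto(2-q-\ve)_+^2$ is also nonincreasing. Hence
\[
\inf_{\ve\in[0,1/10],\,q\ge0}F(\ve,q)=\inf_{q\ge0}F(1/10,q),
\]
and it suffices to bound the single function
\[
h(q):=0.8\left[\sqrt2\,q+\frac{q^2}{2\sqrt2}\right]+\frac12(1.9-q)_+^2
\]
from below by a constant strictly larger than $3/2$.

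The second step splits into the two ranges where the positive part is or is not active.

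\emph{Range $q\ge1.9$.} The quadratic penalty vanishes and the bracket is increasing in $q$, so
\[
h(q)\ge 0.8\left[1.9\sqrt2+\frac{1.9^2}{2\sqrt2}\right]\approx 3.17 .
\]

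\emph{Range $0\le q\le 1.9$.} Here $h$ is an honest quadratic:
\[
h(q)=1.805+\bigl(0.8\sqrt2-1.9\bigr)q+\left(\frac{0.8}{2\sqrt2}+\frac12\right)q^2 .
\]
Its leading coefficient is positive, approximately $0.783$, and its linear coefficient is approximately $-0.769$. Its unconstrained minimum is therefore
\[
1.805-\frac{(1.9-0.8\sqrt2)^2}{4\bigl(\tfrac{0.8}{2\sqrt2}+\tfrac12\bigr)}\approx 1.805-0.189=1.616 ,
\]
attained at $q\approx0.49$, which lies inside the interval. This is the smaller of the two range minima, so $\inf_q h\ge 1.61>3/2$.

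The conclusion is then $\inf F\ge 1.61=\frac{3+c}{2}$ with $c=0.22$, and any $c<0.23$ works.

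The only delicate point is not to bound the $\ve$-perturbation crudely. For instance, estimating $(2-q-\ve)_+^2\ge(2-q)_+^2-4\ve$ and $(1-2\ve)\ge 1$ minus a linear error loses more than the margin. The $\ve=0$ minimum is about $1.90$, since the minimum of $2+(\sqrt2-2)q+\frac{1+\sqrt2}{2\sqrt2}q^2$ is $\approx1.90$, and the crude bookkeeping costs roughly $1.05$. The monotonicity reduction to $\ve=1/10$ avoids this loss entirely. The remaining work is the elementary arithmetic above, which should be written with rational or explicit surd bounds so that the strict inequality is verified rigorously.
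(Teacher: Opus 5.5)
Your proof is correct and follows the paper's route: split according to whether $(2-q-\varepsilon)_+$ is active, then minimize the resulting quadratic. Your one addition, reducing to $\varepsilon=1/10$ via monotonicity of $F$ in $\varepsilon$, is worthwhile. It gives an explicit uniform value ($\inf\approx 1.616$, so any $c<0.23$ works), whereas the paper works at each fixed $\varepsilon$, asserts only ``$>3/2$,'' and leaves both the uniformity in $\varepsilon$ and the quadratic minimization unverified.
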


\noindent \textbf{Proof of Lemma \ref{lem:technical-minimum}.}
Fix $\ve \in [0,1/10]$. Consider the case $q \geq 2-\ve$. Then
\begin{align*}
    \inf_{q \geq 0} \la \lb 1 - 2\ve \rb \left[ \sqrt{2}q + \frac{q^2}{2\sqrt{2}} \right] + \frac 12  \lb 2-q-\ve \rb_+^2 \ra
    &= \inf_{q \geq 2-\ve} \la  \lb 1 - 2\ve \rb \left[ \sqrt{2}q + \frac{q^2}{2\sqrt{2}} \right]  \ra \\
    &\geq \lb 1 - 2\ve \rb \cdot \left[ \sqrt{2}q + \frac{q^2}{2\sqrt{2}} \right] \Bigg|_{q=2-\ve} > 3/2
\end{align*}
for all $\ve<=1/10$.

If $q \leq 2-\ve$, then
\begin{align*}
    & \inf_{q \geq 0} \la \lb 1 - 2\ve \rb \left[ \sqrt{2}q + \frac{q^2}{2\sqrt{2}} \right] + \frac 12  \lb 2-q-\ve \rb_+^2 \ra \\
    = & \inf_{0 \leq q \leq 2-\ve} \la  \lb 1 - 2\ve \rb \left[ \sqrt{2}q + \frac{q^2}{2\sqrt{2}} \right] + \frac 12  \lb 2-q-\ve \rb^2  \ra 
    > 3/2
\end{align*}
for all $\ve<=1/10$, where we omit a straightforward minimization of the quadratic function. $\hfill$ $\square
$

Recall that a function $F: \mb{R}^d \to \mb{R}$ is $\mu$-strongly convex if $F(\bm x) - \frac{\mu}{2} \cdot \| \bm x \|^2$ is a convex function. The following elementary bound is used repeatedly in the proofs. 
\begin{lemma} \label{lem:strong-convex}
    Suppose $G: \Omega \subset \mb{R}^d \to \mb{R}$ is a function such that there exists a nonnegative, $\mu$-strongly convex function $F: \mb R^d \to \mb R$ satisfying the following condition:
    \[
     G \lb \bm x \rb \geq \max \la  a, F(\bm x)  \ra, \qquad \text{for all $\bm x \in \Omega$}.
    \]
    Assume that this condition holds for some constant $a>0$. Then, for any $\lambda>0$ and $\ve \in (0,1)$,
    \[
    \int_{\Omega} \exp \left[ - \lambda \cdot G \lb \bm x \rb \right] \, d\bm x \leq \exp \left[ -\lambda(1-\ve)a \right] 
    \cdot \lb \frac{2\pi}{\lambda \ve \mu} \rb^{d/2}   .
    \]
\end{lemma}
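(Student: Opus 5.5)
The plan is to reduce the integral to a one-dimensional bound via a fiberwise Gaussian estimate, then integrate the remaining factor using only the lower bound $G\ge a$.

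Fix $\lambda>0$ and $\ve\in(0,1)$. On $\Omega$ we have $G\ge a$ and $G\ge F$, so any convex combination is also a lower bound:
\[
G(\bm x)\ \ge\ (1-\ve)\,a+\ve\,F(\bm x),\qquad \bm x\in\Omega .
\]
This gives
\[
\int_\Omega e^{-\lambda G(\bm x)}\,d\bm x \le e^{-\lambda(1-\ve)a}\int_{\mb R^d} e^{-\lambda\ve F(\bm x)}\,d\bm x,
\]
where we extend to all of $\mb R^d$ since the integrand is nonnegative. The claim then reduces to the Gaussian-type bound
\[
\int_{\mb R^d} e^{-s F}\,d\bm x\le \lb\frac{2\pi}{s\mu}\rb^{d/2},\qquad s=\lambda\ve .
\]

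For this, strong convexity gives a unique minimizer $\bm x_0$ of $F$. For any subgradient $\bm g\in\partial F(\bm x_0)$ we have
\[
F(\bm x)\ge F(\bm x_0)+\bm g^\top(\bm x-\bm x_0)+\tfrac{\mu}{2}\|\bm x-\bm x_0\|^2 .
\]
Since $\bm x_0$ is a minimizer we may take $\bm g=0$, and since $F$ is nonnegative, $F(\bm x_0)\ge0$. Hence $F(\bm x)\ge \frac{\mu}{2}\|\bm x-\bm x_0\|^2$. Integrating the Gaussian,
\[
\int e^{-s\mu\|\bm x-\bm x_0\|^2/2}\,d\bm x=(2\pi/(s\mu))^{d/2},
\]
which completes the bound.

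The only step that needs care is the existence of a minimizer and the quadratic lower bound at it. Strong convexity makes $F$ coercive: writing $F=\tilde F+\frac{\mu}{2}\|\cdot\|^2$ with $\tilde F$ convex and finite on $\mb R^d$, $\tilde F$ has an affine minorant, so $F\to\infty$. Continuity of finite convex functions then gives that the minimum is attained. The first-order inequality with $\bm g=0$ follows from the convexity of $\tilde F$ via the standard subgradient characterization. If one wants to avoid subgradients, the same conclusion follows from the midpoint inequality for strongly convex functions, $F(\frac{\bm x+\bm x_0}{2})\le \frac{F(\bm x)+F(\bm x_0)}{2}-\frac{\mu}{8}\|\bm x-\bm x_0\|^2$, combined with $F(\frac{\bm x+\bm x_0}{2})\ge F(\bm x_0)$. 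This gives $F(\bm x)\ge F(\bm x_0)+\frac{\mu}{4}\|\bm x-\bm x_0\|^2$, which changes only the constant, so the subgradient route is the one to use to get the stated constant exactly.
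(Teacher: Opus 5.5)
Your argument is correct and is essentially the paper's proof: you use the convex-combination bound $G\ge(1-\ve)a+\ve F$ on $\Omega$, extend the integral to $\mb R^d$, and apply the quadratic growth $F(\bm x)\ge\frac{\mu}{2}\|\bm x-\bm x_0\|^2$ at the minimizer (using $F\ge 0$) to reduce to a Gaussian integral. Your justification of the minimizer's existence and of the exact constant $\mu/2$ fills in details the paper asserts without proof; note only that your opening sentence about a ``fiberwise one-dimensional reduction'' does not describe the argument you actually give.
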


\noindent \textbf{Proof of Lemma \ref{lem:strong-convex}.}
Let $\bm x_{\rm min}$ be the unique global minimizer of $F$. By strong convexity and the nonnegativity of $F$, for all $\bm x \in \mb R^d$, we have 
$
F \lb \bm x \rb \geq \lb \mu /2 \rb \cdot \| \bm x - \bm x_{\rm min}  \|^2.
$

Note that $G(\bm x) \geq (1-\ve)a+\ve F(\bm x)$. Therefore, 
\[
    \int_{\Omega} \exp \left[ - \lambda \cdot G \lb \bm x \rb \right] \, d\bm x \leq   e^{-\lambda(1-\ve) a} \cdot \int_{\mb R^d} \exp \la  \frac{-\lambda \ve \mu}{2} \cdot \| \bm x - \bm x_{\rm min}  \|^2 \ra \, d\bm x = \lb \frac{2\pi}{\lambda \ve \mu} \rb^{d/2}  e^{-\lambda (1-\ve) a}. \qquad 
\]
$\hfill$ $\square$

\section{Proof of (\ref{power near 1})} \label{appendix: power near 1}
It suffices to show that $\mb P \lb W_n >0 \rb \to 1$ for a fixed $x \in \mb R$, where $W_n$ is as in \eqref{I-W}. Here we assume the regime 
\[
\rho_n = \frac{p-2\kappa_n}{2\sqrt{p L}} \to 1. 
\]
By the Paley--Zygmund inequality,
\[
\mb P \lb W_n>0 \rb \geq \frac{\lb \mb E W_n \rb^2}{\mb E W_n^2}.
\]
Now, with the indicators $I_{ij}$ defined in \eqref{I-W},
\[
\mb E W_n =  \binom{n}{2} \mb P \lb I_{12}=1 \rb
\]
and 
\[
\mb E W_n^2 \lesssim  \mb E W_n \lb  \mb E W_n +1  \rb + n^3  \mb P \lb I_{12}=1 , I_{13}=1\rb.
\]
Arguing as in the proof of \eqref{eq:critical-right-tail-goal}, we obtain $\mb E W_n \to \infty$ when $\rho_n \to 1$. Moreover, Step 2 of the proof of Theorem \ref{thm:Watson-critical} still yields $n^3  \mb P \lb I_{12}=1 , I_{13}=1\rb \to 0$. This completes the proof. $\hfill$ $\square$

\end{appendix}

\bibliographystyle{plainnat}
\bibliography{paper-ref}

\end{document}